\tikzset{>=stealth',
     cvertex/.style={circle,draw=black,inner sep=1pt,outer sep=3pt},
     vertex/.style={circle,fill=black,inner sep=1pt,outer sep=3pt},
     star/.style={circle,fill=yellow,inner sep=0.75pt,outer sep=0.75pt},
     tvertex/.style={inner sep=1pt,font=\criptsize},
     gap/.style={inner sep=0.5pt,fill=white}}
\newcommand{\ZZ}{\ensuremath{\mathbb{Z}}}
\newcommand{\A}{\ensuremath{\mathbb{A}}} %affiner Raum
\newcommand{\St}{\ensuremath{\mathcal{S}}} %mathcal R
\DeclareMathOperator{\cchar}{char}
\DeclareMathOperator{\Sing}{Sing}
\DeclareMathOperator{\Spec}{Spec}
\newcommand{\mc}[1]{\ensuremath{\mathcal{#1}}}   %mathcal
\newcommand{\cA}{\mathcal{A}}
\newcommand{\cF}{\mathcal{F}}
\newcommand{\cL}{\mathcal{L}}
\newcommand{\cX}{\mathcal{X}}
\newcommand{\x}{{\boldsymbol x}} 
\newcommand*\atwo{ \tikz[baseline=(char.base),->,>=stealth',shorten >=1pt,auto,node distance=1.2cm, thick,main node/.style={circle,draw}]{
		\node[shape=circle,draw,inner sep=2pt] (char) {1};
		\node[shape=circle,draw,inner sep=2pt,right of=char] (2) {2};
		\path
		(char)	edge (2);}}
\newcommand*\aTwo{ \tikz[baseline=(char.base),->,>=stealth',shorten >=1pt,auto,node distance=1.2cm, thick,main node/.style={circle,draw}]{
		\node[shape=circle,draw,inner sep=2pt] (char) {1};
		\node[shape=circle,draw,inner sep=2pt,right of=char] (2) {2};
		\path
		(2)	edge (char);}}
\newcommand{\hoehe}{\textcolor{white}{\bigg|}}
\newcommand{\Jac}{\operatorname{Jac}}
\newcommand{\car}{\operatorname{char}}
\newcommand{\change}[1]{{\color{black} #1}}
\newcounter{CountAlpha}
\theoremstyle{theorem}
\newtheorem{MainThm}[CountAlpha]{Theorem}  % fÃŒr Thm A, etc.
\newtheorem{MainCor}[CountAlpha]{Corollary}
\newtheorem{Thm}{Theorem}[section]         %f"ur Satz1, 2 ,3, etc.
\newtheorem{lemma}[Thm]{Lemma}
\newtheorem{corollary}[Thm]{Corollary}
\newtheorem{prop}[Thm]{Proposition}
\newtheorem{proposition}[Thm]{Proposition} 
\newtheorem{Qu}[Thm]{Question}
\theoremstyle{definition}
\newtheorem{defi}[Thm]{Definition} %neu
\newtheorem{example}[Thm]{Example}
\newtheorem{Bem}[Thm]{Remark}
\newtheorem*{Not}{Notation}
\title[Classification of singularities of cluster algebras of finite type]{Classification of singularities of cluster algebras of finite type: the case of trivial coefficients}
\author{Angelica Benito}
\address{Departamento de Did\'acticas Espec\'ificas,
	Facultad de Formaci\'on de Profesorado, 
	Universidad Aut\'onoma de Madrid,
	28049 Madrid, Spain}
\email{angelica.benito@uam.es} 
\author{Eleonore Faber}
\address{
School of Mathematics, University of Leeds, LS2 9JT Leeds, UK
}
\email{e.m.faber@leeds.ac.uk}
\author{Hussein Mourtada}
\address{Universit\'e de Paris, Sorbonne Universit{\'e},
	Institut Math\'ematique de Jussieu-Paris Rive Gauche,
	B\^atiment Sophie Germain, case 7012,
	75205 Paris Cedex 13, France}
\email{hussein.mourtada@imj-prg.fr}
\author{Bernd Schober}
\address{
Carl von Ossietzky Universit\"at Oldenburg,
Institut f\"ur Mathematik,
Ammerl\"ander Heerstra{\ss}e 114 - 118,
26129 Oldenburg (Oldb), Germany
}
\email{bernd.schober@uni-oldenburg.de}
\date{May 30, 2022}%\today}
\subjclass[2020]{13F60, 14B05, 14E15, 14J17} 
\keywords{cluster algebras, continuant polynomials, singularities, resolution of singularities}
\thanks{Competing interests: The authors declare none.}  %for Glasgow Mathematical Journal
\begin{document}

\maketitle

\begin{abstract}
	We provide a complete classification of the singularities of cluster algebras of finite type with trivial coefficients. 
	Alongside, we develop a constructive desingularization of these singularities via blowups in regular centers over fields of arbitrary characteristic.  
	Furthermore, from the same perspective, 
	we study a family of cluster algebras which are not of finite type
	and which arise from a star shaped quiver. 
\end{abstract}

\section{Introduction}

Cluster algebras were originally introduced by Fomin and Zelevinsky to study total positivity phenomena and Lusztig's canonical bases in Lie theory,
see e.g.~\cite{Fomin-ICM, Leclerc-ICM, LusztigCanonical}.
They quickly developed to a vibrant research area going far beyond its initial motivations, 
and with connections to many other areas,
such as 
algebraic geometry \cite{BFMN, BMRS2015,GHK,Nagao,MS16,Scott06},
commutative algebra \cite{GLS13,M13,M14},
combinatorics \cite{RS,FominReading}, 
representation theory of finite dimensional algebras and quivers \cite{MarshReinekeZelevinsky,BMRRT06,GLSInventiones,DWZ},
higher Teichm\"uller 
spaces
 \cite{FG,GSV},
or mirror symmetry \cite{GHKK,GS,KS}. 
For further connections and applications see e.g. the surveys \cite{KellerDerivedSurvey, WilliamsSurvey}.
\\
A {\em cluster algebra} is a subring of the field of rational functions in $ n $ variables over a base field $ K $%
\footnote{In fact, one could also work over more general bases, e.g. $ \ZZ $ instead of $ K $, see \cite[Section~5]{FZ2002}
		but we restrict our attention to fields.}.  
It is a commutative ring that is constructed  differently than most rings that usually are considered in commutative algebra: 
instead of generators and relations, one starts with a set of distinguished generators (the \emph{cluster variables}) and then iteratively constructs (via the process of \emph{mutation}) all other generators of the ring. 
In this article, we will mostly consider cluster algebras $\mathcal{A}(Q)$ that are constructed from a \emph{quiver} $Q$. 
We will also assume that $Q$ is totally mutable, that is, we assume trivial coefficients.
For the precise definitions and 
an outline of the more general construction via skew-symmetrizable matrices we refer to Section~\ref{sec:ClusterBasics}.
\\ 
Our main theme here is to investigate cluster algebras from the perspective of singularity theory, in particular, resolution of singularities. 
Our studies were motivated by an interesting coincidence in classifications: 
on the one hand, cluster algebras $\cA(Q)$ of finite type are classified by ADE-Dynkin diagrams \cite{FZ2003II}, 
whereas on the other hand the dual resolution graphs of the Kleinian surface singularities are classified by the same diagrams, see \cite{ArtinRational, BrieskornMathAnn1966, Lipman1969}, 
as well as simple hypersurface singularities in the sense of Arnold \cite{ArnoldNormalForms1972}. 
For an overview, see e.g., \cite{SlodowySLN815}. 
Thus we were guided by the following 

\begin{Qu} Let $\mc{A}$ be a cluster algebra. Which types of singularities can $\Spec(\mc{A})$ have? Can one classify these singularities for certain types of cluster algebras?
\end{Qu}

\begin{Qu} How can one describe resolutions of singularities of cluster algebras and do these resolutions take into account the combinatorial structure of the cluster algebras?
\end{Qu}

So far, there are only few results in this direction. 
In \cite{BMRS2015}, 
Benito, Muller, Rajchgot, and Smith proved that 
locally acyclic cluster algebras are strongly $ F $-regular 
(when defined over a field of prime characteristic)
and 
that they have at worst canonical singularities (over a field of characteristic $0$). 
Further, Muller, Rajchgot, and Zykoski \cite{MRZ2018} showed 
that the lower bound cluster algebra 
(which is an approximation of a given cluster algebra obtained by a suitable truncation of the construction process) 
is Cohen-Macaulay and normal.

In this paper we study \emph{cluster algebras of finite type}, which
can be classified in terms of Dynkin diagrams (as mentioned above, finite type cluster algebras from quivers are of type ADE, and more generally, all cluster algebras of finite type are classified by the crystallographic Coxeter groups  \cite{FZ2003II}). 
We provide a complete classification of their singularities and describe their embedded desingularization in the case of trivial coefficients.
Due to the combinatorial nature of cluster algebras,
the characteristic of the base field $ K $
does not play an essential role. 
%\\

\begin{Not}
	For a Dynkin diagram $ X_n \in \{ A_{n_1}, B_{n_2}, C_{n_3}, D_{n_4}, E_6, E_7, E_8, F_4, G_2 \mid n_i \geq i \} $,
	we denote by $ \cA(X_n) $ the corresponding cluster algebra with trivial coefficients.
		%Throughout the article, 
	%%\bs{The latter means that}
		%we assume that 
	%%all seeds are totally mutable,
		%which means that 
	%%\bs{i.e.,} it admits unlimited mutations in all directions
	%%\bs{(cf.~Remark~\ref{Rk:frozen})}. 
	%\\
	Note that the corresponding variety $ \Spec(\cA(X_n)) $ is a 
	different object to what is called a cluster variety.
	The latter will not play a role 
	%\bs{here}.
	in the present work.
\end{Not}

Let us briefly introduce notions in the context of simple singularities,
which we need to state our classification theorem. 
For the entire list of simple singularities in arbitrary characteristics, we refer to \cite[Definition~1.2]{GreuelKroening}.
Let $ K $ be a field of arbitrary characteristic. 
A formal power series $ f \in K[[x,y,z]] $ 
is of type $ A_m $,
for some $ m \in \ZZ_{\geq 1} $, if $ K[[x,y,z]]/ \langle f \rangle $ is isomorphic to 
$ K[[x,y,z]]/ \langle z^{m+1} + xy \rangle $.
Note that if $ K $ is algebraically closed and $ \car(K) \neq 2 $,
then we may perform a change in the variables such that $ z^{m+1} + xy = z^{m+1}+ \widetilde{x}^2 + \widetilde{y}^2 $. 
\\
Let $ n \in \ZZ $ with $ n \geq 3 $. 
A formal power series $ f \in K[[z, x_1, \ldots, x_n]] $ is 
of type $ A_1 $ if 
$ K[[z, x_1, \ldots, x_n]]/ \langle f \rangle $ is isomorphic to $ K[[z, x_1, \ldots, x_n]]/ \langle g \rangle $, where
\[
	g = \begin{cases}
	
	z^2 + x_1 x_2 + \cdots + x_{n-1} x_n
	& 
	\mbox{if } n \equiv 0 \mod 2,
	
	\\
	
	z x_1 + x_2 x_3 + \cdots + x_{n-1} x_n
	
	& 
	\mbox{if } n \equiv 1 \mod 2  .
	\end{cases}
\]
Let $ N > n \geq 2 $. 
We say that an $ n $-dimensional variety $ X \subset \A_K^N $
with an isolated singularity at a closed point $ x $ 
is locally isomorphic to an
isolated hypersurface singularity of type $ A_1 $
(resp.~ of type $ A_2 $ if $ n = 2 $ and $ N = 3 $),
if the completion of the local ring of $ X $ at $ x $ is isomorphic to
$ K[[z_0, x_1, \ldots, x_n]]/ \langle f \rangle $,
where $ f $ is a power series of type $ A_1 $ (resp.~of type $ A_2 $). 
If $ \dim(\Sing(X)) \geq 1 $,
we say that $ X $ is locally at some $ U \subseteq  \Sing(X) $ isomorphic to a cylinder over an isolated hypersurface singularity of type $ A_1 $ in $ \A_K^{m+1} $,
for some $ m < n $,
if locally at $ U $, 
$ X $ is isomorphic to a hypersurface in $ \A_K^{n+1} $ defined by $ f(z, x_1, \ldots, x_m) = 0 $,
where $ f \in K[z, x_1, \ldots, x_m ] $ is of type $ A_1 $. 
Furthermore, in the local situation, 
we say that a {regular} hypersurface $ V(h) $ is transversal to the cylinder,
if $( z, x_1, \ldots, x_m) $ do not appear in $ h $ after a suitable change in $ (x_{m+1}, \ldots, x_n) $. 
\\
In fact, for the cases which we consider, 
we do not need to pass to the completion
since we may construct a suitable change of variables
already after localizing.

Using the introduced notions, 
we can state our main result
on the classification of singularities of cluster algebras of finite type with trivial coefficients. 

{\samepage
\begin{MainThm}
	\label{Thm:A}
	Let $ K $ be a field of characteristic $ p \geq 0 $.
	\begin{enumerate}
		\item 
		$ \Spec(\cA(A_n) )$, $ n \geq 2 $, is singular 
		if and only if $ p \neq 2 $ and $ n \equiv 3 \mod 4 $, or if $ p = 2 $ and $ n \equiv 1 \mod 2 $. 
		In the singular case, $ \Spec( \cA(A_n) ) $ is locally isomorphic to an isolated hypersurface singularity of type $ A_1 $.
		
		\smallskip 
		
		\item 
		$ \Spec(\cA(B_n)) $, $ n \geq 2 $, is singular 
		if and only if $ p \neq 2 $ and $ n \equiv 3 \mod 4 $, or if $ p = 2 $.
		In the singular case, $ \Spec( \cA(B_n) ) $ is locally isomorphic to an isolated hypersurface singularity of type $ A_1 $.
		
		\smallskip  
		
		\item 
		$ \Spec(\cA(C_n)) $, $ n \geq 3 $, is singular 
		if and only if $ p = 2 $.
		In the singular case, we have 
		\[  
			\Sing(\Spec( \cA(C_n) )) \cong \Spec(\cA (A_{n-2})) . 
		\]
		\begin{enumerate}
			\item 
			If $ n \equiv 0 \mod 2 $, then $ \Sing(\Spec( \cA(C_n) ))  $ is {regular} and
			$ \Spec( \cA(C_n) ) $ is locally isomorphic to a 
			cylinder over an isolated hypersurface singularity of type $ A_1 $ in $ \A_K^3 $.  
			
			\item 
			If $ n \equiv 1 \mod 2 $ and $ n > 3 $, 
			then $ \Sing(\Spec( \cA(C_n) ))  $ has an isolated singularity of type $ A_1 $ at the origin
			and,
			locally at the origin,
			$ \Spec(\mathcal{A}(C_n)) $ is 
			isomorphic to a hypersurface of the form
			(where $ n = 2m +1 $)
			\[  
			\Spec ( k[x_1, \ldots, x_{2m}, y, z] / \langle \, yz + \big( \sum_{i=1}^{m} x_{2i-1} x_{2i} \big)^2  \ \rangle 
			\ , 
			\]
			while at a singular point different from the origin,
			$ \Spec(\mathcal{A}(C_n)) $ is locally isomorphic to a 
			cylinder over an isolated hypersurface singularity  of type $ A_1 $ in $ \A_K^3 $.
			
			\item
			If $ n = 3 $,
				then $ \Sing(\cA(C_n)) $ is isomorphic to two lines intersecting transversally at the origin. 
				All other statements of (2) remain true for $ m = 1 $.
		\end{enumerate}

		\smallskip 
		
		\item 
		\begin{enumerate}
			\item 
			$ \Spec(\cA(D_4)) $ is isomorphic to a subvariety of $ \A_K^6 $ and $ \Sing(\Spec(\cA(D_4))) $ consists of the 6 coordinate axes.
			At the origin, $ \Spec(\cA(D_4)) $ is locally isomorphic to the intersection of two hypersurface singularities of type $ A_1 $,
			while at a singular point different from the origin,
			$ \Spec(\mathcal{A}(D_4)) $ is locally isomorphic to a 
			cylinder over an isolated hypersurface singularity of type $ A_1 $ in $ \A_K^4 $ intersected with a {regular} hypersurface which is transversal to the cylinder.
			
			\item 
			If $ p \neq 2 $ and $ n \not\equiv 0 \mod 4 $
			or if $ p = 2 $ and $ n \equiv 1 \mod 2 $,
			then the singular locus of $ \Spec(\cA(D_n)) $
			has a single irreducible component $ Y_0 $, 
			which is {regular} and of dimension $ n - 3 $.
			Moreover, $ \Spec(\cA(D_n)) $ is locally at the singular locus isomorphic to a cylinder over a hypersurface singularity of type $ A_1 $ in $ \A_K^4 $.
			
			\item 
			Let $ n > 4 $.
			If $ p \neq 2 $ and $ n \equiv 0 \mod 4 $
			or if $ p = 2 $ and $ n \equiv 0 \mod 2 $,
			then $ \Sing(\Spec(\cA(D_n))) = Y_0 \cup \bigcup_{i=1}^4 Y_i $,
			where $ Y_i $ are isomorphic to coordinate axes, for $ i \geq 1 $,
			and $ Y_0 $ is irreducible, singular at the origin, and of dimension $ n - 3 $.
			At the origin, $ \Spec(\cA(D_n)) $ is locally isomorphic to the intersection of two hypersurface singularity of type $ A_1 $,
			while $ Y_0 $ is locally isomorphic to a hypersurface singularity of type $ A_1 $.
			Away from the origin, 
			the situation is analogous to the two $ D_n $-cases before. 
		\end{enumerate}
	
		\smallskip 
		
		\item 
		$ \Spec(\cA(E_7)) $ is singular if and only if $ p = 2 $. 
		In the singular case, $ \Sing (\Spec(\cA(E_7))) $ is a {regular} surface and locally at the singular locus, $ \Spec(\cA(E_7)) $ is
		isomorphic to a 
			cylinder over an isolated hypersurface singularity of type $ A_1 $ in $ \A_K^6 $ intersected with a {regular} hypersurface which is transversal to the cylinder.

		\smallskip
		
		\item 
		$ \Spec(\cA(G_2)) $ is singular if and only if $ p = 3 $. 
		In the singular case, $ \Spec( \cA(G_2) ) $ is locally isomorphic to an isolated hypersurface singularity of type $ A_2 $ in $ \A_K^3 $.
		
		\smallskip 
		
		\item 
		The varieties corresponding to the cluster algebras 
		$ \cA(E_6), \cA(E_8), $ and $ \cA(F_4) $ are {regular}.  
	\end{enumerate}
\end{MainThm}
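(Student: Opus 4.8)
The plan is to reduce the whole classification to a finite, explicit computation in commutative algebra, carried out type by type. Since $X_n$ is of finite type, $\mathcal{A}(X_n)$ has only finitely many cluster variables $u_1,\dots,u_N$, and these generate it as a $K$-algebra; writing $\mathcal{A}(X_n)=K[u_1,\dots,u_N]/I$ with $I$ the kernel of the surjection, $I$ is a prime ideal (the cluster algebra is a domain, being a subring of a rational function field), and it is generated by the exchange relations one collects while mutating out of an initial seed. So the first step is to produce, for each type, an explicit list of generators and exchange relations. For the simply-laced types this is done from a quiver; for $B_n,C_n,F_4,G_2$ one must instead mutate the skew-symmetrizable matrix, so the relations carry the extra multiplicities. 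To keep the relations tractable I would start each family from a linear (path) orientation, for which the mutated cluster variables are continuant polynomials in the initial variables; their two-term recursion $K_m=x_mK_{m-1}+K_{m-2}$ and factorization identities both produce the exchange relations in closed form and link the presentation of $X_n$ to that of $X_{n-1}$, thereby setting up an induction on the rank $n$.

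With an explicit presentation, for each infinite family $A_n,B_n,C_n,D_n$ I would locate the singular locus by the Jacobian criterion. The defining polynomials contain squares and products, so the Jacobian rank drops exactly where certain integer coefficients---coming from the continuant recursion---fail to be invertible in $K$; this is the precise source of the dependence on $\car(K)=p$. Vanishing of a single factor of $2$ gives the dichotomy ``$p=2$'' versus ``$p\neq2$'' in types $A,B,C,D$, the analogous factor of $3$ gives the $G_2$ condition $p=3$, and the residue conditions $n\equiv3\bmod4$ (type $A$) and $n\equiv0\bmod4$ (type $D$) arise from the period-four interaction between the two-step continuant recursion and the parity of $n$. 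The inductive structure reduces the rank-$n$ Jacobian computation to the rank-$(n-1)$ one together with a bounded amount of new data, and it also yields the identifications asserted for the singular loci---most notably $\Sing(\Spec(\mathcal{A}(C_n)))\cong\Spec(\mathcal{A}(A_{n-2}))$ and the splitting of $\Sing(\Spec(\mathcal{A}(D_n)))$ into the component $Y_0$ and the coordinate axes $Y_i$---by recognizing the zero locus of the relevant minors as a lower-rank cluster variety.

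The remaining task is to pin down the local analytic type at each point of the singular locus, and here is where the genuine work lies, because the embedding codimension $N-n$ is large and the variety is globally far from a hypersurface; the claim is that locally only a few relations matter. Concretely I would localize at the point in question: the cluster variables that do not vanish there become units, so the exchange relations in which they are the exchanged variable let me solve for and eliminate those variables one by one. After eliminating all but $n+1$ of the $u_i$ a single residual equation survives, and---crucially, as the statement already flags---this elimination and the subsequent change of coordinates can be performed after localization alone, so no passage to the completion is needed. Matching the residual equation against the normal forms $z^{m+1}+xy$ and $zx_1+x_2x_3+\cdots+x_{n-1}x_n$ identifies the type ($A_1$, or $A_2$ in the $G_2$ case). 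The same elimination exposes the finer structure in the statement: when an eliminated variable splits off as a free factor one sees a cylinder over an $A_1$-singularity; when two independent residual equations remain (at the origin for $D_n$, and along the singular surface for $E_7$) one sees the intersection of two $A_1$-singularities, respectively a cylinder over $A_1$ cut by a transversal regular hypersurface.

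For the exceptional types $E_6,E_7,E_8,F_4,G_2$ the number of cluster variables is fixed (though large), so their presentations are finite explicit data; here I would compute the exchange relations and the Jacobian directly, exploiting the symmetry of the exchange graph to cut down the bookkeeping, and then either verify regularity everywhere ($E_6,E_8,F_4$) or locate and classify the singular locus by the same localization-and-elimination procedure (the regular singular surface of $E_7$ when $p=2$, and the isolated $A_2$-point of $G_2$ when $p=3$). I expect the main obstacle to be twofold. The first difficulty is obtaining the continuant-based presentation in a form uniform across all characteristics at once, so that a single induction on $n$ delivers every congruence condition rather than forcing a separate argument in each residue class and each characteristic. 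The second, and deeper, difficulty is the local elimination: one must prove that, despite the high codimension, the localized ideal becomes principal after a suitable localized change of coordinates, generated by exactly one equation of the asserted type---this principalization is the precise reason the singularities turn out to be as mild as $A_1$ (or $A_2$, or the stated cylinders and intersections), and establishing it rigorously for every stratum of every type is where most of the effort will go.
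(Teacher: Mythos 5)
Your proposal follows essentially the same route as the paper: choose an acyclic (path/tree) orientation for each type, present the cluster algebra by the exchange relations of that seed, express the eliminated variables as continuant polynomials and exploit their recursion, locate the singular loci via the Jacobian criterion (with the characteristic entering exactly through the invertibility of the integer constants $\pm 2$, $\pm 3$ and the mod-$4$ behaviour of the quadratic part of the continuants), and finally identify the local normal forms by localization and elimination of variables, without passing to completions. The one point that needs repair is your foundational claim: you assert that, taking \emph{all} finitely many cluster variables $u_1,\dots,u_N$ as generators, the ideal $I$ of relations is generated by the exchange relations collected while mutating. That statement is neither justified in your sketch nor needed, and analogous generation statements are known to fail in general (the paper points to \cite[subsection~1.2]{MRZ2018} for lower bound algebras of non-acyclic quivers). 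The paper instead invokes the Berenstein--Fomin--Zelevinsky results (Lemma~\ref{L:BFZ_Cor1.17} and Theorem~\ref{Thm:clus=low}): for a totally mutable \emph{acyclic} seed the cluster algebra coincides with its lower bound cluster algebra, so it is presented by the $2n$ generators $x_1,\dots,x_n,x_1',\dots,x_n'$ with the $n$ initial exchange relations forming a Gr\"obner basis; this is precisely why the path orientation is chosen. Since your plan restricts to path orientations anyway, the fix is simply to replace the full-cluster-variable presentation by this $2n$-generator lower-bound presentation -- which is what your subsequent elimination steps implicitly operate on -- rather than to prove the stronger, unsubstantiated presentation claim. (A minor further caveat: the paper's continuants satisfy $P_n=y_1P_{n-1}-P_{n-2}$, with a minus sign, since the off-diagonal entries are $-1$; your recursion $K_m=x_mK_{m-1}+K_{m-2}$ is the classical variant, and the sign matters for the mod-$4$ bookkeeping that produces the congruence conditions.)
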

}
\newpage

Cluster algebras of finite type arise in applications very often with
non-trivial coefficients.
The presence of frozen variables (i.e., directions in which one cannot mutate) can affect the existence and type of singularities. 
Therefore an interesting question would be to extend the above classification in the case of non-trivial coefficients. 
This is the subject of further future studies.

Part (1) of Theorem~\ref{Thm:A} has previously been proven for $ p \neq 2 $ in \cite[Proposition~A.1]{MRZ2018}.
Note that the statement in loc.~cit.~is characteristic free, 
but the special case $ \car(K) = 2 $ has been overseen. 
\\
We note that from our classification follows that there is no obvious direct link between the singularities of cluster algebras of finite types and the rational double point singularities. 
For example, for cluster algebras of type ADE, only hypersurface singularities of type $A_1$ (cluster algebras of type $A$) or more complicated configurations (cluster algebras of type $D$) appear.

As a consequence of Theorem~\ref{Thm:A}, we can construct an
embedded resolution of singularities for cluster algebras of finite type.

\begin{MainCor}
	\label{Cor:B}
	Let $ K $ be any field
	and let $ X := \Spec (\cA ) $, 
	where  $\cA $ is any cluster algebra of finite type. 
	There exists a finite sequence $ \pi $ 
	of blowups in {regular} centers such that the strict transform of $ X $ along $ \pi $ is {regular} and it has simple normal crossings with the exceptional divisors.  
\end{MainCor}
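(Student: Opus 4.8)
The plan is to exploit the explicit local normal forms furnished by Theorem~\ref{Thm:A}: every singularity of $X = \Spec(\cA)$ is of type $A_1$, of type $A_2$, a cylinder over such, or a controlled configuration of these, so I would resolve each normal form by blowing up a regular center read off from the singular locus, and then verify the simple normal crossings (SNC) condition. The first reduction is to a single connected Dynkin diagram. A finite-type cluster algebra with trivial coefficients is a tensor product over $K$ of the cluster algebras attached to the connected components of its quiver, since mutations in distinct components are independent; hence $X$ is a product of the factors $\Spec(\cA(X_n))$. A resolution of one factor lifts to the product by blowing up centers of the form $Z \times \prod_{j \neq i}\Spec(\cA(X_{n_j}))$ (the base change of a blowup of that factor), and exceptional divisors coming from different factors are automatically transversal, so that the lifted sequences combine into an SNC resolution. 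It therefore suffices to treat each connected Dynkin type.

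Consider next the cases with isolated singularities. For $E_6$, $E_8$, $F_4$ nothing is required. For $A_n$ and $B_n$ in their singular ranges Theorem~\ref{Thm:A} gives an isolated $A_1$ point with quadric normal form $z^2 + x_1 x_2 + \cdots + x_{n-1}x_n$ or $z x_1 + x_2 x_3 + \cdots + x_{n-1}x_n$; these are ordinary double points, and a single blowup of the singular point yields a regular strict transform meeting the exceptional divisor, a smooth projective quadric, transversally, which is the desired SNC situation. For $G_2$ with $p = 3$ the $A_2$ normal form $z^3 + xy$ is resolved by the classical sequence of two blowups of the successive singular points, with transversality checked at each stage.

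For the positive-dimensional singular loci occurring in $C_n$, $D_n$ and $E_7$, Theorem~\ref{Thm:A} asserts that, away from finitely many special points, $X$ is locally a cylinder over an isolated $A_1$ singularity, possibly intersected with a transversal regular hypersurface. There the reduced singular locus (the component $Y_0$, or the coordinate axes $Y_i$) is regular, and I would blow it up: fibrewise this is exactly the resolution of a single $A_1$, the transversal factor is preserved, and since the center is regular the strict transform meets the new exceptional divisor in SNC position.

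The delicate step is the behaviour at the special points, and this is where I expect the main obstacle. At the origin of $\Spec(\cA(C_n))$ with $n = 2m+1$ the local equation is $yz + \big(\sum_{i=1}^m x_{2i-1}x_{2i}\big)^2$, whose reduced singular locus is itself isomorphic to $\Spec(\cA(A_{n-2}))$, hence again singular; in characteristic $2$ the square renders the equation non-reduced along this locus, so one must either first blow up the reduced center $\{y = z = \sum_{i} x_{2i-1}x_{2i} = 0\}$ to drop the multiplicity or blow up $\{y = z = 0\}$ while tracking the order of vanishing carefully. Similarly, at the origin of $\Spec(\cA(D_n))$ the variety is an intersection of two $A_1$ hypersurfaces while $\Sing(X)$ has several coordinate axes meeting there, so the centers must be chosen to separate these branches before the node-type singularities can be removed. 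In all these situations the successive centers are again varieties of the already-understood types (notably $\Spec(\cA(A_k))$ for smaller $k$), so the procedure terminates by induction on dimension and on the $A$-index. The genuine difficulty is the bookkeeping: one must certify not merely that the strict transform becomes regular, but that at the end it is in SNC position with the \emph{entire} accumulated exceptional divisor, including at the points where several exceptional components and the strict transform come together. I would settle this by computing in the explicit affine charts supplied by Theorem~\ref{Thm:A} and checking transversality of the relevant tangent spaces chart by chart.
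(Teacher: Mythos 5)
Your overall architecture---case-by-case resolution via the normal forms of Theorem~\ref{Thm:A}, together with the (correct, and worth making explicit) reduction of a disconnected diagram to its connected factors---is the same as the paper's, but your treatment of the one genuinely hard case breaks down, and it is precisely the case you flag as delicate. For $C_n$ with $n=2m+1$ odd and $\car(K)=2$, the local equation is $f = yz + g^2$ with $g = \sum_{i=1}^m x_{2i-1}x_{2i}$, and the reduced singular locus $V(y,z,g)$ is, as you yourself note, singular at the origin. Your first option---blowing up this locus---therefore uses a non-regular center, which is exactly what the corollary forbids; for the same reason you cannot run an ``induction on the $A$-index'' whose successive centers are varieties of type $\Spec(\cA(A_k))$, since those are themselves among the singular varieties in question. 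Your second option---blowing up the regular center $V(y,z)$---fails outright in characteristic $2$: since $f \notin \langle y,z\rangle$ (indeed $f$ is a unit at the generic point of the center), nothing gets factored out, and in the chart $z = yz'$ the strict transform is $y^2z' + g^2$. In characteristic $2$ the derivatives with respect to $y$ and to every $x_j$ vanish identically, so the whole locus $V(y,g)$ is singular on the transform; the singular locus thus grows from dimension $n-2$ to dimension $n-1$, and you are worse off than before.

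The missing idea, which is how the paper proceeds (Proposition~\ref{Prop:Cn_p2}(2)), is to blow up the origin \emph{first}. The origin is a regular center, and this blowup simultaneously resolves the $A_1$ singularity of the singular locus itself; the strict transform of the singular locus is then regular and can legitimately serve as the second center, after which one further blowup (of the strict transform of an exceptional component created by the first blowup) finishes the resolution. The same ordering is what settles your vaguely described $D_n$ bottleneck: when $Y_0$ is singular (Proposition~\ref{Prop:TypeRSDn}(2)), the paper blows up the origin first and only then the strict transform of $Y_0 \cup Y_1 \cup \cdots \cup Y_4$, whose components the point blowup has separated and individually resolved. With this ``resolve the center before using it'' step inserted, your chart-by-chart transversality verification would go through; without it, the proposal does not produce a sequence of blowups in regular centers and so does not prove the statement.
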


In order to prove Theorem~\ref{Thm:A},
we choose first a suitable presentation of the cluster algebra $ \cA(X_n) $,
which arises from an acyclic seed.
The latter has the benefit that the cluster algebra can be described as a quotient of a polynomial ring in $ 2n $ variables over $ K $ 
by an ideal generated by $ n $ relations determined by the initial seed, 
the {\em exchange relations}.
\\
We determine the singular locus by applying
Zariski's criterion for regularity \cite[Theorem~11, p.~39]{Zar47}.
	The latter is a variant of
	the Jacobian criterion for smoothness
	\cite[\S 2.2]{Cut2004},
	where derivatives with respect to a fixed $ p $-basis of the base field $ K $ have to be taken into account in the Jacobian matrix.
	Since the coefficients appearing in the exchange relations are contained in $ \ZZ $,
	we do not have to consider a $ p $-basis of $ K $.
	In particular, $ K $ can be any field and is not necessarily perfect.
\\
 In general, it is not very pleasant to handle the maximal minors of a matrix of size $ n \times 2n $. 
Via subtle eliminations of variables, 
we deduce from the mentioned presentation a new one, 
which is better suited for our task.
In particular, the number of generators in the resulting set diminishes to at most three and often only one. 
From this, we can then detect and classify the singularities of the corresponding variety and thus of $ \Spec ( \cA(X_n) ) $. 
\\
A key ingredients in our studies are continuant polynomials,
as they naturally appear in the elimination process. 
Therefore, as a preparation for Theorem~\ref{Thm:A},
we examine them from a perspective of singularity theory in Section~\ref{S:continuants}.

Furthermore, we also take a look beyond cluster algebras of finite type. 
More precisely, we investigate the singularities 
of a class of cluster algebras 
which arise from a star shaped quiver $ \St_n $, where $ n \geq 2 $:
\begin{center} 
	\begin{tikzpicture}[->,>=stealth',shorten >=1pt,auto,node distance=2cm, thick,main node/.style={circle,draw,white}]

	\node[main node] (n) {1 1};
	\node[main node] (1) [above right of=n] {1 1};
	\node[main node] (2) [right of=n] {1 1};
	\node[main node] (3) [below right of=n] {1 1};
	\node[main node] (4) [below of=n] {1 1};
	\node[main node] (n-2) [above left of=n] {1 1};
	\node[main node] (n-1) [above  of=n] {1 1};

	\node at (n) {\small $n$};
	\node at (1) [above right of=n] {\small $ 1 $};
	\node at (2) [right of=n] {\small $2$};
	\node at (3) [below right of=n] {\small $3$};
	\node at (4) [below of=n] {\small $4$};
	\node at (n-2) [above left of=n] {{\small $n-2$}};
	\node at (n-1) [above  of=n] {{\small $n-1$}};

	\draw (n) circle (13.5pt);
	\draw (1) circle (13.5pt);
	\draw (2) circle (13.5pt);
	\draw (3) circle (13.5pt);
	\draw (4) circle (13.5pt);
	\draw (n-2) circle (13.5pt);
	\draw (n-1) circle (13.5pt);

	\path

	(3) edge (n)
	(1) edge (n)
	(2) edge (n)
	(4) edge (n)
	(n-2) edge (n)
	(n) edge (n-1);

	\path[dashed]
	(4) edge[bend left=50,-] (n-2);
	
	\end{tikzpicture}
\end{center} 

Observe that the case $ n \leq 4 $ has already been treated in Theorem~\ref{Thm:A} 
since the corresponding quivers are coming from the Dynkin diagrams
$ A_2 $, $ A_3 $, and $ D_4 $, respectively.

\begin{MainThm}
	\label{Thm:C}
	Let $ K $ be any field and $ n \geq 4 $.
	Let $ \cA(\St_n) $ be the cluster algebra over $ K $ arising form the star shaped quiver $ \St_n $. 
	The singular locus $ \Sing ( \Spec(\mc{A}(\St_n))) $ has 
	$ (n-1)(n-2)2^{n-4} $ irreducible components, 
	where each of them is {regular} and of dimension $ n - 3 $.	
	Locally at a generic point of such a component, 
	$ \Spec (\cA(\St_n) ) $ is isomorphic to an $ A_1 $-hypersurface singularity.
	On the other hand, locally at the closed point determined by the intersection of all these components, 
	$ \change{ \Spec(\cA(\St_n)) } $ is isomorphic to a toric variety,
	defined by the binomial ideal
	\[
		\langle x_1 x_2 - x_{2k-1} x_{2k} \mid k \in \{ 2, \ldots, n - 1 \} \rangle \subset K [x_1, \ldots, x_{2n-2}]_{ \langle x_1, \ldots, x_{2n-2} \rangle }.
	\]
	The singularities of $ \Spec(\cA(\St_n)) $ are resolved by first separating the irreducible components of its singular locus and then blowing up their strict transforms.
\end{MainThm}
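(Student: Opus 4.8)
The plan is to combine the acyclic presentation of $\cA(\St_n)$ with a chain of eliminations that reduces the entire computation to a single toric model, from which the count of components, the local analytic type, and the resolution can all be read off.

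First I would write down the presentation coming from the star seed. As $\St_n$ is acyclic, $\cA(\St_n)\cong K[x_1,\ldots,x_n,x_1',\ldots,x_n']/I$, where $I$ is generated by the $n$ exchange relations: mutation at a leaf $i\in\{1,\ldots,n-1\}$ gives $x_ix_i'-(x_n+1)$, and mutation at the centre $n$ gives $x_nx_n'-(x_1\cdots x_{n-2}+x_{n-1})$. Thus $X:=\Spec(\cA(\St_n))$ is a complete intersection of dimension $n$ in $\A_K^{2n}$, and I may apply the Jacobian criterion. The substitution $x_n=x_1x_1'-1$ eliminates $x_n$ and turns the leaf relations into $x_ix_i'=x_1x_1'$ for $i=2,\ldots,n-1$ and the central relation into $(x_1x_1'-1)x_n'=x_1\cdots x_{n-2}+x_{n-1}$. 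The decisive point is that $X$ is regular along the closed locus $\{x_n=0\}$: there every $x_ix_i'=1$, so all $x_i$ are units, and the $n\times n$ minor of the Jacobian on the columns $x_1',\ldots,x_{n-2}',x_{n-1},x_{n-1}'$ equals $\pm\,x_1\cdots x_{n-1}\neq0$. Hence $\Sing(X)\subseteq\{x_n\neq0\}$. On this chart $x_1x_1'-1=x_n$ is a unit, so the central relation solves for $x_n'$; eliminating it identifies $X$, away from $\{x_n=0\}$, with the toric variety
\[
  V=\{\,x_1x_2=x_3x_4=\cdots=x_{2n-3}x_{2n-2}\,\}\subset\A_K^{2n-2}
\]
after renaming $(x_i,x_i')\mapsto(x_{2i-1},x_{2i})$. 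Since $\Sing(V)\subseteq\{x_1x_2=0\}$ lies inside the chart corresponding to $x_n\neq0$, this gives a global isomorphism $\Sing(X)\cong\Sing(V)$ and, at the common point of all components, identifies the local model of $X$ with the stated binomial ideal.

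The combinatorial core is the analysis of $V$, which I would treat as the $(n-1)$-fold fibre product over $\A_K^1$ of the multiplication map $q\colon\A_K^2\to\A_K^1$, $q(a,b)=ab$, whose unique critical point is the origin. A rank computation for the bidiagonal Jacobian (equivalently, the smoothness criterion for fibre products over a smooth curve) shows that $V$ is singular at a point precisely when at least two of its $n-1$ factors sit at the origin. Stratifying accordingly produces the components: choose the two factors that vanish identically ($\binom{n-1}{2}$ ways) and, for each of the remaining $n-3$ factors, choose which of its two coordinates vanishes ($2^{n-3}$ ways). Each choice is a coordinate subspace, hence regular, of dimension $n-3$; distinct choices give distinct subspaces of the same dimension, so these are exactly the irreducible components, numbering $\binom{n-1}{2}2^{n-3}=(n-1)(n-2)2^{n-4}$. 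At a generic point of a component exactly two factors are critical, so using the $n-3$ submersive factors as parameters shows $X$ is locally a cylinder over the three-dimensional node $\{a_kb_k=a_lb_l\}$, i.e.\ transversally an $A_1$-hypersurface singularity. As every component is a coordinate subspace containing the origin and each coordinate is killed by some component, their common intersection is the origin, where $X$ is the toric variety $V$.

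For the resolution I would use that, locally at the origin, $X$ is the toric model $V$, whose singular strata are nested coordinate subspaces meeting at the torus-fixed point. I first separate the components of $\Sing(X)$ by a sequence of blowups in regular (toric) centres, starting from the origin and proceeding through the deeper intersection strata, so that the strict transforms of the components become regular and pairwise disjoint. Each such strict transform is then a regular centre along which $X$ is transversally the node $\{a_kb_k=a_lb_l\}$, and a single blowup of it resolves that node; the composite is an embedded resolution with simple normal crossings, as in Corollary~\ref{Cor:B}. The principal obstacle I anticipate is the global control of the singular locus—verifying that nothing is lost on $\{x_n=0\}$ and that the elimination is an isomorphism on the complementary chart, so that $\Sing(X)\cong\Sing(V)$ holds globally rather than only formally—together with organizing the separating blowups at the origin, where all components collide in the toric configuration, so that one blowup per component then suffices.
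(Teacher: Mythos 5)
Your proposal is correct and takes a genuinely different route from the paper's. The paper (Lemma~\ref{L:Sn}) eliminates to a global presentation in $\A_K^{2n-2}$ whose first equation carries a non-binomial unit factor, proves an auxiliary lemma (Lemma~\ref{L:auxSing}) for hypersurfaces $x_1x_2(1+\rho)+y_1y_2$, and rules out extra singular components by an induction on $n$ that blows up the origin and inspects every chart. You instead dispose of the locus $\{x_n=0\}$ at the outset (your minor $\pm x_1\cdots x_{n-1}$ is correct, since $x_ix_i'=1$ there makes each $x_i$ a unit), then eliminate $x_n,x_n'$ on the complement, identifying $X\cap\{x_n\neq0\}$ with the open subset $\{x_1x_2\neq1\}$ of the honest toric fibre product $V$; as $\Sing(V)\subseteq\{x_1x_2=0\}$, this yields $\Sing(X)\cong\Sing(V)$ globally. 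The fibre-product Jacobian analysis then delivers, with no induction, the component count $\binom{n-1}{2}\,2^{n-3}$, their regularity and dimension, the transversal $A_1$-type at generic points, and the binomial local model at the deepest point --- which the paper only obtains after a further unit-absorbing substitution. What your route forgoes is the paper's chart-by-chart induction, which is also the engine of its resolution argument: there, each chart of the blowup of the origin reproduces the $\St_{n-1}$ configuration, giving simultaneously the separation of components and the absence of new singularities. Your resolution step states the same strategy but leaves precisely this verification open (as you acknowledge yourself); to finish, one must check --- by such an induction, or by a toric/arrangement argument on $V$ --- that the separating blowups create no new singularities and keep the strict transform transversally a node along each separated component, so that one final blowup per component indeed resolves.
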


The appearing integer sequence $ ( (n-1)(n-2)2^{n-4} )_{n \geq 4 } $
can be found in the {\em The On-Line Encyclopedia of Integer Sequences}, 
\cite[Sequence A001788]{OEIS}.
In Remark~\ref{R:Starcomp}, we explain the connection to one of the descriptions given in loc.~cit.

\medskip

Let us give a brief summary of the contents: 
In Section~\ref{sec:ClusterBasics}, 
we recall basic notions and results on cluster algebras.
In particular, we address the classification of finite type via Dynkin diagrams.
After that we study the singularities of continuant polynomials in Section~\ref{S:continuants}, 
as they play an essential role in our investigations.
Then, we show Theorem~\ref{Thm:A} and Corollary~\ref{Cor:B} by studying case-by-case the cluster algebras $ \cA(X_n) $ of different Dynkin types
\change{in Sections~\ref{sec:Quiver} (quiver case) and~\ref{sec:Matrix} (non-quiver case).}
%\out{where also distinguish whether $ \cA(X_n) $ is arising from a quiver (Section~\ref{sec:Quiver})
%or a skew-symmetrizable matrix (Section~\ref{sec:Matrix}).} 
We end with the proof of Theorem~\ref{Thm:C} in Section~\ref{sec:star}.

{\em Acknowledgments:}
The authors want to thank Bernhard Keller for comments on an earlier version of this manuscript. \change{We also thank the anonymous referee for helpful comments to improve the exposition.} This project was initiated during a three weeks stay at the Mathematisches Forschungsinstitut Oberwolfach in the context of an Oberwolfach Research Fellowship in 2020.
All authors are grateful to the Institute for their support and hospitality during their visit. 

\bigskip

\section{Cluster Algebras basics}
\label{sec:ClusterBasics}

Since we do not require that the reader is familiar with the theory of cluster algebras, 
we first briefly recall the basics on cluster algebras associated to quivers 
and the necessary notions to deal with all cluster algebras of finite type. 
That is, we also outline the more general theory using skew-symmetrizable matrices. 
However, for most of the paper, we will be dealing with cluster algebras associated to quivers, 
so we provide a more detailed exposition for this case. 
For more details on the general theory, we refer the reader to literature,
\cite{FZ2002,FZ2003II,FWZ2016,FWZ2017,FWZ2020}.

\smallskip 

A {\em quiver} $ Q $
is a finite directed graph.
So, $ Q = (Q_0, Q_1) $  is a 
pair of two finite sets, 
where $ Q_0 = \{ 1, \ldots, n \} $ is the set of vertices and 
$ Q_1 $ is the set of arrows between the vertices. 
An element of $ Q_1 $ can be identified with a pair $ (i,j) $
with $ i,j \in Q_0 $,
where the corresponding arrow goes from $ i $ to $ j $;
we also write $ i \to j $. 
Note that multiple arrows are allowed between {two} vertices. Additionally, we always assume:

\begin{itemize}
	\item 
	$ Q $ does not contain any loops,
	i.e., $ (i,i) \notin Q_1 $ for all $ i \in Q_0 $. 
	
	\item 
	There are no oriented $ 2 $-cycles in $ Q $,
	i.e., if $ (i,j) \in Q_1 $, then $ (j,i) \notin Q_1 $. 
\end{itemize}

For example, the pictures of the quivers $ Q = (\{1,2,3\}, \{ (1,2)_1, (1,2)_2, (3,2) \}) $
and $ Q' := (\{ 1, 2, 3, 4 \}, \{ (1,2), (3,2), (4,2) \} )  $ are:

\begin{center} 
	\begin{tikzpicture}[->,>=stealth',shorten >=1pt,auto,node distance=2cm, thick,main node/.style={circle,draw}]
	
	\node[main node] (1) {$1$};
	\node[main node] (2) [right of=1] {$2$};
	\node[main node] (3) [right of=2] {$ 3 $};
	
	\path
	([yshift=-3pt,xshift=0pt]1.east) edge  ([xshift=0pt,yshift=-3pt]2.west)
	([yshift=3pt,xshift=0pt]1.east) edge  ([xshift=0pt,yshift=3pt]2.west)
	(3) edge (2);

	\node[main node] (1a) [right of=3] {$1$};
	\node[main node] (2a) [right of=1a] {$2$};
	\node[main node] (3a) [above right of=2a] {$ 3 $};
	\node[main node] (4a) [below right of=2a] {$ 4 $};
	
	\path
	(1a) edge (2a) 
	(3a) edge (2a)
	(4a) edge (2a);
	\end{tikzpicture}
	\\[-0.75cm]
	$ Q $ 
	\hspace{111pt}
	$ Q' $
	\\[0.75cm]
\end{center}

(In the set of arrows for $ Q $, we wrote $ (1,2)_\alpha $, for $ \alpha  \in \{ 1, 2 \} $, in order to indicate that there are two different arrows from $ 1 \to 2 $ appearing in $ Q $.)

\begin{defi}
	Let $ Q = ( Q_0, Q_1 ) $ be a quiver
	and $ k \in Q_0 $ be a vertex.
	The {\em quiver mutation $ \mu_k $} (in direction $ k $) 
	transforms $ Q $ into a new quiver $ Q' = \mu_k (Q) $, 
	which is obtained in the following way:
	\begin{enumerate}[leftmargin=*,label=(\arabic*)]
		\item 
		for every directed path $ i \to k \to j $ in $ Q $,
		we add a new arrow $ i \to j $;
		
		\item 
		we reverse the arrows incident to the vertex $ k $;
		
		\item 
		we remove oriented $ 2 $-cycles until there is none left. 
	\end{enumerate}
	
	Two quivers $ Q^{(1)} $ and $ Q^{(2)} $ are called {\em mutation-equivalent},
	if there exists a sequence of mutations transforming $ Q^{(1)} $ into a quiver $ Q' $, which is isomorphic to $ Q^{(2)} $
	(i.e., there exists a bijection $ f \colon Q'_0 \to Q^{(2)}_0 $ between the set of vertices such that $ (i,j) \in Q'_1 $ if and only if $ (f(i), f(j)) \in Q^{(2)}_1 $). 
	If this is the case, we write $ Q^{(1)} \sim Q^{(2)} $.
\end{defi}

Let us illustrate the mutation procedure for an example.
Here, we mutate at the vertex $ k = 1  $. 
\begin{center} 
	\begin{tikzpicture}[->,>=stealth',shorten >=1pt,auto,node distance=5cm, thick]
	
	\node (A) {
		\raisebox{1.2cm}{$ Q = $
		}
		\begin{tikzpicture}[->,>=stealth',shorten >=1pt,auto,node distance=2cm, thick,main node/.style={circle,draw},mutate node/.style={circle,fill=berndGruen,draw},frozen/.style={draw}]
		
		\node[main node] (k) {$1$};
		\node[main node] (i) [right of=k] {$2$};
		\node[main node] (j) [below of=k] {$3$};
		\node[main node] (l) [below of=i] {$4$};

		\path 
		(i) edge (k)
		(l) edge (i)
		(k) edge (l)
		([xshift=-3pt]j.north) edge ([xshift=-3pt]k.south)
		([xshift=3pt]j.north) edge ([xshift=3pt]k.south);
		\end{tikzpicture}
	};
	
	\node (B) [right of=A] {
		\begin{tikzpicture}[->,>=stealth',shorten >=1pt,auto,node distance=2cm, thick,main node/.style={circle,draw},mutate node/.style={circle,fill=berndGruen,draw},frozen/.style={draw}]
		
		\node[main node] (k) {$1$};
		\node[main node] (i) [right of=k] {$2$};
		\node[main node] (j) [below of=k] {$3$};
		\node[main node] (l) [below of=i] {$4$};

		\path 
		(i) edge (k)
		(k) edge (l)
		([xshift=-3pt]j.north) edge ([xshift=-3pt]k.south)
		([xshift=3pt]j.north) edge ([xshift=3pt]k.south)
		([xshift=-2pt]l.north) edge ([xshift=-2pt]i.south)
		([xshift=3pt]i.south) edge ([xshift=3pt]l.north)
		([yshift=-3pt]j.east) edge ([yshift=-3pt]l.west)
		([yshift=3pt]j.east) edge ([yshift=3pt]l.west)
		;
		\end{tikzpicture}
	};

	\node (C) [right of=B]{
		\begin{tikzpicture}[->,>=stealth',shorten >=1pt,auto,node distance=2cm, thick,main node/.style={circle,draw},mutate node/.style={circle,fill=berndGruen,draw},frozen/.style={draw}]
		
		\node[main node] (k) {$1$};
		\node[main node] (i) [right of=k] {$2$};
		\node[main node] (j) [below of=k] {$3$};
		\node[main node] (l) [below of=i] {$4$};

		\path 
		(k) edge (i)
		(l) edge (k)
		([xshift=-3pt]k.south) edge ([xshift=-3pt]j.north)
		([xshift=3pt]k.south) edge ([xshift=3pt]j.north)
		([xshift=-2pt]l.north) edge ([xshift=-2pt]i.south)
		([xshift=3pt]i.south) edge ([xshift=3pt]l.north)
		([yshift=-3pt]j.east) edge ([yshift=-3pt]l.west)
		([yshift=3pt]j.east) edge ([yshift=3pt]l.west)
		;
		\end{tikzpicture}
	};

	\node (D) [below right of=A] {
		\raisebox{1.2cm}{$ \mu_1(Q) = $
		}
		
		\begin{tikzpicture}[->,>=stealth',shorten >=1pt,auto,node distance=2cm, thick,main node/.style={circle,draw},mutate node/.style={circle,fill=berndGruen,draw},frozen/.style={draw}]
		
		\node[main node] (k) {$1$};
		\node[main node] (i) [right of=k] {$2$};
		\node[main node] (j) [below of=k] {$3$};
		\node[main node] (l) [below of=i] {$4$};

		\path 
		(k) edge (i)
		(l) edge (k)
		([xshift=-3pt]k.south) edge ([xshift=-3pt]j.north)
		([xshift=3pt]k.south) edge ([xshift=3pt]j.north)
		([yshift=-3pt]j.east) edge ([yshift=-3pt]l.west)
		([yshift=3pt]j.east) edge ([yshift=3pt]l.west)
		;
		\end{tikzpicture}
	}; 
	
	\path 
	(A) edge node {$(1)$} (B)
	(B) edge node {$(2)$} (C) 
	(C.south) edge node {$(3)$} (D.east) ;
\end{tikzpicture}
\end{center}

\begin{Bem}
	\label{Rk:frozen}
In general, one subdivides the set of vertices into two disjoint sets: 
the mutable vertices, for which we are allowed to perform a mutation,
and frozen vertices, which cannot be mutated,
see \cite[Section~2.1]{FWZ2016}. 
In this paper, we only deal with quivers where all vertices are mutable, so we will not go into details of frozen variables.
\end{Bem}

From now on, we fix a field
$ K $ and a field $ \cF $, which is isomorphic to the field of rational functions over $ K $ in $ n $ variables. 

\begin{defi}
A {\em labeled seed of geometric type} in $ \cF $ is a pair $ (\x, Q ) $,
where
\begin{itemize}
	\item 
	$ \x = (x_1, \ldots, x_n) $ is a $ n $-tuple of algebraically independent elements and such that $ \cF \cong K(x_1, \ldots, x_n) $;
	
	\item 
	$ Q $ is a quiver with $ n $ vertices, which neither contains loops nor $ 2 $-cycles. 
\end{itemize}
The $ n $-tuple $ \x $ is called the {\em cluster} of the seed 
and $ x_1, \ldots, x_n $ are the {\em cluster variables}.
The number $ n $ of vertices is called the {\em rank} of the seed. 
\end{defi}

Since all seeds appearing in this article are labeled seeds of geometric type, 
we simply speak of {\em seeds} in $ \cF $.

The mutation of a quiver extends in the following way to a seed. 

\begin{defi}
Let $ ( \x , Q ) $ be a seed in $ \cF $ and let $ k \in Q_0 = \{ 1, \ldots, n \}  $. 
The {\em seed mutation} $ \mu_k $ (in direction $ k $)
transforms $ ( \x, Q ) $ into a new seed $ \mu_k ( \x , Q ) = (\x', Q') $,
which is obtained in the following way:
\begin{itemize}
	\item 
	$ Q' = \mu_k ( Q) $;
	
	\item 
	$ \x' = (x_1', \ldots, x_n') $,
	where $ x_j' = x_j $ for $ j \neq k $ and 
	$ x_k' \in \cF $ is the element determined by the 
	{\em exchange relation}
	\begin{equation}
	\label{eq:ex_rel}
	x_k x_k' 
	= \prod_{i \rightarrow k } x_i + \prod_{i \leftarrow k } x_i 
	\end{equation}
\end{itemize}

Two seeds $ (\x^{(1)}, Q^{(1)}) $ and $ (\x^{(2)}, Q^{(2)}) $ are called {\em mutation-equivalent},
if there exists a sequence of mutations transforming one seed into the other
(up to permutation of the cluster variables, which also induces an isomorphism of quivers). 
If this is the case, we write $ (\x^{(1)}, Q^{(1)}) \sim (\x^{(2)}, Q^{(2)}) $.  
\end{defi}

Note that $ \mu_k $ is an involution, i.e., $ \mu_k (\mu_k (\x, Q)) = (\x, Q) $.
On the other hand, 
there exist examples for which $ \mu_k (\mu_\ell (\x, Q)) \neq \mu_\ell (\mu_k (\x, Q)) $, where $ \ell \neq k $.
For example, one can verify that $ \mu_3 ( \mu_1 (Q)) \neq \mu_1 (\mu_3 (Q)) $ in the example given above.

\begin{defi} \label{def:clusteralgebra}
Let $ (\x, Q) $ be a seed in $ \cF $. 
We set
\[
\cX := \cX (\x, Q) := \bigcup_{ (\x',Q') \sim (\x, Q) } \x'.
\]
The {\em cluster algebra $ \cA := \cA(\x, Q) $} (of geometric type, over $ K $) determined by the seed $ (\x, Q) $
is defined as the sub-$ K $-algebra of $ \cF $ generated by all cluster variables, 
\[
\cA(\x, Q) := K [\cX] .
\] 
\end{defi}

\begin{Bem}[{cf.~\cite[Section~3.1]{FWZ2016}}] 
	\label{Bem:Mat}
The data of $Q$ can be encoded in an $n \times n$ integer matrix $ B = B(Q) $ with entries
{$ b_{i,j} $, which are equal to the number of arrows $ i \to j $ in $ Q $ and
where an arrow $ j \to i $ is counted with negative sign for $ b_{i,j} $,
i.e.,}
\[ 
	b_{ij} := \# \{ \text{ arrows } (i,j) \in Q_1\} - \#\{ \text{ arrows } (j,i) \in Q_1 \} \ . 
\]	
Then $B$ is called the \emph{exchange matrix}. 
Note that $B(Q)$ is skew-symmetric and that $B$ determines $Q$, 
so that sometimes the notion $(\x, B)$ for the seed $(\x,Q)$ is used. 
Moreover, mutation $\mu_k(Q)$ can also be defined on the matrix $B$, where the mutation
{$ B' := \mu_k (B) $} of $B$ {in direction $ k $} is given by
\begin{equation} \label{eq:matrixmutation}
b'_{ij} : =\begin{cases} -b_{ij} & \text{ if } i=k \text{ or } j=k , \\
b_{ij}+\frac{1}{2}(|b_{ik}|b_{kj} +b_{ik}|b_{kj}|) & \text{ otherwise. }
\end{cases} 
\end{equation}

More generally, {the notion of a seed $ ( \x, B ) $ and its corresponding cluster algebra $ \cA(\x,B) $ can be extended to the following setting:
	
\begin{itemize}
	\item 
	$ B := (b_{i,j})_{i,j \in \{ 1, \ldots, n \}} $ a 
	\emph{skew-symmetrizable} integer matrix,
	i.e., there exists a diagonal matrix $ D $ 
	with integer entries such that $ D B $ is skew-symmetric,
	
	\item 
	where the mutation rule $ \mu_k(B) $ is given by \eqref{eq:matrixmutation}, and 
	
	\item 
	the exchange relations \eqref{eq:ex_rel} become
	\begin{equation}
	\label{exchangeskewsymmetrizable}
		{x_k x_k'
			=
			\prod_{\stackrel{j=1}{b_{j,k} >0}}^n x_j^{b_{j,k}} + \prod_{\stackrel{j=1}{b_{j,k} <0}}^n x_j^{-b_{j,k}}}
		\ .
	\end{equation}   
\end{itemize}
}

{Let us point out that the sign pattern of a skew-symmetrizable matrix is skew-symmetric.
	The sign pattern of a such a matrix can be encoded in terms of a quiver.
	More precisely, to a skew-symmetrizable matrix $ B $,}
{one associates the quiver $\Gamma(B)$ in the following way: 
	if $b_{i,j}>0$, then we have an arrow} 
 	{$ i \to j $}.
 	{Note that if $B$ is skew-symmetric, then $B=B(Q)$ and $\Gamma(B)$ is the quiver $Q$ with multiple arrows collapsed into $1$.} 
\end{Bem}

\smallskip

\begin{example}
Let us discuss the first non-trivial example. 
Consider the seed $ (\x, Q) $,
where $ \x = (x_1, x_2 ) $ and 
$ Q $ is the quiver with two vertices and one arrow between them,  
\begin{center} 
	\begin{tikzpicture}[->,>=stealth',shorten >=1pt,auto,node distance=2cm, thick,main node/.style={circle,draw}]
	
	\node[main node] (1) {$1$};
	\node[main node] (2) [right of=1] {$2$};
	
	\path
	(1) edge (2);
	\end{tikzpicture}.
\end{center}
The following table describes the behavior of $ (\x, Q ) $ along repeated mutation:
\[
\begin{array}{|r|c|c|}
\hline 
\mbox{quiver} \hoehe \hspace{1.5cm} & \mbox{cluster} & \mbox{expression in terms of initial cluster $ \x = (x_1, x_2) $}
\\
\hline \hline 
Q = \atwo 
\hoehe 
& 
(x_1, x_2) 
& -
\\
\hline 
\mu_1 (Q) = \aTwo 
\hoehe 
& 
(x_1^{(1)}, x_2^{(1)})
&
x_1^{(1)} = \dfrac{1+ x_2}{x_1},
\hspace{15pt}
x_2^{(1)} = x_2
\\
\hline 
\mu_2 (Q) = \aTwo 
\hoehe 
& 
(x_1^{(2)}, x_2^{(2)})
&
x_1^{(2)} = x_1, 
\hspace{15pt} 
x_2^{(2)} = \dfrac{x_1 + 1}{x_2}
\\
\hline 
\mu_1(\mu_1 (Q)) = \atwo 
\hoehe 
& 
(x_1^{(11)}, x_2^{(11)})
&
x_1^{(11)} = x_1, 
\hspace{15pt} 
x_2^{(11)} = x_2
\\
\hline 
\mu_2(\mu_2 (Q)) = \atwo 
\hoehe 
& 
(x_1^{(22)}, x_2^{(22)})
&
x_1^{(22)} = x_1, 
\hspace{15pt} 
x_2^{(22)} = x_2
\\
\hline 
\mu_1(\mu_2 (Q)) = \atwo 
\hoehe 
& 
(x_1^{(12)}, x_2^{(12)})
&
x_1^{(12)} = \dfrac{1+x_1+x_2}{x_1 x_2}, 
\hspace{15pt} 
x_2^{(12)} = \dfrac{x_1 + 1}{x_2}
\\
\hline 
\mu_2(\mu_1 (Q)) = \atwo 
\hoehe 
& 
(x_1^{(21)}, x_2^{(21)})
&
x_1^{(21)} = \dfrac{1 + x_2}{x_1}, 
\hspace{15pt} 
x_2^{(21)} = \dfrac{1+x_1+x_2}{x_1 x_2}
\\ 
\hline 
\change{\mu_2(\mu_1(\mu_2 (Q))) = \aTwo} 
\hoehe 
& 
\change{(x_1^{(212)}, x_2^{(212)})}
&
\change{x_1^{(212)} = \dfrac{1+x_1+x_2}{x_1 x_2}, %= x_2^{(21)},
\hspace{15pt} 
x_2^{(212)} = \dfrac{1 + x_2}{x_1}} % = x_1^{(21)}}
\\ 
\hline 
\end{array} 
\] 
\change{Notice that $ x_1^{(212)} = x_2^{(21)} $ and $ x_2^{(212)} = x_1^{(21)} $.}
Therefore, we have 
\[
\cA:= 
\cA (\x, Q) 
= K \left[x_1, x_2, \frac{1+x_1}{x_2}, \frac{1+x_2}{x_1}, \frac{1+x_1+ x_2}{x_1 x_2} \right]
=
K \left[x_1, \frac{1+x_1}{x_2}, \frac{1+x_2}{x_1} \right]
\cong 
\]\[	\cong
K[u,v,w]/ \langle uvw - u - v - 1 \rangle
,  
\] 
where the second equality holds since
\[ 
x_2 = \dfrac{x_2 + 1}{x_1} \cdot x_1 - 1   
\ \ 
\mbox{ and } 
\ \ 
\dfrac{1+x_1+ x_2}{x_1 x_2} = \dfrac{x_2 + 1}{x_1} \cdot \dfrac{x_1 + 1}{x_2} - 1 .
\]
Observe that the singular locus of $ \Spec (\cA) $ is empty. 
\end{example}

In the example above, all cluster variables can be expressed as Laurent polynomials in the initial cluster variables $ x_1, x_2 $.
Indeed, this is always true by \cite[Theorem~3.1]{FZ2002}.
In our context this can be stated as follows: 

\begin{Thm}[Laurent {phenomenon}]
Let $ (\x, Q) $ be a seed in $ \cF $.  
Every cluster variable can be expressed as a Laurent polynomial with integer coefficients in $ \x $. 
\end{Thm}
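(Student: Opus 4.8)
The plan is to prove the statement by induction on the length of a mutation sequence, exploiting the unique factorization structure of the Laurent ring. Write $\mathcal{L} := \ZZ[x_1^{\pm 1}, \ldots, x_n^{\pm 1}]$ for the ring of Laurent polynomials with integer coefficients in the initial cluster $\x = (x_1, \ldots, x_n)$; this is a localization of a polynomial ring over $\ZZ$ and hence a unique factorization domain. Since every cluster variable arises from $\x$ by a finite sequence of seed mutations, it suffices to show that for every seed $(\x', Q')$ mutation-equivalent to $(\x, Q)$, each entry of $\x'$ lies in $\mathcal{L}$. The base case is immediate, and a single mutation $x_k \mapsto x_k' = \big(\prod_{i \to k} x_i + \prod_{i \leftarrow k} x_i\big)/x_k$ manifestly produces an element of $\mathcal{L}$, so the entire content of the theorem lies in controlling iterated mutations.

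First I would isolate the genuine difficulty. After several mutations a cluster variable $y$ is obtained from an exchange relation whose denominator is a previously produced cluster variable $z$, and $z$ is itself only known to lie in $\mathcal{L}$. Thus one must show that the binomial numerator appearing in the exchange relation for $y$ is divisible in $\mathcal{L}$ by $z$, that is, that the division is \emph{exact}. This exactness is precisely where the UFD structure enters: the essential input is that two exchange partners $z$ and $z'$, related by $z z' = (\text{binomial})$, are coprime in $\mathcal{L}$, because in the cluster in which $z$ is a variable their product is a sum of two monomials in the \emph{remaining} variables and hence shares no irreducible factor with $z$.

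To organize the bookkeeping I would follow the strategy of Fomin--Zelevinsky and reduce the full tree of mutations to a local configuration involving only a few consecutive seeds, the \emph{caterpillar}. Concretely, one fixes the target cluster variable and the two mutation directions $j, k$ active near it, and proves a divisibility statement by comparing the Laurent expansions of the relevant variables across three neighbouring clusters. Because $\mu_k$ is an involution and alters only the $k$-th variable, one can propagate the Laurent property along the spine of the caterpillar provided the required coprimalities hold at each step; these are verified by checking, for each prime factor of a denominator, that it cannot also divide the corresponding numerator. Integrality of the coefficients then comes along for free, since all exchange relations have coefficients in $\ZZ$ and every exact division in $\mathcal{L}$ preserves $\ZZ$-coefficients.

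The step I expect to be the main obstacle is establishing the divisibility (exactness) claim inside the caterpillar reduction, namely that the binomial numerators really are divisible by the intervening cluster variables. This is not a formal consequence of the mutation rules: it rests on the coprimality of exchange partners in the UFD $\mathcal{L}$ together with an induction whose hypotheses must be kept strong enough to feed back into themselves at every vertex of the caterpillar. Everything else, including the base case, the reduction to the local three-seed picture, and the tracking of integer coefficients, is comparatively routine.
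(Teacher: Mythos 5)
First, note what you are being compared against: the paper gives no proof of this theorem at all — it is stated as a quotation of \cite[Theorem~3.1]{FZ2002} — so the relevant benchmark is Fomin--Zelevinsky's original argument. Your outline is in fact a reconstruction of exactly that argument: induction on mutation sequences, reduction to the caterpillar configuration, and coprimality/divisibility in the UFD $\ZZ[x_1^{\pm 1},\ldots,x_n^{\pm 1}]$. So the route you chose is the right one, and the same one the paper's citation points to.

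However, as a proof the proposal has a genuine gap, which you yourself flag: the divisibility (exactness) claim inside the caterpillar induction, together with the coprimality lemmas that feed it, \emph{is} the entire mathematical content of the theorem, and it is deferred rather than carried out. Moreover, the one justification you do offer does not hold up as written. You argue that exchange partners $z$ and $z'$ are coprime in $\ZZ[x_1^{\pm 1},\ldots,x_n^{\pm 1}]$ ``because in the cluster in which $z$ is a variable their product is a sum of two monomials in the remaining variables.'' In the Laurent ring of \emph{that} cluster, $z$ is a unit, so any assertion about common irreducible factors is vacuous there; and in the Laurent ring of the \emph{initial} cluster, $z$ and $z'$ are complicated Laurent polynomials, so the binomial shape of $zz'$ in another coordinate system gives no direct control on their factorizations. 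Bridging precisely this — transporting gcd statements, formulated for the exchange polynomials in suitable polynomial rings, along mutations back to the initial cluster — is what Fomin--Zelevinsky's caterpillar lemma accomplishes, and it requires an induction hypothesis that simultaneously asserts Laurentness and specific pairwise coprimalities across three consecutive seeds, plus an explicit verification of those gcd conditions for binomial exchange relations. Until those lemmas are stated and proven, your argument does not close; what you have is a faithful plan of the known proof, not a proof.
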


It can be quite tedious to determine all seeds mutation-equivalent to a given initial seed $ ( \x, Q) $. 
A useful tool for determining mutation equivalent quivers and related invariants is the  {\tt Java} applet \cite{Keller-Java-app}. \\
Sometimes these calculations can be avoided by considering the lower cluster algebra, 
which can be easily determined and which coincides with the cluster algebra in many interesting cases, see Theorem~\ref{Thm:clus=low} below. 
Note that \change{the} following results 
({Definition}~\ref{Def:lowerC}, Lemma \ref{L:BFZ_Cor1.17}, {Theorem}~\ref{Thm:clus=low}) 
also hold in the skew-symmetrizable case, i.e., for $\cA(\x,B)$ where $B$ is skew-symmetrizable.

\begin{defi} \label{Def:lowerC}
Let $ (\x, Q) $ be a seed in $ \cF $. 
The {\em lower bound cluster algebra $ \cL(\x, Q) $} of $ (\x,Q) $ is defined as 
\[
\cL(\x,Q)
:=K [x_1, \ldots, x_n, x_1', \ldots, x_n'],
\]
where $ x_1', \ldots, x_n' $ are the elements that we obtain by the exchange relation~\eqref{eq:ex_rel} after mutating $ Q $ once in direction $ 1, \ldots, n $, respectively. 
\end{defi}

We immediately see that the inclusion
\[  
\cL(\x,Q) \subseteq \cA(\x,Q) 
\] 
holds
and whenever we have equality, 
then it is easy to provide a set of generators for $ \cA(\x,Q)  $. 

Let $ J $ be the ideal of relations among the generators of $ \cL(\x,Q) $.
Clearly, the exchange relations provide the elements 
$ x_k x_k' - \prod_{i \rightarrow k } x_i - \prod_{i \leftarrow k } x_i  \in J $, for $ k \in \{ 1, \ldots, n \} $. 
In general, it may happen that these are not all relations between the generators, see \cite[subsection~1.2]{MRZ2018}.
Nonetheless, the following useful result holds for acyclic quivers. 
Recall that a quiver is called {\em acyclic}, if it does not contain an oriented cycle. 
In the case $(\x,B)$ we say that $\cA(\x,B)$ is \emph{acyclic} if $\Gamma(B)$ is an acyclic quiver.

\begin{lemma}[cf.~{\cite[Corollary 1.17]{BFZ2005}}]
	\label{L:BFZ_Cor1.17}
	If $ Q $ is acyclic,
	then the exchange relations ~\eqref{eq:ex_rel}
	generate the ideal $ J$  of relations among the generators of $ \cL(\x,Q) $.
	Moreover, the polynomials $ x_k x_k' - \prod_{i \rightarrow k } x_i - \prod_{i \leftarrow k } x_i  \in J $, for $ k \in \{ 1, \ldots, n \} $, form a Gr\"obner basis for $ J $ 
	with respect to any term order
	for which $ x_1', \ldots, x_n' $ are much larger than $ x_1, \ldots, x_n $. 	
\end{lemma}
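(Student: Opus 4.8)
The plan is to prove both assertions at once by exhibiting the exchange relations as a Gröbner basis and then checking that the quotient they cut out already injects into $\cF$. Write $R := K[x_1,\dots,x_n,x_1',\dots,x_n']$ for the polynomial ring in the $2n$ indeterminates (temporarily regarding the generators of $\cL(\x,Q)$ as free variables), and let $\psi\colon R \to \cF$ be the $K$-algebra homomorphism sending each indeterminate to the corresponding element of $\cL(\x,Q)\subseteq\cF$, so that $J=\ker\psi$. For $k\in\{1,\dots,n\}$ set $P_k := x_kx_k' - \prod_{i\to k}x_i - \prod_{i\leftarrow k}x_i$ and $I := \langle P_1,\dots,P_n\rangle$. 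By the definition of $x_k'$ via \eqref{eq:ex_rel} we have $\psi(P_k)=0$, hence $I\subseteq J$; the whole content of the lemma is the reverse inclusion together with the Gröbner basis property, and I would establish the latter first since it feeds into the former.

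For the Gröbner basis claim, fix any term order in which every primed variable outweighs every monomial in the unprimed variables. Because $\prod_{i\to k}x_i$ and $\prod_{i\leftarrow k}x_i$ involve only unprimed variables (the neighbours of $k$, and no loops occur), the leading term of $P_k$ is exactly $x_kx_k'$. For $k\neq\ell$ the leading terms $x_kx_k'$ and $x_\ell x_\ell'$ involve four pairwise distinct indeterminates, so they are coprime; by Buchberger's coprimality criterion every $S$-polynomial $S(P_k,P_\ell)$ reduces to zero modulo $\{P_1,\dots,P_n\}$. Hence $\{P_1,\dots,P_n\}$ is a Gröbner basis of $I$, the initial ideal is $\langle x_1x_1',\dots,x_nx_n'\rangle$, and the \emph{standard monomials} — those divisible by no $x_kx_k'$, equivalently the monomials in which $x_k$ and $x_k'$ never occur simultaneously — form a $K$-basis of $R/I$. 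Reducing an arbitrary monomial modulo this Gröbner basis shows, in particular, that the images of the standard monomials span $\cL(\x,Q)$.

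It remains to prove $I=J$, and here acyclicity is indispensable. Since $I\subseteq J$ induces a surjection $R/I \twoheadrightarrow R/J\cong\cL(\x,Q)$ and the standard monomials are a $K$-basis of $R/I$, it suffices to show that their images $\psi(m)\in\cF$ are $K$-linearly independent; then $\bar\psi\colon R/I\to\cF$ is injective, so its kernel $J/I$ vanishes, i.e.\ $I=J$. To prove independence I would exploit that an acyclic $Q$ admits a topological ordering of its vertices, and choose a generic real weight vector $w=(w_1,\dots,w_n)$ compatible with it (e.g.\ $w_i=\lambda^i$ for $\lambda\gg0$), extended to a valuation $\nu$ on Laurent monomials in $x_1,\dots,x_n$ by $\nu(\prod x_i^{c_i})=\sum c_iw_i$. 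For generic $w$ each $x_k'=(\prod_{i\to k}x_i+\prod_{i\leftarrow k}x_i)/x_k$ has a unique $\nu$-leading Laurent monomial $\mu_k$, so every standard monomial $m=\prod_i x_i^{a_i}\prod_j (x_j')^{b_j}$ acquires a single leading Laurent monomial $\mathrm{in}_\nu(\psi(m))=\prod_i x_i^{a_i}\prod_j\mu_j^{b_j}$. The crux — and the step I expect to be the main obstacle — is to verify that the map $m\mapsto \mathrm{in}_\nu(\psi(m))$ is injective on standard monomials; this should follow from a triangularity argument reading off the exponents in order of decreasing vertex index, using that $\prod_{i\to k}x_i$ and $\prod_{i\leftarrow k}x_i$ involve only vertices below, respectively above, $k$ in the topological order. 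Granting injectivity, a standard leading-term argument (in a vanishing $K$-combination the unique $\nu$-maximal Laurent monomial cannot cancel) yields the desired linear independence, completing the proof.
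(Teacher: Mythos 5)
Your overall architecture is sound, and it is essentially the route taken in the literature: the paper itself gives no proof of this lemma but cites \cite[Corollary~1.17]{BFZ2005}, where the Gr\"obner basis statement is deduced, exactly as you do, from the $K$-linear independence in $\cF$ of the standard monomials; that independence is \cite[Theorem~1.16]{BFZ2005} and is the genuinely hard part of the result. Your first two steps are complete and correct: Buchberger's coprimality criterion does show that $P_1,\dots,P_n$ form a Gr\"obner basis of the ideal $I$ they generate, with initial ideal $\langle x_1x_1',\dots,x_nx_n'\rangle$, and the equality $I=J$ does reduce to the $K$-linear independence of the images of the standard monomials.

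The gap is exactly at the step you flagged, and it is genuine: with your choice of weights the injectivity claim is \emph{false}. Take $Q$ of type $A_2$ with arrow $1\to 2$, so $x_1'=(1+x_2)/x_1$ and $x_2'=(1+x_1)/x_2$. For \emph{any} weights with $w_1,w_2>0$ (in particular $w_i=\lambda^i$) one has $\mathrm{in}_\nu(x_1')=x_2x_1^{-1}$ and $\mathrm{in}_\nu(x_2')=x_1x_2^{-1}$, hence $\mathrm{in}_\nu(\psi(x_1'x_2'))=1=\mathrm{in}_\nu(\psi(1))$: two distinct standard monomials with the same leading monomial, and no genericity or tie-breaking can repair this, since the collision occurs for every positive weight vector. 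Such collisions are systematic (in $A_3$ one also gets $\mathrm{in}_\nu(\psi(x_2'x_3'))=1$), and they explain why your triangularity heuristic cannot run: for $w_i=\lambda^i$ the leading term of the numerator of $x_k'$ is $\prod_{k \to i}x_i$ (supported \emph{above} $k$) when $k$ is not a sink, but $\prod_{i\to k}x_i$ (supported \emph{below} $k$) when $k$ is a sink, so neither reading order is triangular. The repair is to reverse the optimization: take the \emph{lowest}-weight term (equivalently, weights $w_i=-\lambda^i$ with $\lambda\gg 0$). Then for every non-sink $k$ the selected term of the numerator of $x_k'$ is $\prod_{i\to k}x_i$, supported on vertices $<k$, while for every sink it is the constant term; consequently the lowest term of $\psi(m)$ determines $(a_n,b_n)$, then $(a_{n-1},b_{n-1})$, and so on in decreasing vertex order, and your concluding no-cancellation argument goes through (after refining $\nu$ to a total monomial order, a routine point). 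One last caveat: if $Q$ has an isolated vertex $k$ and $\car(K)=2$, then $x_k'=2/x_k=0$ and the lemma itself fails ($x_k'\in J\setminus I$); so any field-coefficient proof, yours included, must implicitly exclude this case, which never occurs in the paper's applications.
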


{In particular, the dimension of the corresponding variety 
$ \Spec (\cL(\x,Q)) $ is $ n $ if $ Q $ is acyclic.}

\begin{Bem}
		In the skew-symmetrizable case, when $\Gamma(B)$ is acyclic, the exchange relations \eqref{exchangeskewsymmetrizable} generate the ideal of relations among the generators of $\cL(\x,B)$.
\end{Bem}

{A seed $ (\x,Q) $ is called {\em totally mutable} if it admits unlimited mutations in all directions. 
	Since we assume in this article that all vertices of a given quiver are mutable, 
	the  seeds $ ( \x,  Q ) $, which we consider, are always totally mutable.}

\begin{Thm}[{cf.~\cite[Theorem~1.20]{BFZ2005}}]
	\label{Thm:clus=low} 
	The cluster algebra $ \cA (\x, Q) $ associated with a totally mutable seed $ (\x, Q) $ is equal to the lower bound $ \cL(\x, Q) $ if and only if $ Q $ is acyclic.
\end{Thm}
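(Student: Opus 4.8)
The plan is to squeeze $\cA(\x,Q)$ between the lower bound $\cL(\x,Q)$ and a suitable \emph{upper} bound. We always have $\cL(\x,Q)\subseteq\cA(\x,Q)$, so the content is an upper estimate. For the initial seed $(\x,Q)$ and each of its $n$ neighbours $\mu_k(\x,Q)$, consider the ring of Laurent polynomials in the corresponding cluster inside $\cF$, and set
\[
\cU(\x,Q):=K[x_1^{\pm 1},\dots,x_n^{\pm 1}]\cap\bigcap_{k=1}^{n}K[x_1^{\pm 1},\dots,x_{k-1}^{\pm 1},(x_k')^{\pm 1},x_{k+1}^{\pm 1},\dots,x_n^{\pm 1}].
\]
Since any seed may serve as the initial one, the Laurent phenomenon shows every cluster variable is a Laurent polynomial in the cluster of each seed; hence $\cA(\x,Q)$ lies in the intersection of the Laurent rings over \emph{all} seeds, which is contained in the intersection over only the initial seed and its neighbours. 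This gives the chain $\cL(\x,Q)\subseteq\cA(\x,Q)\subseteq\cU(\x,Q)$, and the whole theorem reduces to deciding when the two outer terms agree.

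For the implication ``$Q$ acyclic $\Rightarrow\cA=\cL$'' it then suffices to prove $\cU(\x,Q)=\cL(\x,Q)$, since equality of the outer terms collapses the chain. Here I would invoke Lemma~\ref{L:BFZ_Cor1.17}: when $Q$ is acyclic the exchange relations form a Gr\"obner basis of the ideal $J$ of relations among $x_1,\dots,x_n,x_1',\dots,x_n'$, so the standard monomials (those divisible by no leading term $x_kx_k'$) form an explicit $K$-basis of $\cL(\x,Q)$. The task becomes showing that every element of $\cU(\x,Q)$ already lies in the span of these standard monomials. Fixing a linear order on the vertices compatible with the acyclic orientation, one expands a putative $y\in\cU(\x,Q)$ as a Laurent polynomial in $\x$ and uses, for each $k$, regularity of $y$ after the single mutation $\mu_k$ to bound which exponents of $x_k$ may occur; an induction along the chosen vertex order then rewrites $y$ as a polynomial in the $2n$ generators, placing it in $\cL(\x,Q)$.

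For the converse ``$\cA=\cL\Rightarrow Q$ acyclic'' I would argue by contraposition: assuming $Q$ contains an oriented cycle, I would exhibit an element of $\cA(\x,Q)\setminus\cL(\x,Q)$. The natural candidate is a cluster variable produced by mutating successively around such a cycle; one computes its Laurent expansion in the initial cluster $\x$ and shows by a support (Newton polytope) comparison that it cannot be written as a $K$-polynomial in $x_i,x_i'$. Equivalently, in the non-acyclic case $\cL(\x,Q)\subsetneq\cU(\x,Q)$, and the point is to check that this gap is already realized by an element of $\cA(\x,Q)$.

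The main obstacle is the acyclic case, and specifically the inclusion $\cU(\x,Q)\subseteq\cL(\x,Q)$: controlling the full intersection of the $n+1$ Laurent rings and proving it is spanned by the standard monomials is the technical heart, and it is exactly where acyclicity enters, since the compatible vertex order is what makes the exponent bounds and the inductive rewriting terminate. The converse, though conceptually lighter, still requires producing a concrete witness outside $\cL(\x,Q)$, and pinning down a clean such witness for an arbitrary oriented cycle is the delicate point there.
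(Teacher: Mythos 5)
The first thing to note is that the paper contains no proof of this statement at all: it is quoted from Berenstein--Fomin--Zelevinsky \cite{BFZ2005}, so your attempt can only be measured against that source. Your sandwich $\cL(\x,Q)\subseteq\cA(\x,Q)\subseteq\cU(\x,Q)$ is indeed the BFZ framework, and the inclusion $\cA\subseteq\cU$ via the Laurent phenomenon is fine. The genuine gap is your reduction of the acyclic direction to the equality $\cU(\x,Q)=\cL(\x,Q)$: that equality is BFZ's Theorem~1.18, which carries the extra hypothesis that the seed is \emph{coprime} (pairwise coprime exchange polynomials), and with trivial coefficients acyclic seeds are usually not coprime. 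Worse, the equality genuinely fails then, already for the seed of Example~\ref{Ex:A_3} of this paper: for the quiver $1\to 2\to 3$ the exchange polynomials are $1+x_2$, $x_1+x_3$, $1+x_2$, so the first and third coincide, and the element
\[
w \ := \ \frac{1+x_2}{x_1x_3} \ = \ \frac{y_1}{x_3} \ = \ \frac{y_3}{x_1}
\]
lies in $\cU(\x,Q)$ but not in $\cL(\x,Q)$. Indeed, $w$ is visibly Laurent in the initial cluster and in the clusters of $\mu_1$ and $\mu_3$, and substituting $x_2=(x_1+x_3)/y_2$ gives $w=(y_2+x_1+x_3)/(x_1x_3y_2)$, a polynomial over a monomial, hence Laurent in the cluster of $\mu_2$ as well. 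On the other hand, if $w=F(x_1,x_2,x_3,y_1,y_2,y_3)$ for some polynomial $F$ over $K$, apply the specialization $x_1=x_3=t$, $x_2=-1+st^2$ (defined on the Laurent ring containing both sides): then $y_1=y_3=st$, $y_2=2t/(st^2-1)$, $w=s$, so $s=F\bigl(t,-1+st^2,t,st,2t/(st^2-1),st\bigr)$, and letting $t\to 0$ forces $s=F(0,-1,0,0,0,0)\in K$, a contradiction. Thus $\cL\subsetneq\cU$ for this acyclic seed even though $\cA=\cL$ holds by the theorem; the chain cannot be collapsed from the outside, so your proposed route fails exactly in the generality this paper needs (the $A_n$, $D_n$, $E_7$ and star quivers used here are all non-coprime).

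The failure point in your sketched induction is precisely where coprimality enters BFZ's proof of Theorem~1.18: when you combine the divisibility conditions coming from different mutation directions $k$, you need distinct exchange polynomials to force distinct numerator factors, whereas $w$ exploits that the single factor $1+x_2$ clears both denominators $x_1$ and $x_3$. A correct argument must therefore prove $\cA\subseteq\cL$ without passing through $\cU$; for instance, one can first treat coprime acyclic seeds by your method and then reduce the general case to it by enlarging the coefficients (e.g.\ principal coefficients, which restore coprimality and preserve acyclicity) and specializing the frozen variables to $1$, which maps cluster variables onto cluster variables and $\cL$ onto $\cL$. Finally, your converse direction is, as you say yourself, only a plan: no witness outside $\cL(\x,Q)$ is actually produced for a cyclic seed. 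So as written, both implications remain open, and the first is routed through an intermediate claim that is provably false here.
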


\begin{example}
	\label{Ex:A_3}
	Let $ (\x,Q) $ be the seed corresponding to
	\begin{center} 
		\begin{tikzpicture}[->,>=stealth',shorten >=1pt,auto,node distance=2cm, thick,main node/.style={circle,draw}]
		
		\node[main node] (1) {$1$};
		\node[main node] (2) [right of=1] {$2$};
		\node[main node] (3) [right of=2] {$3$};
		
		\path
		(1) edge (2)
		(2) edge (3)
		;
		\end{tikzpicture}.
	\end{center}
	By the previous result, the cluster algebra $ \cA (\x,Q) $ is given by 
	\[
	\cA (\x,Q)
	= K [x_1, x_2, x_3, y_1, y_2 , y_3 ]
	/ I,
	\]
	\[
	I :=
	\langle 
	x_1 y_1 - x_2 - 1,
	\,
	x_2 y_2 - x_3 - x_1,
	\, 
	x_3 y_3 - 1 - x_2
	\rangle
	. 
	\]
\end{example}

Recall that a vertex $ i $ of a quiver $ Q $ is called 
a {\em sink} (resp.~{\em source})
if $ i $ is the target (resp.~source) of every arrow in $ Q $ incident to $ i $.
In Example~\ref{Ex:A_3}, the vertex $ 1 $ is a source, while $ 3 $ is a sink. 
As a consequence of {\cite[Proposition~9.2]{FZ2003II}}, one has the following lemma.

\begin{lemma}
	\label{Lem:tree_equiv}
	All orientations on a tree are mutation-equivalent via sequences of mutations at sinks and sources. 
\end{lemma}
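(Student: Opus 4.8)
The plan is to reduce the statement to a purely combinatorial fact about orientations of the underlying tree, and then prove that fact by induction on the number of vertices, peeling off a leaf. First I would record the crucial simplification of mutation at a sink or source. If $k$ is a sink or a source of a quiver $Q$ whose underlying graph is a tree $T$, then $\mu_k$ acts very simply: in step (1) of the mutation there are no directed paths $i \to k \to j$ (a sink has no outgoing arrow, a source no incoming arrow), so no arrows are created; step (2) merely reverses all arrows incident to $k$; and since the underlying graph remains $T$, which is simple and acyclic, step (3) removes nothing. Hence mutating at a sink or source of a tree-quiver just reverses all arrows at that vertex and leaves the underlying tree unchanged, and every orientation of $T$ is automatically acyclic. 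It therefore suffices to prove: for a finite tree $T$, any two orientations are related by a sequence of \emph{flips}, where a flip reverses all edges at a vertex that is currently a sink or a source.

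For the induction, fix a leaf $\ell$ of $T$ with unique neighbour $v$ and set $T' := T \setminus \{\ell\}$. Being a leaf, $\ell$ is a sink or a source in every orientation, and a single flip at $\ell$ turns it into a sink. So, given two orientations $\mathcal{O}_1,\mathcal{O}_2$ of $T$, I would first flip at $\ell$ (if necessary) so that $\ell$ is a sink in both, i.e.\ the pendant edge is oriented $v \to \ell$. It then suffices to connect these two orientations while keeping $\ell$ a sink: by the inductive hypothesis applied to $T'$ there is a sequence of flips connecting the induced orientations $\mathcal{O}_1|_{T'}$ and $\mathcal{O}_2|_{T'}$, and the goal is to lift this sequence to $T$.

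A flip of $T'$ at a vertex $w \neq v$ lifts verbatim to $T$, because the edges at $w$ are identical in $T$ and in $T'$, so $w$ is a sink or source of $T$ exactly when it is one of $T'$, and the pendant edge is untouched. The only delicate case, and the heart of the argument, is a flip of $T'$ at the vertex $v$: in $T$ the vertex $v$ carries the extra outgoing edge $v \to \ell$ and so need not be a sink or source of $T$. Here I would use a two-step gadget realizing the $T'$-flip at $v$ while restoring $\ell$ to a sink. If $v$ is a source of $T'$, then it is a source of $T$, so I flip at $v$ (reversing the $T'$-edges at $v$ as required, but turning $\ell$ into a source) and then flip at $\ell$, returning the pendant edge to $v \to \ell$. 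If instead $v$ is a sink of $T'$, I first flip at $\ell$ (legal, since $\ell$ is a sink), making the pendant edge $\ell \to v$ so that $v$ becomes a sink of $T$, then flip at $v$, after which the pendant edge returns to $v \to \ell$. In both cases the net effect on $T'$ is precisely the desired flip at $v$, the invariant ``$\ell$ is a sink'' is preserved, and the restriction of the $T$-orientation to $T'$ continues to match the current orientation in the $T'$-sequence.

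Lifting the whole $T'$-connecting sequence in this manner produces a sequence of sink/source flips on $T$ connecting $\mathcal{O}_1$ and $\mathcal{O}_2$ after the preliminary adjustments at $\ell$; since each flip is an involution, those preliminary adjustments are themselves reversible, so the two given orientations are flip-equivalent. The base case (a single vertex, or a single edge whose orientation is reversed by one flip) is immediate, completing the induction. Re-reading the flip operation as mutation at a sink or source via the reduction of the first step yields the lemma. I expect the two-step gadget at the neighbour $v$ of the leaf to be the only real obstacle; everything else is bookkeeping.
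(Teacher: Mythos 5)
Your proof is correct. The key reduction (mutation at a sink or source of a tree quiver creates no arrows and no $2$-cycles, hence is just a reversal of the incident arrows) is sound, and the leaf-peeling induction works: the verbatim lift of flips at vertices $w \neq v$ is justified because $v$ is the only neighbour of $\ell$, and both two-step gadgets at $v$ (flip at $v$ then $\ell$ when $v$ is a source of $T'$; flip at $\ell$ then $v$ when $v$ is a sink of $T'$) do realize the $T'$-flip at $v$ while preserving the invariant that $\ell$ is a sink. The comparison with the paper is, however, lopsided: the paper gives no proof at all, deducing the lemma as a consequence of Proposition~9.2 of Fomin--Zelevinsky's finite type classification paper \cite{FZ2003II}. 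So your argument is not a variant of the paper's proof but a replacement for the citation. What the citation buys is brevity and a statement already formulated in the generality of diagrams of skew-symmetrizable matrices; what your argument buys is a self-contained, purely graph-theoretic proof that in fact yields a slightly stronger conclusion (the two orientations are connected to become literally equal, not merely isomorphic as quivers), and it would extend with no change to valued trees, since sink/source mutation is again just arrow reversal there.
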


\begin{example}
	Let us continue Example~\ref{Ex:A_3}.
	The exchange relations imply:
	\[
	\begin{array}{l}
	x_2 = x_1 y_1 - 1 = 
	\det \begin{pmatrix}
	x_1 & -1 \\
	-1 & y_1 
	\end{pmatrix};
	\\
	x_3  = x_2 y_2 - x_1 
	= x_1 y_1 y_2  - y_2 - x_1
	= 
	\det \begin{pmatrix}
	x_1 & -1 & 0 \\
	-1 & y_1 & -1 \\
	0 & -1 & y_2
	\end{pmatrix}.
	\end{array} 
	\]
	Therefore $ \Spec(\cA(\x,Q)) $ is isomorphic to a hypersurface,
	\[
	\cA := \cA(x,Q) 
	\cong K [x_1, y_1, y_2 , y_3] / \langle  
	x_1 y_1 y_2 y_3 - y_2 y_3 - x_1 y_3 - x_1 y_1 \rangle 
	.
	\]
	Using the Jacobian criterion, one determines that the singular locus of $ \Spec( \cA ) $ is the origin $  V (x_1, y_1 , y_2 , y_3 ) $. 
	Locally at the origin, $ 1 + y_2 y_3 $ is invertible. 
	In particular, we may introduce the local variable 
	$ z_1 := y_1 (1 + y_2 y_3 ) + y_3 $ and we obtain 
	\[
	x_1 y_1 y_2 y_3 - y_2 y_3 - x_1 y_3 - x_1 y_1 = 
	- (  y_2 y_3 + x_1 z_1 ) 
	\ . 
	\]
	Therefore, $ \Spec(\cA) $ has an singularity of type $ A_1 $ at the origin.
	In particular, the blowup of the origin resolves the singularities.  
\end{example}

The determinants arising above are examples of continuants. 
They will play a central role in our considerations, 
which is why we study some of their properties in the next section. 

\medskip

\subsection{Finite type classification}
\label{subsec:fin}

{The central object of the present paper are cluster algebras of finite type.
We end the section by recalling this notion as well as a classification theorem connecting cluster algebras of finite type with Dynkin \change{diagrams}.
Precise references for more details are \cite{FZ2003II}, \cite{FWZ2017}, or \cite[5.1]{Marsh13}, for example.}

\begin{defi}
	{Recall that we fixed a field $ \cF $, which is isomorphic to the field of rational functions in $ n $ variables over a field $ K $.}
	 
	\begin{enumerate}[leftmargin=*,label=(\arabic*)]
		\item 
		Let $ ( \x, B ) $ be a seed in $ \cF $. The cluster algebra $ \cA ( \x , B ) $ is said to be of {\em finite type} if 
		there are only finitely many distinct seeds mutation-equivalent to $ ( \x, B ) $. 
		
		\item  
		For any $n \times n$ square {integer} matrix $ B $,  
		its {\em Cartan counterpart} $ A (B) = (a_{i,j}) $, 
		is defined to be the integer matrix $ a_{i,i} := 2 $ and $ a_{i,j} := -|b_{i,j}| $ if $ i \neq j $. 
	
\end{enumerate}
\end{defi}

{Recall that a Cartan matrix $ A = (a_{i,j}) $ is called {\em of finite type} if all its principal minors are positive. 
	For the $ 2 \times 2 $ principal minors, this implies the condition $ a_{i,j} a_{j,i} \leq 3 $ for $ i \neq j $.}

\begin{defi}[{\cite[Definition~5.2.4]{FWZ2017}}] 
{Let $ A = (a_{i,j}) $ be an $ n \times n $ Cartan matrix of finite type.}
{The {\em Dynkin diagram} of $ A $ is 
		a graph with vertices $ \{ 1, \ldots, n \} $,
		for which the edges are	determined as follows:
		Let $ i, j \in \{ 1, \ldots, n \} $ with $ i \neq  j $. 
		If $ a_{i,j} a_{j,i} \leq 1 $,
		then the vertices $ i $ and $ j $ are joined by an edge if $ a_{i,j} \neq 0 $. 
		Whenever $ a_{i,j} a_{j,i} > 1 $, 
		the following rule is applied for the edge between $ i $ and $ j $:}
		\[
		\begin{array}{ll} 
			\begin{tikzpicture}[>=stealth',shorten >=1pt,auto,node distance=2cm, thick,main node/.style={circle,draw}]

			\node[main node] at (0,0) {$ i $};
			\node[main node] at (2,0) {$ j $};
			
			\draw[->] (0.3,0.1) -- (1.7,0.1);
			\draw[->] (0.3,-0.1) -- (1.7,-0.1);
			
			\end{tikzpicture}
			&
			\raisebox{7pt}{if $ a_{i,j} = - 1 $ and $ a_{j,i} = -2 $,}
			
			\\[5pt]
			
			\begin{tikzpicture}[->,>=stealth',shorten >=1pt,auto,node distance=2cm, thick,main node/.style={circle,draw}]

			\node[main node] at (0,0) {$ i $};
			\node[main node] at (2,0) {$ j $};
			
			\draw (0.3,0) -- (1.7,0);
			\draw (0.3,0.15) -- (1.7,0.15);
			\draw (0.3,-0.15) -- (1.7,-0.15);
			
			\end{tikzpicture}
			
			&
			\raisebox{7pt}{if $ a_{i,j} = - 1 $ and $ a_{j,i} = -3 $.}

		\end{array} 	
		\]
\end{defi}

{Here are two examples.
The graph on the right hand side is the Dynkin diagram of the corresponding matrix on the left hand side:}
\[
	\begin{array}{ccc}
	
	\begin{pmatrix} 
	2 & -1 & 0 \\
	-1 & 2 & -1 \\
	0 & -2& 2 \\
	\end{pmatrix}  
	
	&
	
	\hspace{1cm}
	
	&
	
	\begin{tikzpicture}[>=stealth',shorten >=1pt,auto,node distance=2cm, thick,main node/.style={circle,draw}]

	\node[main node] at (0,0) {$ 1 $};
	\node[main node] at (2,0) {$ 2 $};
	\node[main node] at (4,0) {$ 3 $};
	
	\draw (0.31,0) -- (1.7,0);
	\draw[->] (2.31,0.1) -- (3.7,0.1);
	\draw[->] (2.31,-0.1) -- (3.7,-0.1);
	
	\end{tikzpicture}
	
	\\[25pt]

	\begin{pmatrix} 
	2 & 0 & 0 & 0 \\\
	0 & 2 & -3 & 0 \\
	0 & -1 & 2 & 0 \\
	0 & 0 & 0 & 2 \\
	\end{pmatrix}  
	
	&
	
	\hspace{1cm}
	
	&
	
	\begin{tikzpicture}[>=stealth',shorten >=1pt,auto,node distance=2cm, thick,main node/.style={circle,draw}]

	\node[main node] at (0,0) {$ 1 $};
	\node[main node] at (2,0) {$ 2 $};
	\node[main node] at (4,0) {$ 3 $};
	\node[main node] at (6,0) {$ 4 $};
	
	\draw (0.31,0) -- (1.7,0);
	\draw[->] (3.68,0) -- (2.3,0);
	\draw[->] (3.7,0.15) -- (2.3,0.15);
	\draw[->] (3.7,-0.15) -- (2.3,-0.15);
	\draw (4.31,0) -- (5.7,0);
	
	\end{tikzpicture}

	\end{array}
\]

There is the following classification of cluster algebras of finite type, cf.~\cite[Theorem~1.4]{FZ2003II}. 

\begin{Thm} \
	Let $ ( \x, B) $ be a seed.
	The cluster algebra $ \cA ( \x , B ) $ is of finite type 
	if and only if 
	the Cartan counterpart of one of its seeds is a Cartan matrix of finite type.
\end{Thm}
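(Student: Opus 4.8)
The plan is to follow the approach of Fomin and Zelevinsky \cite{FZ2003II} and to translate the classification into the combinatorics of finite root systems by means of \emph{denominator vectors}. Fix the initial seed $ (\x, B) $ with $ \x = (x_1, \ldots, x_n) $. By the Laurent phenomenon, every cluster variable $ z $ is a Laurent polynomial in $ \x $, so it has a well-defined denominator vector $ \mathbf{d}(z) = (d_1, \ldots, d_n) \in \ZZ^n $, where $ z = N / \prod_i x_i^{d_i} $ with $ N \in K[\x] $ not divisible by any $ x_i $; in particular $ \mathbf{d}(x_i) = - e_i $. The engine of the whole argument is the claim that, \emph{when $ A(B) $ is a Cartan matrix of finite type}, the map $ \mathbf{d} $ is a bijection from the set of cluster variables onto the set $ \Phi_{\geq -1} $ of almost positive roots (the positive roots together with the negative simple roots) of the root system $ \Phi $ attached to $ A(B) $. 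Since $ |\Phi_{\geq -1}| < \infty $, this immediately yields finitely many cluster variables, hence finitely many clusters and seeds.

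I would first settle the rank two case $ n = 2 $ by direct computation, as it both anchors the general argument and isolates the numerology. For $ B = \left( \begin{smallmatrix} 0 & b \\ -c & 0 \end{smallmatrix} \right) $ with $ b, c > 0 $, iterated mutation produces cluster variables whose denominator vectors obey a linear recurrence controlled by the product $ bc $; the sequence is periodic --- equivalently, there are finitely many cluster variables --- precisely when $ bc \leq 3 $. This is exactly the condition $ \det A(B) = 4 - bc > 0 $, i.e.\ that the Cartan counterpart is of finite type ($ A_1 \times A_1, A_2, B_2, G_2 $), and in these cases the denominator vectors are exactly the almost positive roots of the corresponding rank two root system.

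For the sufficiency direction I would assume $ A(B) $ of finite type and prove the bijection above in three parts: that each non-initial denominator vector is a positive root; that $ \mathbf{d} $ is injective; and that it is surjective onto $ \Phi_{\geq -1} $. The decisive tool here is the \emph{compatibility degree} on $ \Phi_{\geq -1} $ together with the piecewise-linear (tropical) action by which seed mutation transforms denominator vectors: one shows that this action matches the reflection combinatorics of the root system, that clusters correspond to the maximal sets of pairwise compatible almost positive roots, and that the rank two computation of the previous step governs each exchange. This is the heart of the matter and relies on the root-theoretic machinery of the generalized associahedron and $ Y $-system periodicity. Granting the bijection, $ \cA(\x,B) $ has $ |\Phi_{\geq -1}| $ cluster variables and is therefore of finite type.

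Conversely, assume $ \cA(\x, B) $ is of finite type, so the exchange graph is finite. I would first produce, inside the (finite) mutation class, a seed $ (\x', B') $ whose quiver $ \Gamma(B') $ is acyclic, using the finiteness of the mutation class together with a diagram-mutation argument. For such an acyclic seed the exchange relations generate all relations (Lemma~\ref{L:BFZ_Cor1.17}), and the denominator vectors of the once-mutated variables are the simple roots of $ A(B') $; if $ A(B') $ were not of finite type, the root system $ \Phi $ would be infinite and the tropical mutation dynamics would produce infinitely many distinct denominator vectors, hence infinitely many cluster variables, contradicting finiteness. Thus $ A(B') $ is of finite type. The main obstacle throughout is the sufficiency step: establishing that mutation acts on denominator vectors by tropical reflections and that clusters index the maximal cones of the cluster fan. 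Everything else --- the rank two base case and the reduction to an acyclic seed --- is comparatively elementary once this structural bijection is available.
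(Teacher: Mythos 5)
First, a point about the comparison itself: the paper does not prove this theorem. It is recalled as background, with a citation to \cite[Theorem~1.4]{FZ2003II}, because the singularity classification only needs the statement (and the list of Dynkin types), not its proof. So there is no ``paper's own proof'' to measure your attempt against; what you have written is, in effect, an outline of the original Fomin--Zelevinsky argument. Your sufficiency direction is a faithful sketch of that argument: the rank-two computation ($bc \le 3$ if and only if $\det A(B) = 4 - bc > 0$), denominator vectors, and the bijection between cluster variables and the almost positive roots $\Phi_{\ge -1}$, with the hard structural content (compatibility degree, cluster fan, $Y$-system periodicity \cite{FZ2003}) explicitly deferred to the cited machinery. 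As a summary this is accurate, but it is an outline of \cite{FZ2003II}, not a self-contained proof.

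The genuine gap is in your necessity direction, where you also depart from Fomin--Zelevinsky. (i) Your first step --- ``produce, inside the finite mutation class, a seed with $\Gamma(B')$ acyclic, using finiteness together with a diagram-mutation argument'' --- is not an elementary preliminary: the existence of an acyclic seed in the mutation class of a finite type cluster algebra is essentially equivalent to the classification theorem you are proving, so as stated this step is circular. The actual argument in \cite{FZ2003II} avoids it entirely, going through $2$-finiteness: finite type forces $|b'_{ij}b'_{ji}| \le 3$ for every pair of indices in \emph{every} seed of the mutation class (by restriction to rank-two subalgebras plus your rank-two computation), and then the combinatorial core of the paper shows that every $2$-finite matrix is mutation equivalent to one whose Cartan counterpart is of finite type. (ii) Even granting an acyclic seed $(\x', B')$, your inference ``if $A(B')$ is not of finite type, the tropical dynamics produce infinitely many distinct denominator vectors, hence infinitely many cluster variables'' requires knowing how denominator vectors behave under mutation for seeds of \emph{infinite} type, which is precisely what is unavailable at this stage; note that a single seed can have all products $|b'_{ij}b'_{ji}| \le 3$ and still be of infinite type (acyclic orientations of affine diagrams), so no argument that inspects one seed and its rank-two pieces can decide infiniteness. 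The necessity direction must quantify over the whole mutation class, which is exactly what the $2$-finiteness argument accomplishes and what your sketch misses.
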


Recall, that the Cartan matrices $ A(B) $ of finite type are classified by the Dynkin diagrams 
$ A_n $,
$ B_n $,
$ C_n $,
$ D_n $, 
$ E_6, E_7, E_8, F_4, G_2 $
(for $ n \geq 1, 2, 3, 4 $ respectively),
see \cite[Theorem~5.2.6]{FWZ2017} or \cite[Section~6.4]{Carter2005}. 
\\

\begin{Bem} 
	{In the proof for the classification of finite type cluster algebras
	(see \cite{FZ2003} or \cite[Chapter 5]{FWZ2017}), 
	the non-quiver cases $ B_n, C_n, F_4, G_2 $ are connected to
	the quiver cases via the process of folding.
	The latter corresponds to taking a quotient with respect to a suitable group action on the quiver, see \cite[\S 4.4]{FWZ2017}. 
	More precisely, one has:
	\begin{itemize}
		\item 
		The seed pattern of type $ G_2 $ can be obtained from $D_4$ via folding 
		(\cite[\S~5.7]{FWZ2017}); 
		
		\item
		the seed pattern of type $ F_4 $ arises from $ E_6 $ through folding (\cite[Exercise 4.4.12 and \S~5.7]{FWZ2017});
		
		\item 
		the seed pattern of type $  C_n $ comes from $ A_{2n-1} $ via folding 
		(\cite[Proof of Theorem~5.5.2]{FWZ2017}); 
		
		\item
		we get the seed pattern of type $ B_n $ from $ D_{n+1} $ by folding 
		(\cite[Proof of Theorem~5.5.1]{FWZ2017}).  
		\\
	\end{itemize} 
}
\end{Bem}

\bigskip

\section{Continuant Polynomials} \label{S:continuants}

Continuants are classic in the study of determinants and were already considered by Euler.

\begin{defi} A \emph{continuant} of order $n$ is the determinant of a tri-diagonal matrix of the form 
$$\begin{pmatrix} y_1 & b_1 & 0 & \cdots & \cdots & \cdots \\
c_1 & y_2 & b_2 & 0 & \cdots & \cdots  \\
0 & c_2 & y_3 & b_3 & 0 & \cdots \\
\vdots & \ddots &  \ddots & \ddots &  \ddots & \ddots \\
0 & \cdots & 0  &  c_{n-2} & y_{n-1} & b_{n-1} \\
0 & \cdots & \cdots  & 0 & c_{n-1} & y_n \end{pmatrix} \ . $$
We will consider the special case where $b_i=c_i=-1$ for all $1 \leq i \leq n-1$, and denote this continuant by $P_n(y_1, \ldots, y_n)$. We set $P_0:=1$.
\end{defi}

These special continuants also appear in the work of Dupont \cite{Dupont} under the name \emph{generalized Chebyshev polynomials}. 

\begin{example} \label{Ex:continuants}
One can obtain all terms in the continuant from $y_1 \cdots y_n$ by replacing every pair of consecutive $y_i$ by $-1$ (see \cite[545]{Muir}). \\
For example,
one has $P_0=1$, $P_1(y_1)=y_1$, $P_2(y_1,y_2)=y_1y_2-1$ and 
$$P_3(y_1, y_2, y_3)= \det \begin{pmatrix} y_1 & -1 & 0 \\ -1 & y_2 & -1 \\ 0 & -1 & y_3  \end{pmatrix}=y_1y_2y_3-y_1-y_3 \ .$$

\end{example}

From the description as \change{a} determinant it is obvious that the continuant is symmetric: 
\begin{lemma}[Symmetry]\label{L:pnsym} 
	We have
$$P_n(y_1,\ldots,y_n)=P_{n}(y_n,\ldots,y_1).$$
\end{lemma}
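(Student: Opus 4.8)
The plan is to prove the symmetry $P_n(y_1,\ldots,y_n)=P_n(y_n,\ldots,y_1)$ by exploiting the determinantal definition, using the fact that the continuant is the determinant of the specific tridiagonal matrix with $y_1,\ldots,y_n$ on the diagonal and $-1$'s on both the super- and sub-diagonals. First I would observe that reversing the order of the arguments, i.e.\ replacing $y_i$ by $y_{n+1-i}$, corresponds to conjugating the defining tridiagonal matrix $M$ by the ``reversal'' (anti-diagonal) permutation matrix $J$ whose $(i,j)$ entry is $1$ if $i+j=n+1$ and $0$ otherwise. Concretely, $J M J$ is again tridiagonal: the diagonal entries get reversed to $y_n,\ldots,y_1$, while the off-diagonal $-1$'s are permuted among themselves since $J$ simply reflects the matrix across its anti-diagonal.

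The key computation is then that $\det(J M J) = \det(J)^2 \det(M) = \det(M)$, because $\det(J)=\pm 1$ so $\det(J)^2=1$. Since $J M J$ is precisely the tridiagonal matrix defining $P_n(y_n,\ldots,y_1)$, this yields the claimed equality directly. I would spell out that reflecting the matrix of $M$ across the anti-diagonal sends the entry in position $(i,i+1)$ (value $-1$) to position $(n+1-i,\,n-i)$, which lies on the subdiagonal and again has value $-1$, and symmetrically for the subdiagonal entries, so the off-diagonal structure is genuinely preserved.

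An alternative, equally clean approach would be to transpose: reflecting a tridiagonal matrix across its anti-diagonal is the same as the composition of transposition with the antidiagonal flip, and since $\det(M)=\det(M^{\mathsf T})$ one can package the argument so that only the symmetry of the super/subdiagonal pattern (here both equal to $-1$) is used. In fact the underlying reason the statement holds is exactly that the chosen continuant has $b_i=c_i=-1$, i.e.\ the off-diagonal entries are symmetric; this is what makes the anti-diagonal reflection preserve the matrix up to the determinant-invariant operations.

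The main obstacle is essentially bookkeeping rather than conceptual: one must verify carefully that conjugation by $J$ (or the anti-diagonal reflection) really does produce the matrix whose determinant is $P_n(y_n,\ldots,y_1)$, i.e.\ that it keeps the result tridiagonal with the reversed diagonal and unchanged off-diagonals, rather than scrambling entries off the three central diagonals. Once the index arithmetic $i \mapsto n+1-i$ is tracked, the determinant identity is immediate. As a sanity check one could also give a quick inductive proof via the standard recursion $P_n(y_1,\ldots,y_n)=y_n P_{n-1}(y_1,\ldots,y_{n-1}) - P_{n-2}(y_1,\ldots,y_{n-2})$ expanding along the last row and column, comparing with the analogous recursion for the reversed arguments, but the determinant-reflection argument is the most transparent and I would present that as the main proof.
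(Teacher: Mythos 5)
Your proof is correct and is essentially the paper's argument made explicit: the paper simply asserts the symmetry is ``obvious from the description as a determinant,'' and your conjugation $M \mapsto JMJ$ by the reversal permutation, together with $\det(J)^2=1$, is exactly the standard way to justify that claim. One tiny remark: $JMJ$ is really the $180^\circ$ rotation of $M$ (entry $(i,j)\mapsto(n{+}1{-}i,\,n{+}1{-}j)$) rather than the anti-diagonal reflection, but since $M$ is symmetric the two coincide, and your explicit index check $(i,i{+}1)\mapsto(n{+}1{-}i,\,n{-}i)$ is the correct one.
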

The following properties are well-known, see e.g~\cite[Number 547 (3), Number 561,(4)]{Muir}:

\begin{lemma}[Recursion]\label{L:pnrecur} 
	The following recursion holds:
$$P_n(y_1,\ldots,y_n)=y_1P_{n-1}(y_2,\ldots,y_n)-P_{n-2}(y_3,\ldots,y_n).$$

Moreover, for $1 \leq r \leq n-1$ we have
$$P_n(y_1,\ldots,y_n)=P_{k}(y_1,\ldots,y_k)\cdot P_{n-k}(y_{k+1},\ldots,y_n)-P_{k-1}(y_1,\ldots,y_{k-1})\cdot P_{n-k-1}(y_{k+2},\ldots, y_n).$$
\end{lemma}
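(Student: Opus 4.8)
The plan is to prove both recursions by induction using the cofactor (Laplace) expansion of the defining tridiagonal determinant. For the first (three-term) recursion, I would expand $P_n(y_1,\ldots,y_n)$ along its first row. Since the first row has only two nonzero entries, namely $y_1$ in position $(1,1)$ and $b_1 = -1$ in position $(1,2)$, the expansion gives
\[
P_n = y_1 \cdot M_{11} - (-1) \cdot M_{12},
\]
where $M_{11}$ is the minor obtained by deleting the first row and column, and $M_{12}$ is the minor obtained by deleting the first row and second column (with the appropriate sign already folded in). The minor $M_{11}$ is precisely the tridiagonal determinant of the same shape on the variables $y_2,\ldots,y_n$, hence equals $P_{n-1}(y_2,\ldots,y_n)$. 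The key computation is to identify $M_{12}$: after deleting the first row and second column, the resulting matrix is block lower-triangular, with a single entry $c_1 = -1$ in its top-left corner and the tridiagonal block on $y_3,\ldots,y_n$ below it; expanding once more shows $M_{12} = (-1)\cdot P_{n-2}(y_3,\ldots,y_n)$. Combining signs yields $P_n = y_1 P_{n-1}(y_2,\ldots,y_n) - P_{n-2}(y_3,\ldots,y_n)$, as claimed.

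For the general splitting identity, my plan is to induct on $n$ and reduce to the three-term recursion, or alternatively to prove it directly by a block decomposition of the determinant at the cut between index $k$ and $k+1$. Concretely, I would write the tridiagonal matrix in $2\times 2$ block form, where the top-left $k\times k$ block and the bottom-right $(n-k)\times(n-k)$ block are themselves tridiagonal matrices of continuant type, and the only coupling between the two blocks consists of the entries $b_k = -1$ (position $(k,k+1)$) and $c_k = -1$ (position $(k+1,k)$). Expanding the determinant while keeping track of the single off-diagonal coupling, the leading term is the product $P_k(y_1,\ldots,y_k)\cdot P_{n-k}(y_{k+1},\ldots,y_n)$ of the two diagonal blocks, and the correction term, coming from the two coupling entries $b_k c_k = 1$, is $P_{k-1}(y_1,\ldots,y_{k-1})\cdot P_{n-k-1}(y_{k+2},\ldots,y_n)$, with a minus sign. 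This is the standard cofactor bookkeeping for a tridiagonal determinant split across one interior edge.

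A cleaner route that avoids delicate block-determinant signs is to fix $k$ and induct on $n$, treating the first recursion (Lemma~\ref{L:pnrecur}, first part) as the base mechanism. Setting $R_k(m)$ to be the claimed right-hand side, one checks the identity directly for the smallest relevant $n$ (e.g. $n = k+1$, where it reduces to the three-term recursion applied at the boundary), and then shows that $R_k(n)$ satisfies the same three-term recursion in the last index as $P_n$ does. Since $P_n$ is uniquely determined by that recursion together with its initial values, the two sequences must agree. Throughout, the symmetry from Lemma~\ref{L:pnsym} lets me expand along whichever row is most convenient and lets me apply the three-term recursion at either end of the index range.

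The main obstacle I anticipate is purely bookkeeping: correctly tracking the cofactor signs and the $(-1)$ factors from $b_i = c_i = -1$ so that the subtracted term comes out with exactly the right sign and index shifts $y_3,\ldots,y_n$ (for the first recursion) and $y_1,\ldots,y_{k-1}$ together with $y_{k+2},\ldots,y_n$ (for the second). There is no conceptual difficulty once the Laplace expansion is set up, but the indices must be pinned down carefully, which is why I would favor the inductive reduction to the three-term recursion over a direct block-determinant argument, since the former isolates the sign computation into a single, already-verified step.
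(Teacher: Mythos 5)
Your proof is correct, but it cannot be ``essentially the same as the paper's'' for a simple reason: the paper gives no proof of this lemma at all --- it states the two identities as well-known and cites Muir's treatise on determinants \cite[Numbers 547, 561]{Muir}. So what you have done is supply the missing classical argument, and it holds up. Your first-row Laplace expansion is the standard derivation of the three-term recursion, and the signs work out: the $(1,2)$-term contributes $(-1)\cdot(-1)^{1+2}M_{12}=+M_{12}$, and $M_{12}=-P_{n-2}(y_3,\ldots,y_n)$ because the first column of that minor contains the single nonzero entry $c_1=-1$. (One cosmetic slip: after deleting row $1$ and column $2$, the minor is block \emph{upper}-triangular --- its lower-left block vanishes --- not lower-triangular; the determinant computation is unaffected.) For the splitting identity, both of your routes work. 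The block decomposition across the edge between columns $k$ and $k+1$ is the treatment closest to the classical reference: the determinant is $P_kP_{n-k}$ minus the coupling correction $b_kc_k\,P_{k-1}P_{n-k-1}=P_{k-1}P_{n-k-1}$. Your preferred route --- fix $k$, induct on $n$, and use that both sides satisfy $R(n)=y_nR(n-1)-R(n-2)$, which is available at the right end of the index range by the symmetry of Lemma~\ref{L:pnsym} --- is clean and reuses the already-proven first part, but note that this recursion is second order, so you must verify \emph{two} base cases, $n=k+1$ and $n=k+2$ (or adopt the convention $P_{-1}:=0$ so that $n=k$ serves as the second); checking $n=k+1$ alone, as written, does not pin down the sequence. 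With that small repair, either of your arguments gives the paper something it presently lacks: a self-contained verification of a lemma it only cites.
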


\begin{lemma}[Derivative] \label{L:pnder} 
	For $1 \leq k \leq n$, one has:
$$\frac{\partial }{\partial y_k}P_{n}(y_1,\ldots,y_n)= P_{k-1}(y_1,\ldots,y_{k-1})\cdot P_{n-k}(y_{k+1},\ldots,y_n).$$
\end{lemma}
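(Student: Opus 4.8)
The plan is to prove the derivative formula
$$\frac{\partial}{\partial y_k} P_n(y_1, \ldots, y_n) = P_{k-1}(y_1, \ldots, y_{k-1}) \cdot P_{n-k}(y_{k+1}, \ldots, y_n)$$
by exploiting the second (splitting) recursion from Lemma~\ref{L:pnrecur}, which already isolates the role of a cut point in the index. Setting $r = k$ in that recursion gives
$$P_n(y_1, \ldots, y_n) = P_k(y_1, \ldots, y_k) \cdot P_{n-k}(y_{k+1}, \ldots, y_n) - P_{k-1}(y_1, \ldots, y_{k-1}) \cdot P_{n-k-1}(y_{k+2}, \ldots, y_n).$$
The crucial observation is that the variable $y_k$ appears only in the factor $P_k(y_1, \ldots, y_k)$ on the right-hand side; the factors $P_{n-k}(y_{k+1}, \ldots, y_n)$, $P_{k-1}(y_1, \ldots, y_{k-1})$, and $P_{n-k-1}(y_{k+2}, \ldots, y_n)$ are all free of $y_k$ (one must just check the index bounds, which hold for $1 \le k \le n$). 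Hence differentiating with respect to $y_k$ kills the second term entirely and reduces the problem to computing $\frac{\partial}{\partial y_k} P_k(y_1, \ldots, y_k)$.

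The next step is to show that $\frac{\partial}{\partial y_k} P_k(y_1, \ldots, y_k) = P_{k-1}(y_1, \ldots, y_{k-1})$. This is precisely the special case of the desired formula in which the differentiation variable is the \emph{last} one, and it follows directly from the first recursion in Lemma~\ref{L:pnrecur}: writing the symmetric form $P_k(y_1, \ldots, y_k) = y_k P_{k-1}(y_1, \ldots, y_{k-1}) - P_{k-2}(y_1, \ldots, y_{k-2})$ (using Lemma~\ref{L:pnsym} to apply the recursion at the last index rather than the first), the variable $y_k$ appears linearly and only in the displayed factor $y_k P_{k-1}(y_1, \ldots, y_{k-1})$, so the derivative is exactly $P_{k-1}(y_1, \ldots, y_{k-1})$. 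Substituting this back into the differentiated splitting recursion yields
$$\frac{\partial}{\partial y_k} P_n(y_1, \ldots, y_n) = P_{k-1}(y_1, \ldots, y_{k-1}) \cdot P_{n-k}(y_{k+1}, \ldots, y_n),$$
which is the claim.

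Alternatively, and perhaps more cleanly, one can argue by induction on $n$ using only the first recursion together with the symmetry lemma. The base cases $n = 1$ (where $\partial_{y_1} P_1 = 1 = P_0 \cdot P_0$) and $n = 2$ are immediate from Example~\ref{Ex:continuants}. For the inductive step one differentiates $P_n(y_1, \ldots, y_n) = y_1 P_{n-1}(y_2, \ldots, y_n) - P_{n-2}(y_3, \ldots, y_n)$, splitting into the case $k = 1$ (handled directly, since only the $y_1$ prefactor depends on $y_1$, and $P_{n-2}$ is independent of $y_1$) and the case $k \ge 2$, where one applies the inductive hypothesis to $\partial_{y_k} P_{n-1}(y_2, \ldots, y_n)$ and to $\partial_{y_k} P_{n-2}(y_3, \ldots, y_n)$ and then collapses the resulting expression using the splitting recursion in reverse. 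I expect the main obstacle to be purely bookkeeping: keeping the index shifts straight and correctly tracking which factors contain $y_k$ in each branch, especially near the boundary values $k = 1$, $k = n$, and $k = n-1$ where some continuant collapses to $P_0 = 1$ or the empty argument list. The approach via the splitting recursion avoids most of this bookkeeping and is the route I would pursue first.
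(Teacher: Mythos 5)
Your proposal is correct. Note, however, that the paper does not actually prove Lemma~\ref{L:pnder}: both the recursion and the derivative formula are quoted as classical facts with a pointer to Muir's treatise on determinants, so any proof you give is by construction a different route. Your main argument is a clean, self-contained derivation from the paper's own toolkit: the splitting recursion of Lemma~\ref{L:pnrecur} with cut at $r=k$ isolates $y_k$ in the single factor $P_k(y_1,\ldots,y_k)$, and the symmetric form $P_k(y_1,\ldots,y_k)=y_kP_{k-1}(y_1,\ldots,y_{k-1})-P_{k-2}(y_1,\ldots,y_{k-2})$ (via Lemma~\ref{L:pnsym}) shows $\partial_{y_k}P_k=P_{k-1}$, which assembles into the claim. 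One small inaccuracy to fix: the splitting recursion is stated only for cuts $1\le r\le n-1$, so your remark that "the index bounds hold for $1\le k\le n$" is not quite right --- the case $k=n$ cannot be fed through the splitting identity (it would involve $P_{-1}$). This costs you nothing, since the $k=n$ case of the lemma is exactly your second step ($\partial_{y_n}P_n=P_{n-1}=P_{n-1}\cdot P_0$), but the case split should be made explicit rather than waved at. An alternative one-line proof, closer in spirit to Muir, is to differentiate the determinant: expanding the tridiagonal determinant along the $k$-th row shows that the cofactor of the diagonal entry $y_k$ is exactly the product of the two complementary tridiagonal blocks, i.e.\ $P_{k-1}(y_1,\ldots,y_{k-1})\cdot P_{n-k}(y_{k+1},\ldots,y_n)$; your route trades this determinantal observation for the recursions already recorded in the paper, which is arguably preferable for self-containedness.
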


From the description of the continuant of Example \ref{Ex:continuants} it is straightforward to verify that the terms of order $\leq 2$ of $P_n(y_1, \ldots, y_n)$, written $P_{n}(y_1, \ldots, y_n)_{\leq 2}$,  depend on $n \mod 4$ and are of the following form:
\begin{lemma}\label{pnquad}
	We have
\begin{align*}
P_{4k+1}(y_1,\ldots,y_{4k+1})_{\leq 2} & =y_1+y_3+\cdots+y_{4k+1} \ , \\
P_{4k+2}(y_1,\ldots,y_{4k+2})_{\leq 2} & =-1+y_1y_2+y_1y_4+\cdots+y_1y_{4k+2}+ y_3 y_4+ \cdots+y_{4k+1}y_{4k+2} \\
P_{4k+3}(y_1,\ldots,y_{4k+3})_{\leq 2} & =-y_1-y_3-\cdots-y_{4k+3} \\
P_{4k+4}(y_1,\ldots,y_{4k+4})_{\leq 2} & =1-y_1y_2-y_1y_4-\cdots-y_1y_{4k+4}- y_3 y_4-\cdots-y_{4k+3}y_{4k+4}
\end{align*}
\end{lemma}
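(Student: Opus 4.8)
The plan is to prove Lemma~\ref{pnquad} by establishing the four formulas for the low-degree parts $P_n(y_1,\ldots,y_n)_{\leq 2}$ simultaneously, organized by the residue $n \bmod 4$. The cleanest route is induction using the three-term recursion of Lemma~\ref{L:pnrecur}, namely $P_n = y_1 P_{n-1}(y_2,\ldots,y_n) - P_{n-2}(y_3,\ldots,y_n)$, and tracking only terms of degree at most $2$ on both sides. First I would verify the base cases directly from Example~\ref{Ex:continuants}: $P_0 = 1$, $P_1(y_1)=y_1$, $P_2 = y_1 y_2 - 1$, and $P_3 = y_1 y_2 y_3 - y_1 - y_3$, whose low-degree truncations match the claimed formulas for $n \equiv 0,1,2,3 \bmod 4$ respectively.

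For the inductive step I would fix $n \geq 4$ and assume the formulas hold for $P_{n-1}$ and $P_{n-2}$ (on the appropriately shifted variable sets). The key observation is that in $y_1 \cdot P_{n-1}(y_2,\ldots,y_n)$, multiplying by $y_1$ raises degree by one, so only the constant and linear parts of $P_{n-1}$ contribute to $(\,\cdot\,)_{\leq 2}$; meanwhile $P_{n-2}(y_3,\ldots,y_n)$ contributes its full degree-$\leq 2$ part. Concretely, writing $P_{n-1}(y_2,\ldots,y_n)_{\leq 2} = c + L + (\text{quadratic})$ where $c$ is the constant and $L$ the linear part, one gets
\[
P_n(y_1,\ldots,y_n)_{\leq 2} = \big( c\, y_1 + y_1 L \big)_{\leq 2} - P_{n-2}(y_3,\ldots,y_n)_{\leq 2},
\]
and here $y_1 L$ is already purely quadratic while $c y_1$ is linear, so no truncation is lost in the first summand. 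I would then run this through each of the four residue classes: for instance, when $n \equiv 2 \bmod 4$ so that $n-1 \equiv 1$ and $n-2 \equiv 0$, the inductive data are $P_{n-1}(y_2,\ldots,y_n)_{\leq 2} = y_2 + y_4 + \cdots + y_n$ (constant part $c=0$, linear part $L$) and $P_{n-2}(y_3,\ldots,y_n)_{\leq 2} = 1$, giving $P_n{}_{\leq 2} = y_1(y_2+y_4+\cdots+y_n) - 1$, which matches the stated form $-1 + y_1 y_2 + y_1 y_4 + \cdots + y_1 y_n + \cdots$ once one checks the remaining cross terms $y_3 y_4 + \cdots$ come from the $c=0$ case correctly. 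The other three residues are handled by the same bookkeeping.

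Alternatively, and perhaps more transparently, I would use the combinatorial description from Example~\ref{Ex:continuants}: every monomial of $P_n$ arises from $y_1 \cdots y_n$ by replacing some collection of disjoint consecutive pairs $y_i y_{i+1}$ by $-1$. A monomial has degree $d$ exactly when $n-d$ of the indices have been absorbed into such pairs, i.e.\ when $(n-d)/2$ disjoint consecutive pairs are chosen; this forces $n \equiv d \bmod 2$. The degree-$0$ term exists only when $n$ is even (pair up all of $1,\ldots,n$ in the unique way $\{12\}\{34\}\cdots$, contributing $(-1)^{n/2}$), the degree-$1$ terms exist only when $n$ is odd and correspond to leaving a single $y_j$ with $j$ odd unpaired, and the degree-$2$ terms correspond to leaving exactly two indices unpaired. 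Enumerating which pairs $(y_i, y_j)$ survive as a degree-$2$ monomial — and with which sign $(-1)^{(n-2)/2}$ — reproduces the four displayed formulas. I expect the main obstacle to be purely clerical rather than conceptual: keeping the signs and the exact index ranges (the asymmetry between the $y_1 y_{2i}$ family and the $y_{2i-1}y_{2i}$ family in the even cases) correct across all four residue classes. I would therefore favor presenting the induction, where the signs propagate automatically through the recursion, and use the combinatorial picture only as a sanity check on the base cases and on the precise list of surviving quadratic monomials.
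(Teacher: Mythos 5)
Your proposal is correct, but it is worth noting that the paper does not actually write out a proof at all: the lemma is prefaced by the remark that the formulas are ``straightforward to verify'' from the description in Example~\ref{Ex:continuants}, i.e.\ the paper's implicit argument is exactly your second, combinatorial route (constant term only for $n$ even, from the unique tiling $\{1,2\}\{3,4\}\cdots$ with sign $(-1)^{n/2}$; linear terms only for $n$ odd with $j$ odd unpaired, sign $(-1)^{(n-1)/2}$; quadratic terms $y_iy_j$ with $i$ odd, $j$ even, sign $(-1)^{(n-2)/2}$). Your primary route, the induction via the recursion $P_n = y_1P_{n-1}(y_2,\ldots,y_n) - P_{n-2}(y_3,\ldots,y_n)$ of Lemma~\ref{L:pnrecur}, is a genuinely different and perfectly viable alternative: it trades the case analysis of tilings for a four-case degree bookkeeping in which the signs propagate automatically, at the cost of having to carry all four residue classes through the induction simultaneously. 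One correction to your worked case $n \equiv 2 \bmod 4$: there you wrote $P_{n-2}(y_3,\ldots,y_n)_{\leq 2} = 1$, but by the induction hypothesis (residue $0$) it equals $1 - \left( y_3y_4 + y_3y_6 + \cdots + y_{n-1}y_n \right)$, and it is precisely this quadratic part, entering with the minus sign from the recursion, that produces the cross terms $y_3y_4 + \cdots + y_{4k+1}y_{4k+2}$ in the claimed formula; they do not come from the linear part $y_1L$, so the hedge ``come from the $c=0$ case'' should be replaced by this observation. With that fixed, both of your arguments are complete, and either would serve as a written-out proof of what the paper leaves as an exercise.
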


As a preparation for the \change{remainder of the} article,
we study the singularities of the varieties determined by the continuants and deformations of them.

\begin{lemma}
	\phantomsection  
	\label{L:Sing(Pn)}
	\begin{enumerate}[leftmargin=*,label=(\arabic*)]
		\item 
		The variety $ \Spec(K[y_1, \ldots, y_n]/ \langle P_n \rangle) \subseteq \A^n_K$ is {regular} 
		for every $ n \in \ZZ_+ $,

		\item 
		The variety $ X_{2m+1,\lambda} := \Spec(K[y_1, \ldots, y_{2m+1}]/ \langle P_{2m+1} + \lambda \rangle) $ is {regular}
		for every $ m \in \ZZ_{\geq 0} $.

		\item We have 
			$ X_{2m+1,\lambda} \cong X_{2m+1,\mu} $ for every $ \lambda, \mu \in K \setminus \{ 0 \} $.
	\end{enumerate}
\end{lemma}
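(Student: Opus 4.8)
The plan is to handle the three assertions separately. For (1) and (2) I will use the Jacobian criterion for regularity in Zariski's form \cite{Zar47}: the singular locus of a hypersurface $V(g) \subseteq \A^n_K$ is cut out by $g$ together with all partials $\partial g/\partial y_k$, and since every coefficient of $P_n$ lies in $\ZZ$ no $p$-basis contributions enter the Jacobian matrix. For (3) I will exhibit an explicit rescaling automorphism. Throughout I abbreviate $P_{[i,j]} := P_{j-i+1}(y_i,\dots,y_j)$, with the convention $P_{[i,i-1]} := P_0 = 1$, so that Lemma~\ref{L:pnrecur} reads $P_{[1,n]} = y_1 P_{[2,n]} - P_{[3,n]}$ and, after applying the symmetry of Lemma~\ref{L:pnsym} to each factor, also $P_{[1,n]} = y_n P_{[1,n-1]} - P_{[1,n-2]}$; the derivative formula of Lemma~\ref{L:pnder} reads $\partial P_{[1,n]}/\partial y_k = P_{[1,k-1]}\,P_{[k+1,n]}$. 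I expect the genuine obstacle to lie in part (2); parts (1) and (3) are short once the right identity, respectively the right scaling, is found.

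For part (1) the key is the classical unimodular continuant identity
\[ P_{[1,n-1]}\,P_{[2,n]} - P_{[1,n]}\,P_{[2,n-1]} = 1, \]
which I would prove by a one-line induction on $n$ from Lemma~\ref{L:pnrecur} (substituting $P_{[1,n]} = y_n P_{[1,n-1]} - P_{[1,n-2]}$ and $P_{[2,n]} = y_n P_{[2,n-1]} - P_{[2,n-2]}$ collapses the left-hand side to its value at $n-1$; it is also recorded in \cite{Muir}). Since $P_{[1,n-1]} = \partial P_n/\partial y_n$ and $P_{[2,n]} = \partial P_n/\partial y_1$ by Lemma~\ref{L:pnder}, while $P_{[1,n]} = P_n$, this identity exhibits $1$ as a member of the ideal $\langle P_n,\ \partial P_n/\partial y_1,\ \dots,\ \partial P_n/\partial y_n\rangle$. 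Thus the singular locus of $V(P_n)$ is empty and $\Spec(K[y_1,\dots,y_n]/\langle P_n\rangle)$ is regular for every $n$.

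For part (2), the case $\lambda = 0$ is (1) with $n = 2m+1$, so I assume $\lambda \neq 0$ and induct on $m$, the reduction lowering $m$ by $2$; the base cases $m = 0$ (where $P_1 = y_1$) and $m = 1$ (where $\partial_{y_2} P_3 = y_1 y_3 = 0$ forces $y_1 = 0$ or $y_3 = 0$, each contradicting another vanishing partial) are immediate. Suppose $X_{2m+1,\lambda}$ had a singular point, so at it $P_{[1,n]} = -\lambda$ and every $\partial P_n/\partial y_k$ vanishes, where $n = 2m+1$. From $\partial P_n/\partial y_1 = P_{[2,n]} = 0$ and $P_{[1,n]} = y_1 P_{[2,n]} - P_{[3,n]}$ I get $P_{[3,n]} = \lambda$, whence $\partial P_n/\partial y_2 = y_1 P_{[3,n]} = \lambda y_1 = 0$ forces $y_1 = 0$; symmetrically $y_n = 0$. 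With $y_1 = y_n = 0$, the recursion from each end gives, for $3 \leq k \leq n-2$, $P_{[1,k-1]} = -P_{[3,k-1]}$ and $P_{[k+1,n]} = -P_{[k+1,n-2]}$, so that
\[ \partial P_n/\partial y_k = P_{[1,k-1]}P_{[k+1,n]} = P_{[3,k-1]}P_{[k+1,n-2]} = \partial P_{[3,n-2]}/\partial y_k \]
by Lemma~\ref{L:pnder} applied to the inner continuant $P_{[3,n-2]}$; moreover $P_{[3,n-2]} = -\lambda$ follows from $P_{[3,n]} = \lambda$ and $y_n = 0$. Hence $(y_3,\dots,y_{n-2})$ is a singular point of $X_{2(m-2)+1,\lambda} = V(P_{[3,n-2]} + \lambda)$, contradicting the inductive hypothesis. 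The delicate point, and the main obstacle, is precisely this last bookkeeping: forcing $y_1 = y_n = 0$ and then recognizing that the restricted gradient of $P_n$ coincides exactly with the gradient of the smaller continuant.

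For part (3), let $\lambda,\mu \in K \setminus \{0\}$ and set $s := \mu/\lambda \neq 0$. By the pair-cancellation description of Example~\ref{Ex:continuants}, every monomial of $P_{2m+1}$ has the form $\pm\prod_{i \in S} y_i$, where the complement of $S$ in $\{1,\dots,2m+1\}$ is a disjoint union of consecutive pairs $\{i,i+1\}$. The linear automorphism $\varphi$ of $K[y_1,\dots,y_{2m+1}]$ sending $y_i \mapsto s\,y_i$ for $i$ odd and $y_i \mapsto s^{-1} y_i$ for $i$ even multiplies such a monomial by $\prod_{i \in S} s^{\pm 1}$; deleting a consecutive pair removes the balanced factor $s\cdot s^{-1} = 1$, so this product equals $s$ for every admissible $S$ (there being $m+1$ odd against $m$ even indices). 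Therefore $\varphi(P_{2m+1}) = s\,P_{2m+1}$, and
\[ \varphi(P_{2m+1} + \mu) = s\,P_{2m+1} + \mu = s\big(P_{2m+1} + \mu/s\big) = s\big(P_{2m+1} + \lambda\big). \]
Since $s$ is a unit in $K$, the automorphism $\varphi$ descends to an isomorphism $X_{2m+1,\mu} \xrightarrow{\ \sim\ } X_{2m+1,\lambda}$, which proves (3).
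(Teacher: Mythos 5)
Your proposal is correct, and for parts (1) and (3) it coincides in substance with the paper's argument: the unimodular identity $P_{n-1}(y_1,\ldots,y_{n-1})\,P_{n-1}(y_2,\ldots,y_n) - P_n\, P_{n-2}(y_2,\ldots,y_{n-1}) = 1$ you invoke is exactly the explicit B\'ezout certificate behind the paper's one-line claim that $\langle P_n, \partial P_n/\partial y_1 \rangle = \langle 1 \rangle$ (your induction bottoms out at $n=2$; $n=1$ is trivial), and your rescaling $y_i \mapsto s\,y_i$ for $i$ odd, $y_i \mapsto s^{-1}y_i$ for $i$ even with $s = \mu/\lambda$ is the same weighted scaling the paper uses, except that the paper routes through the normal form $X_{2m+1,\lambda} \cong X_{2m+1,1}$ while you go directly from $\mu$ to $\lambda$. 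Part (2) is where you genuinely diverge. The paper fixes $n = 2m+1$ and sweeps inward from the left, proving by induction on $k$ the three statements (a$_k$) $y_1 = \cdots = y_{2k-1} = 0$, (b$_k$) $P_{n-2k}(y_{2k+1},\ldots,y_n) = (-1)^{k+1}\lambda$, (c$_k$) $P_{n-(2k-1)}(y_{2k},\ldots,y_n) = 0$, until the terminal case (c$_{m+1}$) yields the contradiction $0 = P_0 = 1$. You instead run a strong induction on $m$ with step $2$: symmetry (Lemma~\ref{L:pnsym}) forces $y_1 = y_n = 0$, the truncated-gradient identity $\partial P_n/\partial y_k = \partial P_{n-4}(y_3,\ldots,y_{n-2})/\partial y_k$ for $3 \leq k \leq n-2$ then exhibits $(y_3,\ldots,y_{n-2})$ as a singular point of $X_{2(m-2)+1,\lambda}$ with the same $\lambda$, contradicting the inductive hypothesis; I checked the sign bookkeeping ($P_{n-2}(y_3,\ldots,y_n) = \lambda$, hence $P_{n-4}(y_3,\ldots,y_{n-2}) = -\lambda$, hence $P_{n-4} + \lambda = 0$) and it is right, as are your base cases $m = 0,1$. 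Both arguments rest on the same tools (Lemmas~\ref{L:pnrecur} and~\ref{L:pnder} and $\lambda \neq 0$), so this is a reorganization rather than a new idea, but the trade-off is real: your two-sided descent is shorter and self-contained for odd $n$, whereas the paper's heavier one-sided bookkeeping is a deliberate investment — the identical (a$_k$),(b$_k$),(c$_k$) induction is reused verbatim in the proof of Proposition~\ref{Prop:SingPn} for even $n$, where the singular locus is nonempty and must be located exactly at the origin rather than merely shown empty; your contradiction-by-descent would need reworking to deliver that, since for even $n$ the descent terminates at a small case that is genuinely singular.
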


\begin{proof}
	(1)
	A straightforward induction using Lemma~\ref{L:pnrecur} shows that $ \langle P_n, \frac{\partial P_n}{\partial y_1} \rangle = \langle  1 \rangle $. 
	Thus, the Jacobian criterion implies the first claim.

	(2)
	The case $ \lambda = 0 $ follows from (1). 
		Hence, we assume $ \lambda \neq 0  $.
	Set $ n := 2m + 1 $ and $ Q_n := P_{n} + \lambda $.
	We compute the singular locus of $ Q_n $ using the Jacobian criterion.
	Notice that $ \frac{\partial Q_n}{\partial y_i} = \frac{\partial P_n}{\partial y_i} $ for all $ i \in \{ 1, \ldots, n \} $.  
	By the recursion of Lemma~\ref{L:pnrecur},  
	the vanishing of $ Q_n $ and 
	$\frac{\partial P_n}{\partial y_1} $ imply that we must have
	$
	P _{n-1} (y_2, \ldots, y_n) = 0
	$ 
	and  
	$	
	P_{n-2} (y_3, \ldots, y_n) = \lambda
	$.
	Computing the partial derivative $\frac{\partial P_n}{\partial y_2}$ (using Lemma~\ref{L:pnder})
	consequently yields $ y_1 = 0 $, 
	since $\lambda \neq 0$. 
	\\
	We prove by induction on $ k $ that we have
	\begin{enumerate} 
		\item[(a$_k$)] 
		$ y_1 = \cdots = y_{2k-1} = 0 $,
		
		\item[(b$_k$)] 
		$ P_{n-2k}(y_{2k+1}, \ldots, y_n) = (-1)^{k+1} \lambda $,
		and 
		
		\item[(c$_k$)] 
		$ P_{n-(2k-1)}(y_{2k}, \ldots, y_n) =  0 $.
	\end{enumerate}
	As we have just seen, all properties are true for $ k = 1 $. 
	Thus, let us discuss how to obtain (a$_{k+1}$), (b$_{k+1}$), (c$_{k+1}$) from (a$_{k}$), (b$_{k}$), (c$_{k}$).
	First, (a$_{k}$) implies $ P_{2k} (y_1, \ldots, y_{2k}) =  (-1)^{k}  $ by Lemma~\ref{pnquad}.
	\change{Therefore, we get (using Lemma~\ref{L:pnder})
	\[
	0 = \frac{\partial P_n}{\partial y_{2k+1}}
	= P_{2k}(y_1, \ldots, y_{2k}) \cdot P_{n-(2k+1)} (y_{2k+2}, \ldots, y_n)
	=  (-1)^k  P_{n-(2k+1)} (y_{2k+2}, \ldots, y_n)
	\ .
	\]
	This implies (c$_{k+1}$), i.e., $ P_{n-(2k+1)} (y_{2k+2},\ldots, y_n) =  0 $.} 
	\\
	Using (b$_k$) and Lemma~\ref{L:pnrecur}, we obtain 
	\[
	(-1)^{k+1} \lambda = P_{n-2k}(y_{2k+1}, \ldots, y_n) =
	- P_{n-(2k+2)} (y_{2k+3},\ldots, y_n)   
	\ ,
	\]
	or, in other words, (b$_{k+1} $) holds.
	\\
	\change{
	It remains to show (a$_{k+1}$). 
	Since (a$_k$) holds, we only have to prove $ y_{2k} = y_{2k+1} = 0 $. 
	The first recursion of Lemma~\ref{L:pnrecur} applied for $ P_{n-(2k-1)} (y_{2k},\ldots, y_n) $, (b$_{k}$), (c$_{k}$), and (c$_{k+1}$) provide
	$
	0 = P_{n-(2k+1)} (y_{2k+2},\ldots, y_n) = (-1)^{k+1} y_{2k} \cdot \lambda.
	$
	Since $ \lambda \neq  0 $,
	we get} 
	\[ 
		\change{y_{2k} = 0 .}
	\]
	Lemma~\ref{pnquad}, (a$_k$), and \change{$ y_{2k} = 0 $} lead to
	$ P_{2k+1} (y_1, \ldots, y_{2k+1}) = (-1)^k y_{2k+1} $
	and therefore, by Lemma~\ref{L:pnder}, 
	we have 
	\[
	0 = \frac{\partial P_n}{\partial y_{2k+2}}
	= P_{2k+1}(y_1, \ldots, y_{2k+1}) \cdot P_{n-(2k+2)} (y_{2k+3}, \ldots, y_n)
	=   \lambda y_{2k+1} 		
	\ .
	\]
	Since $ \lambda \neq 0 $, assertion (a$_{k+1}$) follows.	
	\\
	In particular, we get for (c$_{m+1}$): 
		$ 0 = P_{n-(2m+1)} = P_0  = 1 $,
		which is impossible
		and hence implies $ \Sing (X_{2m+1,\lambda}) = \varnothing $.

	(3)
	For the third part, it is sufficient to prove $ X_{2m+1,\lambda} \cong X_{2m+1,1} $ for every $ \lambda \neq 0 $.
	Since all terms appearing in $ P_{2m+1} $ are obtained from $ y_1 \cdots y_{2m+1} $ by replacing every pair of consecutive $ y_i $ by $ -1 $ (Example~\ref{Ex:continuants}),
	we have
	\[  
	P_{2m+1}( \lambda y_1, \lambda^{-1} y_2, \ldots,
	\lambda  y_{2i-1}, \lambda^{-1} y_{2i},  \ldots,  \lambda y_{2m+1}) 
	= 
	\lambda P_{2m+1}( y_1, y_2, y_3, \ldots, y_{2m}, y_{2m+1})
	\ , 
	\]
	which implies $ X_{2m+1,\lambda} \cong X_{2m+1,1} $.
\end{proof}

By Lemma~\ref{pnquad}, it is clear that $ P_{4k+2} + 1 $ and $ P_{4k} - 1 $ are singular at the origin. 
Hence, the analog of Lemma~\ref{L:Sing(Pn)}(2) is not true for $ n = 2m $ and singularities appear. 

\begin{proposition} 
	\label{Prop:SingPn}
	Let $ \lambda \in K \setminus \{ 0 \} $, $ m \in \ZZ_+ $,
	and
	$ X_{2m,\lambda} := \Spec(K[y_1, \ldots, y_{2m}]/ \langle P_{2m} + \lambda \rangle) $.
	We have
	\begin{equation}
	\label{Sing:Qn}
		\Sing ( X_{2m,\lambda})  =
		\begin{cases}
		V(y_1, \ldots, y_{2m}), & \mbox{if } \lambda = (-1)^{m+1},
		\\
		\varnothing, & \mbox{else.}
		\end{cases}
	\end{equation}
	In particular, $ X_{2m,\lambda} $ has at most an isolated singularity at the origin. 
	If $ \Sing(X_{2m,\lambda}) \neq \varnothing $, then
	$X_{2m,\lambda} $ has a singularity of type $ A_1 $ at the origin. 
	Therefore, blowing up the origin resolves the singularities of $X_{2m,\lambda} $. 
\end{proposition}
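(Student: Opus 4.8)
The plan is to compute $\Sing(X_{2m,\lambda})$ via the Jacobian criterion, imitating the strategy used in the proof of Lemma~\ref{L:Sing(Pn)}(2), but accounting for the fact that the even-order continuant has a nonzero constant term. Set $n := 2m$ and $Q_n := P_n + \lambda$, so that $\frac{\partial Q_n}{\partial y_i} = \frac{\partial P_n}{\partial y_i}$ for all $i$. As before, I would run the same inductive argument establishing the analogues of (a$_k$), (b$_k$), (c$_k$): using the recursion of Lemma~\ref{L:pnrecur}, the derivative formula of Lemma~\ref{L:pnder}, and the explicit low-degree terms of Lemma~\ref{pnquad}, one forces $y_1 = \cdots = y_{2k-1} = 0$ and tracks the surviving continuant evaluated at $\lambda$ (up to sign). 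The key difference from the odd case is where the induction terminates. In the odd case the chain ran out at $P_0 = 1 \neq 0$, yielding the empty singular locus; here the parity is shifted, so the induction should instead terminate at $P_0 = 1$ appearing on the side of a condition of the form $P_0 = (-1)^{m+1}\lambda$, which is satisfiable precisely when $\lambda = (-1)^{m+1}$. This is exactly how the case distinction in \eqref{Sing:Qn} arises.

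Concretely, I expect the bookkeeping to show that a singular point must have all $y_i = 0$, so that the only candidate singular point is the origin, and that the origin actually lies on $X_{2m,\lambda}$ (equivalently is singular) if and only if $P_{2m}(0,\ldots,0) + \lambda = 0$. Since $P_{2m}(0,\ldots,0)$ is the constant term of the continuant, Lemma~\ref{pnquad} gives $P_{4k}(0,\ldots,0) = 1$ and $P_{4k+2}(0,\ldots,0) = -1$, i.e.\ $P_{2m}(0,\ldots,0) = (-1)^m$. Hence the vanishing condition reads $\lambda = -(-1)^m = (-1)^{m+1}$, matching the claimed dichotomy. For $\lambda \neq (-1)^{m+1}$ the origin is not even a point of the variety (or is a regular point), so $\Sing(X_{2m,\lambda}) = \varnothing$; for $\lambda = (-1)^{m+1}$ the singular locus is exactly $V(y_1,\ldots,y_{2m})$, the origin.

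For the type of the singularity, I would invoke Lemma~\ref{pnquad} directly: in the singular case $\lambda = (-1)^{m+1}$, the quadratic part $P_{2m}(y_1,\ldots,y_{2m})_{\leq 2}$ (after cancelling the constant term against $\lambda$) is, up to sign, a sum of the $m$ products $y_1 y_2, y_3 y_4, \ldots$ together with the extra cross terms $y_1 y_4, \ldots, y_1 y_{2m}$. After a linear (or affine) change of coordinates absorbing those extra terms, the quadratic part becomes a nondegenerate quadratic form of rank $2m$, that is, a sum $\pm(\widetilde{y}_1\widetilde{y}_2 + \cdots + \widetilde{y}_{2m-1}\widetilde{y}_{2m})$. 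Comparing with the definition of type $A_1$ in even embedding dimension given in the introduction, this identifies the singularity at the origin as an isolated hypersurface singularity of type $A_1$. The final assertion, that blowing up the origin resolves it, then follows from the standard fact that blowing up the singular point of an ordinary double point (nondegenerate quadratic tangent cone) yields a regular variety; alternatively it is the content already implicit in the earlier $A_1$ discussion.

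The main obstacle I anticipate is the bookkeeping around the parity shift: I must make sure the signs in (b$_k$) track correctly so that the terminal step produces the condition $\lambda = (-1)^{m+1}$ rather than an off-by-one sign, and I must verify that the change of variables straightening the quadratic part really is regular and does not interfere with the higher-order terms in a way that would alter the singularity type. Handling the boundary/small cases (especially $m=1$, where $P_2 = y_1 y_2 - 1$ and the statement is immediate) is a useful sanity check that the general argument specializes correctly.
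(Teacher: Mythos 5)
Your computation of the singular locus is essentially the paper's own argument: you run the same induction (a$_k$), (b$_k$), (c$_k$) from the proof of Lemma~\ref{L:Sing(Pn)}(2), the parity shift terminates the chain at $P_0=1$, and the dichotomy $\lambda = (-1)^{m+1}$ is read off from the constant term $P_{2m}(0,\ldots,0)=(-1)^m$ (equivalently from (b$_m$)). This part is correct and matches the paper.

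The gap is in the classification step. The paper's definition of a type $A_1$ singularity requires the \emph{complete} local ring to be isomorphic to that of the split quadric, i.e.\ you must eliminate \emph{all} higher-order terms, not merely identify the quadratic part. Your argument stops at ``the quadratic part is a nondegenerate form of rank $2m$,'' and you yourself flag, without resolving it, that the higher-order terms might ``interfere.'' Passing from a nondegenerate quadratic part to type $A_1$ is a Morse/splitting-lemma (finite determinacy) statement, and since the proposition is asserted over an arbitrary field of \emph{arbitrary characteristic} --- the paper stresses this immediately after the proof --- you cannot complete squares or invoke the characteristic-zero Morse lemma; in characteristic $2$ this step genuinely needs an argument. (It can be closed: the split form $q=\sum_{i=1}^m t_{2i-1}t_{2i}$ has Jacobian ideal equal to the full maximal ideal, so a determinacy argument works in every characteristic --- but you neither state nor prove this.) The paper avoids the issue entirely by an \emph{exact} coordinate change exploiting the structure of the continuant: using Example~\ref{Ex:continuants} it groups
\[
(-1)^{m+1}Q_{2m} \;=\; y_1\Big(y_2+\sum_{\ell=2}^m y_{2\ell}\Big)
+\sum_{k=1}^{m-1} y_{2k+1}\Big(y_{2k+2}(-1)^kP_{2k}(y_1,\ldots,y_{2k})+\sum_{\ell=k+2}^m y_{2\ell}\Big)
\]
and substitutes $t_{2k+2}:=y_{2k+2}(-1)^kP_{2k}(y_1,\ldots,y_{2k})+\sum_{\ell= k+2}^{m} y_{2\ell}$ and $t_{2k+1}:=y_{2k+1}$, which are legitimate local coordinates because the even continuants $P_{2k}$ are units at the origin; this turns $(-1)^{m+1}Q_{2m}$ into exactly $\sum_{i=1}^m t_{2i-1}t_{2i}$, with no higher-order remainder and no appeal to any determinacy theorem. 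A minor further slip in your write-up: the quadratic part of $\pm Q_{2m}$ is $\sum_{i\leq j} y_{2i-1}y_{2j}$, so the extra cross terms include $y_3y_6,\ldots$, not only those involving $y_1$; your linear normalization still works (e.g.\ via $\tilde{y}_{2j-1}:=y_1+y_3+\cdots+y_{2j-1}$), but the form should be recorded correctly.
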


\begin{proof} 
	Set $ n := 2m $ and $ Q_n := P_{n} + \lambda $.
	We compute the singular locus of $ Q_n $ using the Jacobian criterion.
	As in the proof of Lemma~\ref{L:Sing(Pn)}(2), we
	prove by induction on $ k $ that we have
	\begin{enumerate} 
		\item[(a$_k$)] 
		$ y_1 = \cdots = y_{2k-1} = 0 $,
		
		\item[(b$_k$)] 
		$ P_{n-2k}(y_{2k+1}, \ldots, y_n) = (-1)^{k+1} \lambda $,
		and 
		
		\item[(c$_k$)] 
		$ P_{n-(2k-1)}(y_{2k}, \ldots, y_n) =  0 $.
	\end{enumerate}
	In particular, we get $ y_1 = \cdots = y_{n} = 0 $ by (a$_{m}$) and (c$_{m}$).
	In conclusion, we have 
	\[  
		\Sing(V(Q_n)) = V (Q_n, y_1, \ldots,  y_{n}) 
		= V ( (-1)^m + \lambda , y_1, \ldots,  y_{n})) 
		\ ,
	\] 
	which implies our claim on the singular locus, \eqref{Sing:Qn}. 
	
	Finally, let us classify the isolated singularity at the origin if $ \lambda = (-1)^{m+1} $. 
	We show that there is a coordinate transformation $ \change{( y_1, \ldots, y_{2m})} \mapsto (t_1, \ldots, t_{2m})$ 
	after localizing at the maximal ideal corresponding to the origin, such that 
	\[
		(-1)^{m+1}  Q_{2m}(t_1, \ldots, t_{2m}) = \sum_{i=1}^{m} t_{2i-1}t_{2i} 
		\ ,
	\]
	which implies that $ X_{2m,\lambda} $ has an $ A_1 $-singularity at the origin. 
	By Example \ref{Ex:continuants} we can write down all terms of the continuant explicitly. Note that $ Q_{2m}(y_1, \ldots, y_{2m}) $ only has terms of even order $\geq 2$:
	\begin{align*}
	(-1)^{m+1} Q_{2m}  = 
	& \, 
	y_1 y_2 + \cdots + y_1 y_{2m} + y_3 y_4 + \cdots + y_3 y_{2m} + y_5 y_{2m} + \cdots + y_{2m-1} y_{2m} - \\
	& \, 
	- y_1 y_2 y_3 y_4 - \cdots - y_1 y_2 y_{2m-1} y_{2m} - \cdots 
	- y_{2m-3} y_{2m-2} y_{2m-1} y_{2m} + \cdots + \\
	& \,
	+ (-1)^{m+1} y_1 y_2 \cdots y_{2m}
	\ .
	\end{align*}
	This can be written as 
	\begin{align*}
	(-1)^{m+1}  Q_{2m} = 
	& \,
	y_1 y_2
	+ \sum_{k=1}^{m-1} y_{2k+1} y_{2k+2} (-1)^k P_{2k}(y_1,\ldots, y_{2k}) +  \\
	& 
	\,
	+ y_1 y_4 + \cdots + y_1 y_{2m} + y_3 y_6 + \cdots + y_3 y_{2m} + \cdots + y_{2m-3} y_{2m} \change{=}
	\\
	\change{=}
	&
	\change{
		y_1 (y_2 + \sum_{\ell=2}^m y_{2 \ell} )
		+ 
		\sum_{k=1}^{m-1} y_{2k+1} \Big(  y_{2k+2} (-1)^k P_{2k}(y_1,\ldots, y_{2k})
		+ \sum_{\ell = k+2 }^m y_{2\ell} 
		\Big) 
	}
	\ . 
	\end{align*}
	By Lemma \ref{pnquad} the even continuants $P_{2k}$ yield locally around the origin units, so we may substitute for $k=0, \ldots, m-1$: 
	\change{$ t_{2k+2} := y_{2k+2} (-1)^k P_{2k}(y_1,\ldots, y_{2k})
		+ \sum_{\ell = k+2 }^m y_{2\ell} $.
	By further introducing $ t_{2m} := y_{2m} $ and $ t_{2k+1} := y_{2k+1} $ for $ k = 0, \ldots, m-1 $,
	we obtain that $ (-1)^{m+1}  Q_{2m} $ is of the desired form.}
%	\out{ 
%	$ t_{2k+1} := y_{2k+1} (-1)^k P_{2k}(y_1,\ldots, y_{2k}) = y_{2k+1} u_{2k+1}$ 
%	with units $ u_{2k+1} \in K[y_1, \ldots, y_{2m}]_{\langle y_1, \ldots, y_{2m} \rangle } $. 
%	Thus, we have
%	\begin{align*}
%	(-1)^{m+1}  Q_{2m} = 
%	& \,
%	t_1 (y_2 - u_1^{-1}y_4 - \cdots - u_1^{-1} y_{2m}) + 
%	\sum_{k=1}^{m-1} t_{2k+1} \Big(
%	y_{2k+2} - u_{2k+1}^{-1} \sum_{\ell = k+2 }^m \change{y_{2\ell}}
%	\Big)
%	\ .
%	\end{align*}
%	Substitute stepwise 
%	$t_{2k+2} := y_{2k+2} - u_{2k+1}^{-1} \sum_{\ell = k+2 }^m \change{y_{2\ell}} $, 
%	for $k=0, \ldots, m-2$ and $ t_{2m} := y_{2m}$. 
%	This yields $ (-1)^{m+1}  Q_{2m} $ of the desired form.} 
\end{proof}

Observe that the statement and the proof of Proposition~\ref{Prop:SingPn} are independent of the characteristic $ p = \car(K) \geq 0 $ of the field. 
Nonetheless, the characteristic plays a role when it comes to the condition $ \lambda = (-1)^{m+1} $.
For example, if $ \lambda = 1 $ and $ m = 2k $, for some $ k \in \ZZ_+ $,
then
$ P_{4k} + 1 $ is {regular} if $ p \neq 2 $ since $ (-1)^{2k+1} = -1 \neq 1 $,
while it is singular if $ p = 2 $.

\bigskip

\section{Singularities of finite type cluster algebras coming from quivers}
\label{sec:Quiver}

\subsection{$\boldsymbol{A_n}$ cluster algebras}

Assume that $Q$ is a simply laced Dynkin diagram of type $A_n$ with any orientation. Since all trees with the same underlying undirected graph are mutation equivalent (Lemma~\ref{Lem:tree_equiv}), we may choose the following orientation:

\begin{center} 
\begin{tikzpicture}[->,>=stealth',shorten >=1pt,auto,node distance=2cm, thick,main node/.style={circle,draw}]
\node[main node] (1) {$1$};
\node[main node] (2) [right of=1] {$2$};
\node (3) [right of=2] {$\cdots$};
\node[main node] (n) [right of=3] {$n $};
		
\path
(1) edge (2) 
(2) edge (3) 
(3) edge (n);
(2) edge (n);
\end{tikzpicture}
\end{center} 

Recall that we denote by $ \cA(A_n) $ the corresponding cluster algebra.

\begin{lemma} \label{L:presentationAn}
The cluster algebra $\cA(A_n) $ is isomorphic to $ K[z_1, \ldots, z_{n+1}]/\langle f_n \rangle $ with
 \[
\begin{array}{rl}  
f_n(z_1,\ldots,z_{n+1}) 
:= & 
\hspace{-6pt}
P_{n+1}(z_1, \ldots, z_{n+1})-1 \ .
\end{array} 
\]
Here, $P_{n+1}$ is the continuant polynomial defined in Section~\ref{S:continuants}. 
In particular, the variety $ \Spec(\cA(A_n)) $ is isomorphic to a hypersurface in $ \mathbb{A}_K^{n+1} $.
\end{lemma}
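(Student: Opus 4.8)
The plan is to start from the acyclic presentation of $\cA(A_n)$ as a quotient of a polynomial ring in $2n$ variables, and then to eliminate all but $n+1$ generators in a triangular fashion, so that a single defining equation survives which the continuant recursion identifies with $P_{n+1}-1$. First, since the chosen orientation $1\to 2\to\cdots\to n$ is acyclic, Theorem~\ref{Thm:clus=low} gives $\cA(A_n)=\cL(\x,Q)$, and Lemma~\ref{L:BFZ_Cor1.17} presents this as $K[x_1,\dots,x_n,y_1,\dots,y_n]/J$, where $y_k:=x_k'$ and $J$ is generated \emph{precisely} by the $n$ exchange relations. Writing these out for the linear orientation (as in Example~\ref{Ex:A_3}), they are
\[
 r_1 = x_1y_1-x_2-1,\qquad r_k=x_ky_k-x_{k-1}-x_{k+1}\ (2\le k\le n-1),\qquad r_n=x_ny_n-x_{n-1}-1.
\]
The fact that $J$ has exactly these generators, and no hidden ones, is exactly what acyclicity buys us via Lemma~\ref{L:BFZ_Cor1.17}; this is the structural input that makes the whole computation work.

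Second, I would eliminate $x_2,\dots,x_n$ one at a time. For each $k$ with $1\le k\le n-1$ the relation $r_k$ contains $x_{k+1}$ linearly with unit coefficient and otherwise involves only $x_1,\dots,x_k$ and the $y_j$; hence it expresses $x_{k+1}$ as a polynomial in the remaining variables. Applying the standard elimination isomorphism $S[t]/\langle t-g\rangle\cong S$ (with $g\in S$) successively for $k=1,\dots,n-1$ yields
\[
 K[x_1,\dots,x_n,y_1,\dots,y_n]/J \;\cong\; K[x_1,y_1,\dots,y_n]/\langle \bar r_n\rangle,
\]
where $\bar r_n$ is obtained from $r_n$ by back-substituting these expressions. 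The triangular shape of the $r_k$ (each introduces a \emph{new} eliminated variable $x_{k+1}$ that does not occur in $r_1,\dots,r_{k-1}$) guarantees that after eliminating $x_2,\dots,x_n$ exactly the one relation $\bar r_n$ remains.

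Third, to compute $\bar r_n$ I claim that after the substitutions one has $x_k\equiv P_k(x_1,y_1,\dots,y_{k-1})$ modulo $\langle r_1,\dots,r_{k-1}\rangle$. This follows by induction on $k$: the base cases $x_1=P_1(x_1)$ and $x_2=x_1y_1-1=P_2(x_1,y_1)$ are immediate, and the inductive step uses $x_{k+1}=x_ky_k-x_{k-1}$ together with the reversed form of the recursion (combining Lemma~\ref{L:pnsym} and Lemma~\ref{L:pnrecur}),
\[
 P_{k+1}(z_1,\dots,z_{k+1})=z_{k+1}\,P_k(z_1,\dots,z_k)-P_{k-1}(z_1,\dots,z_{k-1}).
\]
Substituting $x_{n-1}=P_{n-1}(x_1,y_1,\dots,y_{n-2})$ and $x_n=P_n(x_1,y_1,\dots,y_{n-1})$ into $r_n$ and applying this recursion once more with $z_{n+1}=y_n$ gives $\bar r_n = P_{n+1}(x_1,y_1,\dots,y_n)-1$. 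Relabelling $z_1:=x_1$ and $z_{i+1}:=y_i$ produces the stated presentation $K[z_1,\dots,z_{n+1}]/\langle P_{n+1}-1\rangle$; since this is one equation in $n+1$ variables, $\Spec(\cA(A_n))$ is a hypersurface in $\A_K^{n+1}$.

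I expect the only genuinely substantive point to be confirming that the elimination is an honest ring isomorphism, i.e.\ that using $r_1,\dots,r_{n-1}$ to remove $x_2,\dots,x_n$ leaves \emph{exactly} the single relation $\bar r_n$ with no surviving relations among the kept variables; everything else (the index bookkeeping in the elimination and the induction identifying $x_k$ with $P_k$) is routine. As noted, the triangular structure of the exchange relations resolves this point cleanly, so no real obstacle arises.
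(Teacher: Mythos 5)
Your proposal is correct and follows essentially the same route as the paper: invoke acyclicity (via Theorem~\ref{Thm:clus=low} and Lemma~\ref{L:BFZ_Cor1.17}) to get the presentation by the $n$ exchange relations, eliminate $x_2,\dots,x_n$ stepwise, and identify $x_k = P_k(x_1,y_1,\dots,y_{k-1})$ by induction using the recursion of Lemma~\ref{L:pnrecur} combined with the symmetry of Lemma~\ref{L:pnsym}, so that the last relation becomes $P_{n+1}-1$. The only difference is presentational: you make explicit the elimination isomorphism $S[t]/\langle t-g\rangle\cong S$ and the triangular structure justifying that exactly one relation survives, which the paper leaves implicit.
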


This result can also be found in \cite[Corollary~4.2]{Dupont}.
We provide a simpler and shorter proof. 

\begin{proof}
By Theorem~\ref{Thm:clus=low}, $\cA(A_n) $ is isomorphic to a quotient $K[x_1, \ldots, x_n,y_1, \ldots, y_n]/I$, where the ideal $I$ is generated by 
\[  
	x_1y_1-x_2-1,
	\ \  
	x_2y_2-x_1-x_3, 
	\ \  
	x_3y_3-x_2-x_4, 
	\ \ 
	\ldots,
	\ \ 
	x_{k}y_k-x_{k-1}-x_{k+1}, 
	\ \ 
	\ldots 
\]
\[ 
	\ldots, 
	\ \  
	x_{n-1}y_{n-1}-x_{n-2}-x_n, 
	\ \  
	x_ny_n-x_{n-1}-1 \ . 
\]
For $i \geq 2$ one can stepwise express each $x_i$ in terms of $x_1, y_1, \ldots, y_n$: The first equation shows that $x_2=x_1y_1-1=P_2(x_1, y_1)$. Substituting into the second equation yields
$$x_3=x_2y_2-x_1=P_2(x_1, y_1)y_2-P_1(x_1) \ , $$
which is by Lemma \ref{L:pnrecur} and Lemma \ref{L:pnsym} equal to $P_3(x_1, y_1, y_2)$. Recursively, we obtain for $k=2, \ldots, n$:
$$x_k=P_k(x_1, y_1, \ldots, y_{k-1}) \ . $$
Thus, the last generator $f_n:=x_ny_n-x_{n-1}-1$ becomes 
$$P_n(x_1, y_1, \ldots, y_{n-1})y_n - P_{n-1}(x_1, \ldots, y_{n-2})-1=P_{n+1}(x_1, y_1, \ldots, y_n)-1 \ .$$
In conclusion, we have  $ K[x_1, \ldots, x_n, y_1, \ldots, y_n]/I \cong K[z_1, \ldots, z_{n+1}]/\langle f_n \rangle $, 
where the generator on the right hand side is 
$f_n(z_1, \ldots, z_{n+1})=P_{n+1}(z_1, \ldots,z_{n+1})-1$. 
\end{proof}

\begin{Bem}
	Observe that the technique of the proof of Lemma~\ref{L:presentationAn} to reduce the number of generators using continuant polynomials can be applied for any quiver $ Q = ( Q_0, Q_1 ) $, 
	which contains a string of $ n $ vertices such that one them is a sink or a source.  
	More generally, if $ i \in Q_0 $ is a vertex of $ Q $ such that 
	$ \#\{ j \mid (i,j) \in Q_1 \} = 1 $ or $ \#\{ j \mid (j,i) \in Q_1 \} = 1 $,
	then the exchange relation at $ i $ is of the form
	$ x_i x_i' = x_k + \prod_{j \rightarrow i } x_j $
	or $ x_i x_i' = \prod_{j \leftarrow i } x_j  + x_k  $,
	for a unique vertex $ k \in Q_0 $,
	and hence $ x_k $ can be eliminated.

\end{Bem}

Lemma~\ref{L:presentationAn}, Lemma~\ref{L:Sing(Pn)}, and Proposition~\ref{Prop:SingPn} immediately imply Theorem~\ref{Thm:A} in the $ A_n $-case,
where it states:

\begin{corollary}
	\label{Cor:An} 
Let $\cA(A_n)$ be the cluster algebra of type $A_n$ over a field $ K $.
\\
If $\cchar(K) \neq 2$, then we have:
\begin{enumerate}[leftmargin=*,label=(\arabic*)]
\item $ \Spec(\cA(A_n)) $ is singular if and only if $n \equiv 3 \mod 4$. 
\item If $ n = 4m-1 $, for some $ m \in \ZZ_{> 0 } $, then $ \Spec(\cA(A_{4m-1})) $ is isomorphic to an isolated hypersurface singularity of type $A_1$ in $\A^{n+1}_K$. 
In particular, the resolution of singularities of $ \Spec(\cA(A_{4m-1})) $ is given by the blowup of the singular point.

\end{enumerate}
On the other hand, if $\cchar(K)=2$, then we have:  
\begin{enumerate}[leftmargin=*,label=(\arabic*')]
\item $ \Spec(\cA(A_n)) $ is singular if and only if $n \equiv 1 \mod 2$. 
\item If $n = 2m - 1 $, for some $ m \in \ZZ_{> 0 } $, then $ \Spec(\cA(A_{2m-1})) $ is isomorphic to an isolated hypersurface singularity of type $A_1$. 
In particular, the resolution of singularities of $ \Spec(\cA(A_{2m-1})) $ is given by the blowup of the singular point.
\end{enumerate}
\end{corollary}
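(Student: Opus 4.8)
The plan is to reduce everything to the continuant results already established, since the hypersurface presentation does all the work. By Lemma~\ref{L:presentationAn} the variety $\Spec(\cA(A_n))$ is the hypersurface $V(f_n) \subseteq \A_K^{n+1}$ with $f_n = P_{n+1}(z_1,\ldots,z_{n+1}) - 1$. The key observation is that $f_n$ is exactly of the form $P_{n+1} + \lambda$ with $\lambda = -1 \in K \setminus \{0\}$, so the analysis of the singular locus is handled verbatim by Lemma~\ref{L:Sing(Pn)} and Proposition~\ref{Prop:SingPn} applied to the continuant of order $N := n+1$; no new Jacobian computation is needed, and the singular point corresponds to the origin $V(z_1,\ldots,z_{n+1})$.

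First I would split according to the parity of $N = n+1$. If $N$ is odd, i.e.\ $n$ is even, then $N = 2m+1$ and Lemma~\ref{L:Sing(Pn)}(2) gives that $X_{N,\lambda}$ is regular for every $\lambda \neq 0$; in particular $\Spec(\cA(A_n))$ is regular, independently of the characteristic (since $-1 \neq 0$ in any field). If $N$ is even, i.e.\ $n$ is odd, write $N = 2m$ with $m = (n+1)/2$ and invoke Proposition~\ref{Prop:SingPn}: the hypersurface $X_{2m,\lambda}$ is singular precisely when $\lambda = (-1)^{m+1}$, and in that case it has an isolated $A_1$-singularity at the origin, resolved by the blowup of that point.

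It then remains to unwind the condition $\lambda = -1 = (-1)^{m+1}$ into the stated congruences, and this is the only place the characteristic enters. If $\car(K) \neq 2$, then $-1 \neq 1$, so $-1 = (-1)^{m+1}$ forces $m$ even; writing $m = 2\ell$ gives $n = N - 1 = 4\ell - 1$, i.e.\ $n \equiv 3 \bmod 4$, which yields (1), while the $A_1$-classification and the blowup statement of (2) come directly from Proposition~\ref{Prop:SingPn} (here $n = 4m-1$ corresponds to $N = 4m$, so in the notation of the proposition the index is $2m$ and $(-1)^{2m+1} = -1 = \lambda$). If $\car(K) = 2$, then $-1 = 1$ and $(-1)^{m+1} = 1 = \lambda$ holds automatically for every $m$, so $X_{2m,\lambda}$ is always singular; hence $\Spec(\cA(A_n))$ is singular for every odd $n$, giving (1'), and the $A_1$-type together with the resolution by blowing up the origin of (2') again follow from Proposition~\ref{Prop:SingPn}.

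There is essentially no conceptual obstacle left once the continuant results are in place: the argument is pure bookkeeping. The one point that genuinely requires care — and that, as remarked after the statement of Theorem~\ref{Thm:A}, was overlooked in \cite{MRZ2018} — is the collapse $-1 = 1$ in characteristic $2$. This is exactly why the regular range $n \equiv 0,1,2 \bmod 4$ valid for $p \neq 2$ shrinks to the even residues, and why every odd $n$ becomes singular when $p = 2$. I would therefore keep the characteristic distinction explicit throughout the translation of $\lambda = (-1)^{m+1}$, rather than treating it as a routine sign manipulation.
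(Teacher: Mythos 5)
Your proposal is correct and follows exactly the paper's own argument: the paper likewise deduces the corollary immediately from Lemma~\ref{L:presentationAn} (the presentation $f_n = P_{n+1}-1$) combined with Lemma~\ref{L:Sing(Pn)} and Proposition~\ref{Prop:SingPn}, with the congruence conditions arising from unwinding $\lambda = -1 = (-1)^{m+1}$ in the two characteristics. Your write-up merely makes explicit the parity bookkeeping that the paper leaves to the reader, including the characteristic-$2$ collapse $-1=1$ that the paper flags as the point missed in earlier work.
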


\medskip

\subsection{$\boldsymbol{D_n}$ cluster algebras}

Next, we consider the quiver $ Q $, whose underlying graph is a simply laced Dynkin diagram of type $ D_n $, for some $ n \geq 4 $. 
Since all orientations on a tree are mutation equivalent (Lemma~\ref{Lem:tree_equiv}), 
we choose the following orientation and numbering of the vertices for $ Q $:
\begin{center} 
	\begin{tikzpicture}[->,>=stealth',shorten >=1pt,auto,node distance=2cm, thick,main node/.style={circle,draw,white}]
	
	\node[main node] (1) {1 1};	
	\node[main node] (2) [right of=1] {1 1};
	\node[main node] (3) [right of=2] {1 1}; 
	\node[main node] (4) [right of=3] {1 1}; 
	\node[main node] (5) [right of=4] {1 1};
	\node[main node] (6) [above right of=5] {1 1};
	\node[main node] (7) [below right of=5] {1 1};

	\node at (1) {\small $1$};
	\node at (2) {\small $2$};
	\node at (3) {\small $\cdots$}; 
	\node at (4) {\small $n-3$};
	\node at (5) {\small $n-2$};
	\node at (6) {\small $n-1$};
	\node at (7) {\small $n$};

	\draw (1) circle (13.5pt);
	\draw (2) circle (13.5pt);
	\draw (4) circle (13.5pt);
	\draw (5) circle (13.5pt);
	\draw (6) circle (13.5pt);
	\draw (7) circle (13.5pt);

	\path
	(1) edge (2) 
	(2) edge (3)
	(3) edge (4)
	(4) edge (5)
	(5) edge (7) 
	(5) edge (6);
	\end{tikzpicture}
\end{center} 
The corresponding cluster algebra,
which we denote by $ \cA(D_n) $, 
coincides with the lower cluster algebra of $ Q $ 
(Theorem~\ref{Thm:clus=low})
and the latter is completely described by its exchange relations
by Lemma~\ref{L:BFZ_Cor1.17}.

\begin{lemma}
	\label{Lem:pres_Dn}
	The cluster algebra
	$ \cA(D_n) $ 
	is isomorphic to 
	\[   
	K[u_1, u_2, u_3, u_4, z_1, \ldots, z_{n-2}]/ \langle h_1, h_2 \rangle ,
	\]
	where
	\[
	\begin{array}{ll}
	h_1 := 
	u_1 u_2 - u_3 u_4 - u_1 u_2 u_3 u_4 - u_2 u_4 P_{n-3} (z_1, \ldots, z_{n-3}),
	\\[5pt]
	h_2 := 
	u_3 u_4 - P_{n-2} (z_1, \ldots, z_{n-2}) - 1 .
	\end{array}
	\] 
	In particular, the variety $ \Spec(\cA(D_n)) $ is isomorphic to a subvariety of $ \A^{n+2}_K $ of codimension $ 2 $.
	(As before, $ P_{n-2} $ an $  P_{n-3}  $ are the continuant polynomials discussed in Section~\ref{S:continuants}.) 
\end{lemma}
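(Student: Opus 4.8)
The plan is to mirror the strategy of Lemma~\ref{L:presentationAn}: first present $\cA(D_n)$ through its exchange relations, then eliminate the ``chain'' variables using continuants, and finally perform a tailored change of variables that absorbs the ``fork'' at the branch vertex into the two relations $h_1,h_2$.

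First I would invoke Theorem~\ref{Thm:clus=low} (the chosen orientation of $D_n$ is acyclic) together with Lemma~\ref{L:BFZ_Cor1.17} to write $\cA(D_n)=\cL(D_n)$ as $K[x_1,\dots,x_n,y_1,\dots,y_n]$ modulo the ideal generated by the $n$ exchange relations. For the displayed orientation these are the $A$-type chain relations
\[
x_1y_1=x_2+1,\qquad x_ky_k=x_{k-1}+x_{k+1}\ \ (2\le k\le n-3),
\]
together with the three relations at the branch vertex $n-2$ and its two leaves,
\[
x_{n-2}y_{n-2}=x_{n-3}+x_{n-1}x_n,\qquad x_{n-1}y_{n-1}=x_{n-2}+1,\qquad x_ny_n=x_{n-2}+1 .
\]
Exactly as in Lemma~\ref{L:presentationAn}, the chain relations let me eliminate $x_2,\dots,x_{n-2}$ one after another, and the recursion and symmetry of the continuants (Lemmas~\ref{L:pnrecur} and~\ref{L:pnsym}) identify $x_k=P_k(x_1,y_1,\dots,y_{k-1})$. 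Writing $z:=(x_1,y_1,\dots,y_{n-3})$, this gives $x_{n-3}=P_{n-3}(z_1,\dots,z_{n-3})$ and $x_{n-2}=P_{n-2}(z_1,\dots,z_{n-2})$. Substituting into the three fork relations presents $\cA(D_n)$ as $K[z_1,\dots,z_{n-2},y_{n-2},x_{n-1},y_{n-1},x_n,y_n]$ modulo
\[
P_{n-2}\,y_{n-2}-P_{n-3}-x_{n-1}x_n,\qquad x_{n-1}y_{n-1}-P_{n-2}-1,\qquad x_ny_n-P_{n-2}-1 .
\]

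The second leaf relation is already of the desired shape: setting $u_3:=x_{n-1}$, $u_4:=y_{n-1}$ turns it into $h_2=u_3u_4-P_{n-2}-1$, so that modulo $h_2$ one has $P_{n-2}+1=u_3u_4$ and $P_{n-2}=u_3u_4-1$. It then remains to fold the remaining two fork relations (the branch relation and $x_ny_n=P_{n-2}+1$) into the single relation $h_1$ while eliminating one of the variables $y_{n-2},x_n,y_n$. Here I would search for the correct combinations $u_1,u_2$ of these variables, substitute $P_{n-2}=u_3u_4-1$ into the branch relation, and then check that $h_1$ and $h_2$ generate the full relation ideal.

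The main obstacle is precisely this last reduction. In contrast to the $A_n$ case, no fork variable can be eliminated directly: in the branch relation the variable $y_{n-2}$ carries the coefficient $P_{n-2}$, which is not a unit, and in the two leaf relations the branch variables occur only inside products. Concretely, on the generic fibre over $z$ the fork locus is a two-dimensional torus, cut out (after eliminating the determined variable $y_{n-2}$) by $x_{n-1}y_{n-1}=x_ny_n=P_{n-2}+1$, and this torus does embed as a codimension-two complete intersection in $\A^4_K$; but realizing the embedding by \emph{global} generators $u_1,u_2,u_3,u_4$ and matching it with $h_1,h_2$ requires an explicit, non-obvious change of coordinates rather than a naive elimination. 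Once such $u_i$ are produced, I would finish by verifying that the induced map is an isomorphism onto $\cA(D_n)$ --- for instance by confirming that $h_1,h_2$ form a regular sequence cutting out a variety of the correct dimension $n$, that the images of the $u_i$ generate, and that the map admits a polynomial inverse, so that the two relation ideals agree.
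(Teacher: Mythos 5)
Your setup coincides with the paper's: acyclicity of the chosen orientation (Theorem~\ref{Thm:clus=low}, Lemma~\ref{L:BFZ_Cor1.17}), elimination of $x_2,\dots,x_{n-2}$ via the continuant recursion, and the identification $u_3:=x_{n-1}$, $u_4:=y_{n-1}$ giving $h_2=u_3u_4-P_{n-2}-1$. But your proposal stops exactly where the actual proof begins: you write that you ``would search for the correct combinations $u_1,u_2$'' and that the reduction ``requires an explicit, non-obvious change of coordinates'', without producing it. That is a genuine gap --- the content of the lemma \emph{is} that explicit reduction, and nothing in your text shows that a two-generator presentation exists, let alone that it has the stated form (the term $-u_1u_2u_3u_4-u_2u_4P_{n-3}$ in $h_1$ cannot be guessed from the shape of the fork relations alone).

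The paper's resolution consists of two moves you did not find. First, replace the leaf relation $g_1=x_ny_n-x_{n-2}-1$ by the difference $g_1':=g_1-g_2=x_ny_n-x_{n-1}y_{n-1}$, which removes $P_{n-2}$ from one generator. Second --- and this is the key --- add $y_{n-2}\,g_2$ to the branch relation $g_3=P_{n-2}y_{n-2}-x_{n-1}x_n-P_{n-3}$: the terms $P_{n-2}y_{n-2}$ cancel, leaving
\[
g_3+y_{n-2}g_2\;=\;-\,x_{n-1}\bigl(x_n-y_{n-2}y_{n-1}\bigr)\;-\;y_{n-2}\;-\;P_{n-3}\,.
\]
Setting $u_1:=x_n-y_{n-2}y_{n-1}$ (a triangular, hence invertible, change of variables) and $u_2:=y_n$, this element of the ideal is linear in $y_{n-2}$ with unit coefficient $-1$, so one eliminates $y_{n-2}=-u_1u_3-P_{n-3}$. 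This answers exactly your objection that ``no fork variable can be eliminated directly'' because $y_{n-2}$ carries the non-unit coefficient $P_{n-2}$: the offending coefficient is cancelled by adding a multiple of $g_2$, not by inverting it. Substituting into $g_1'=u_1u_2-u_3u_4+y_{n-2}u_2u_4$ yields precisely $h_1=u_1u_2-u_3u_4-u_1u_2u_3u_4-u_2u_4P_{n-3}$, and the presentation by $\langle h_1,h_2\rangle$ follows. Note also that your closing checklist (regular sequence, polynomial inverse) is unnecessary once the argument is organized this way: every step is either an invertible coordinate change of the ambient polynomial ring or the elimination of a variable occurring linearly with unit coefficient, so the isomorphism is automatic.
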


\begin{proof}
	As mentioned before, we have
	$ 
	\cA(D_n)
	\cong  
	K [x_1, \ldots, x_n, y_1, \ldots, y_n ]/ I,
	$  
	where $ I $ is the ideal generated by 
	\begin{eqnarray}
	\label{eq:Dn_gone}
	x_1y_1-x_2-1, 
	&
	x_k y_k - x_{k+1} - x_{k-1}, \mbox{ for } k \in \{  2, \ldots, n-3\}
	\ ,  
	\end{eqnarray}
	\begin{equation}
	\label{eq:Dn_stay}
	g_3 := x_{n-2} y_{n-2} - x_{n-1} x_n - x_{n-3}, 
	\ \
	g_2 := x_{n-1} y_{n-1} - x_{n-2} - 1, 
	\ \
	g_1 := x_n y_n - x_{n-2} - 1 
	\ .
	\end{equation}
	As in the proof of Lemma~\ref{L:presentationAn}, we obtain from \eqref{eq:Dn_gone}
	$
	x_k = P_k (x_1, y_1, \ldots, y_{k-1}) $,
	for all $ k \in \{ 2, \ldots, n-2 \} $.
	The last generator in \eqref{eq:Dn_stay} can be replaced by 
	\[ 	
	g_1' := g_1 - g_2 = x_n y_n - x_{n-1} y_{n-1} 
	\]
	If we substitute $ x_2, \ldots, x_{n-2} $ in the remaining two generators, 
	we obtain
	\[
	g_2 
	= 
	x_{n-1} y_{n-1} - P_{n-2}(x_1, y_1, \ldots, y_{n-3}) -1
	\]
	and 
	\[
	\begin{array}{rcl}
	g_3  
	\hspace{-6pt} & = & \hspace{-6pt}
	P_{n-2}(x_1, y_1, \ldots, y_{n-3}) y_{n-2} - x_{n-1} x_n - P_{n-3}(x_1, y_1 \ldots, y_{n-4}) 
	= 
	\\[5pt]
	& = & \hspace{-6pt}
	y_{n-2} (x_{n-1} y_{n-1} - 1 - g_2 ) - P_{n-3}(x_1, y_1, \ldots, y_{n-4}) 
	- x_{n-1} x_n
	=
	\\[5pt]
	& =  & \hspace{-6pt}
	- x_{n-1} (  x_n - y_{n-2}  y_{n-1}) - y_{n-2} - P_{n-3}(x_1, y_1, \ldots, y_{n-4}) - y_{n-2} g_2 \
	. 
	\end{array} 	 	 
	\]
	We introduce 
	\[
	(u_1, u_2, u_3, u_4) := (x_n - y_{n-2} y_{n-1}, \, y_n, \, x_{n-1}, \, y_{n-1} ) 
	\]
	\[
	(z_1, z_2, \ldots, z_{n-2} ) :=  (x_1, y_1, \ldots, y_{n-3} )
	\]
	and obtain
	\[
	\begin{array}{l} 
	h_1 := \change{g_1'} = u_1 u_2 - u_3 u_4 +  y_{n-2} u_2 u_4  ,
	\\
	h_2 := g_2 = u_3 u_4 - P_{n-2} (z_1, \ldots, z_{n-2}) - 1  .
	\end{array}  
	\]
	On the other hand, $ g_3  + y_{n-2} g_2   \in I$ yields that we may eliminate 
	\[ 
	y_{n-2} = - u_1 u_3 - P_{n-3}(z_1, \ldots, z_{n-3}), 
	\]  
	which provides $ h_1 = 
	u_1 u_2 - u_3 u_4 - u_1 u_2 u_3 u_4 - u_2 u_4 P_{n-3} (z_1, \ldots, z_{n-3}) $,
	as desired. 
\end{proof}

By the previous result, $ \Spec ( \cA(D_n)) \cong V (h_1, h_2) \subset \A^{n+2}_K $. 
Using this presentation, we determine the singular locus of $ \Spec ( \cA(D_n)) $.

\begin{lemma}
	\label{Lem:SingDn}
	Let $\cA(D_n)$ be the cluster algebra of type $D_n$ over a field $ K $.
	\\
	If $\cchar(K) \neq 2$, then we have
	\[
	\Sing (\Spec ( \cA(D_n))) 
	\cong
	\begin{cases}
	Y_0 \cup Y_1 \cup Y_2 \cup Y_3 \cup Y_4
	& \text{ if } n \equiv 0  \mod 4 , \\
	Y_0 & \text{ otherwise} ,
	\end{cases} 
	\]
	where, for $ i \in \{ 1, 2, 3, 4 \} $, the component 
	$ Y_i$ is the $ u_i $-axis and so $ \dim(Y_i) = 1 $,
	while
	\[
	Y_0 := V (u_1, \ldots, u_4, \  P_{n-2} (z_1, \ldots, z_{n-2}) + 1) 
	\]
	and $ \dim (Y_0 ) = n - 3 $.
	The only possible singular point of $Y_0$ is the origin,
	\[ 
	\Sing(Y_0) 
	=
	\begin{cases}
	\bigcap\limits_{i=1}^4 Y_i 
	& \text{ if } n \equiv 0  \mod 4 \\
	\varnothing & \text{ otherwise} .
	\end{cases} 
	\]
	Observe that $ Y_0 $ has two irreducible components if $ n = 4 $ since $ P_2 (z_1,z_2) + 1 = z_1 z_2 $.
	\\
	On the other hand, if $ \cchar(K) = 2 $, the same statement holds true if we replace every condition $ n \equiv 0 \mod 4 $ by $ n \equiv 0 \mod 2 $. 
\end{lemma}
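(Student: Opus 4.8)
The plan is to determine $\Sing(\Spec(\cA(D_n)))$ directly from the presentation $V(h_1,h_2)\subset \A^{n+2}_K$ given in Lemma~\ref{Lem:pres_Dn}, by applying the Jacobian criterion to the $2\times(n+2)$ Jacobian matrix of $(h_1,h_2)$ and computing where its rank drops below $2$, intersected with $V(h_1,h_2)$. First I would record the partial derivatives of $h_1$ and $h_2$. Writing $P := P_{n-2}(z_1,\ldots,z_{n-2})$ and $\widetilde{P} := P_{n-3}(z_1,\ldots,z_{n-3})$, note that $h_2$ involves only $u_3,u_4$ and the $z_i$, so $\tfrac{\partial h_2}{\partial u_1}=\tfrac{\partial h_2}{\partial u_2}=0$, while $\tfrac{\partial h_2}{\partial u_3}=u_4$, $\tfrac{\partial h_2}{\partial u_4}=u_3$, and $\tfrac{\partial h_2}{\partial z_k}=-\tfrac{\partial P}{\partial z_k}$. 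For $h_1$ the $u$-derivatives are $\tfrac{\partial h_1}{\partial u_1}=u_2(1-u_3u_4)$, $\tfrac{\partial h_1}{\partial u_2}=u_1 - u_1u_3u_4 - u_4\widetilde{P}$, $\tfrac{\partial h_1}{\partial u_3}=-u_4 - u_1u_2u_4$, $\tfrac{\partial h_1}{\partial u_4}=-u_3 - u_1u_2u_3 - u_2\widetilde{P}$, and the $z$-derivatives are $-u_2u_4\,\tfrac{\partial \widetilde{P}}{\partial z_k}$.

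The heart of the argument is a careful case analysis on which $u_i$ and which derivatives vanish, organized by using the regularity and derivative properties of continuants (Lemma~\ref{L:Sing(Pn)}(1) and Lemma~\ref{L:pnder}) to control the $z$-variables. I would first argue that a singular point forces all four $u_i$ to vanish except along the coordinate axes $Y_i$: the key observation is that the ideal of $2\times 2$ minors of the Jacobian, together with $h_1,h_2$, typically forces most $u_i=0$. On the locus $u_1=u_2=u_3=u_4=0$, the equation $h_2=0$ reduces to $P_{n-2}(z_1,\ldots,z_{n-2})+1=0$, which is precisely the defining condition of $Y_0$; one then checks via Lemma~\ref{L:Sing(Pn)}(1) that $\langle P_{n-2},\tfrac{\partial P_{n-2}}{\partial z_1}\rangle=\langle 1\rangle$, so the $z$-derivatives of $h_2$ do not all vanish on $Y_0$, confirming that $Y_0$ has the expected dimension $n-3$ and that its component is genuinely there. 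Separately, along each $u_i$-axis $Y_i$ (where the other three $u_j$ and all $z_k$ vanish) I would verify by direct substitution that both $h_1,h_2$ vanish and the Jacobian rank drops; here the parity condition enters because $h_2=0$ along $Y_i$ requires $P_{n-2}(0,\ldots,0)=-1$, and by Lemma~\ref{pnquad} the constant term $P_{n-2}(0,\ldots,0)$ equals $(-1)^{(n-2)/2}$ when $n$ is even (and the term structure in odd cases), which is $-1$ exactly when $(n-2)/2$ is odd, i.e.\ $n\equiv 0\bmod 4$ — and in characteristic $2$ the sign collapses, giving the condition $n\equiv 0\bmod 2$ instead.

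To locate $\Sing(Y_0)$, I would treat $Y_0 = V(u_1,\ldots,u_4,\,P_{n-2}+1)$ as a variety in the $z$-coordinates and apply the Jacobian criterion to $P_{n-2}+1$ alone: its singular points are where $P_{n-2}+1=0$ and all $\tfrac{\partial P_{n-2}}{\partial z_k}=0$. Using the derivative formula $\tfrac{\partial P_{n-2}}{\partial z_k}=P_{k-1}(z_1,\ldots,z_{k-1})\cdot P_{n-2-k}(z_{k+1},\ldots,z_{n-2})$ and an inductive vanishing argument in the spirit of the proof of Proposition~\ref{Prop:SingPn}, I expect to force $z_1=\cdots=z_{n-2}=0$, so the only candidate singular point of $Y_0$ is the origin; the origin actually lies on $Y_0$ precisely when $P_{n-2}(0,\ldots,0)+1=0$, reproducing the same parity condition and showing $\Sing(Y_0)=\bigcap_{i=1}^4 Y_i$ when $n\equiv 0\bmod 4$ (resp.\ $\bmod\,2$ in characteristic $2$) and $\varnothing$ otherwise. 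The remark for $n=4$ is immediate from $P_2(z_1,z_2)+1=z_1z_2$, which visibly factors into two components.

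The main obstacle I anticipate is the bookkeeping in the rank-two Jacobian analysis: unlike the hypersurface cases, here one must understand the $2\times 2$ minors, and a naive case split on the vanishing of the $u_i$ produces many subcases. The cleanest route is probably to localize: away from the origin at least one $u_i$ is a unit, and in that chart one can solve one of $h_1,h_2$ for a variable and reduce to a situation already understood (a cylinder or hypersurface over a continuant), so that the continuant regularity results apply cleanly. Keeping track of the precise combination $1-u_3u_4$ appearing in $\tfrac{\partial h_1}{\partial u_1}$ and its interaction with $h_2 = 0$ (which says $u_3u_4 = P_{n-2}+1$) is the delicate point, since substituting $h_2$ simplifies several derivatives and is what ultimately pins down the four axes $Y_i$ as the only off-$Y_0$ contributions.
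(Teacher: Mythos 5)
Your setup coincides with the paper's: the presentation of Lemma~\ref{Lem:pres_Dn}, the Jacobian criterion applied to the $2\times(n+2)$ matrix of $(h_1,h_2)$, and the continuant results of Section~\ref{S:continuants}. The parts you actually carry out are correct: the partial derivatives, the verification that $Y_0$ and (under the right parity conditions, via the constant term $P_{n-2}(0,\ldots,0)=\pm 1$ from Lemma~\ref{pnquad}) the four coordinate axes lie in the singular locus, and the identification of $\Sing(Y_0)$ through Proposition~\ref{Prop:SingPn}. But the lemma asserts an \emph{equality}, and the hard direction --- that every singular point lies in $Y_0\cup Y_1\cup\cdots\cup Y_4$ --- is exactly the part you leave open. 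Your two suggested routes for it do not work as stated. ``The ideal of $2\times 2$ minors typically forces most $u_i=0$'' is not an argument; and the localization fallback rests on the assertion that away from the origin at least one $u_i$ is a unit, which is false: all points of $Y_0$ off the origin, and more generally any point with $u=0$ and some $z_k\neq 0$, have every $u_i=0$. Moreover, even at a point where some $u_i$ is invertible, the proposed elimination is not automatic; for instance, solving $h_1$ for $u_2$ requires $u_1(1-u_3u_4)-u_4P_{n-3}$ to be a unit, which does not follow from $u_1\neq 0$ alone.

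What closes this gap in the paper is a structured analysis of the minors rather than a chart-by-chart reduction, and these are the concrete steps missing from your proposal. The $h_2$-row of the Jacobian restricted to the $u$-columns is $(0,0,u_4,u_3)$, so every $2\times 2$ minor inside the $u$-block is a multiple of $u_3$ or $u_4$; this gives the clean dichotomy. If $u_3=u_4=0$, then $h_1=h_2=0$ force $u_1u_2=0$ and $P_{n-2}+1=0$, and the minors mixing $u$-columns with $z$-columns force either $u_1=u_2=0$ (giving $Y_0$) or, when say $u_1\neq 0$, the vanishing of all $\partial P_{n-2}/\partial z_k$, giving $Y_1=V(u_2,u_3,u_4)\cap\Sing(V(P_{n-2}+1))$, which under the parity conditions is the $u_1$-axis. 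If instead $u_3\neq 0$ or $u_4\neq 0$, the minors yield $u_2(1-u_3u_4)=0$ and $u_1(1-u_3u_4)-u_4P_{n-3}=0$; the possibility $1-u_3u_4=0$ is excluded because it would force $P_{n-3}=0$ and, via $h_2=0$, also $P_{n-2}=0$, contradicting the fact (from the proof of Lemma~\ref{L:Sing(Pn)}) that consecutive continuants have no common zeros; then $u_2=0$, hence $u_3u_4=0$ and $P_{n-2}+1=0$, and the identity $\partial P_{n-2}/\partial z_{n-2}=P_{n-3}$ (Lemma~\ref{L:pnder}) combined with the relation $u_1(1-u_3u_4)=u_4P_{n-3}$ forces $u_1=0$, landing in $Y_3$ or $Y_4$. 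Without these steps, or a fully worked-out substitute, your write-up establishes only the inclusion $\supseteq$ together with the statement about $\Sing(Y_0)$, not the lemma itself.
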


Note that Lemmas~\ref{Lem:pres_Dn} and~\ref{Lem:SingDn} for $ n = 4 $ imply Theorem~\ref{Thm:A}(4)(a).

\begin{proof}[Proof of Lemma~\ref{Lem:SingDn}]
	By Lemma~\ref{Lem:pres_Dn}, $ \Spec ( \cA(D_n)) $ is isomorphic to the subvariety of $ \A^{n+2}_K $ determined by 
	\[
	\begin{array}{l} 
	h_1 = u_1 u_2 - u_3 u_4 - u_1 u_2 u_3 u_4 - u_2 u_4 P_{n-3} (z_1, \ldots, z_{n-3})  = 0 ,
	\\
	h_2 = u_3 u_4 - P_{n-2} (z_1, \ldots, z_{n-2}) - 1  = 0 .
	\end{array}  
	\] 
	Observe that there is an ordering on the variables such that $ u_1 u_2 $ is the leading monomial of $ h_1 $
	and $ u_3 u_4 $ is the one of $ h_2 $.
	Hence, 
	\change{the dimension of $ \Spec ( \cA(D_n)) $ is equal to $ n $ and by applying the Jacobian criterion for smoothness,}
	the singular locus of $ \Spec ( \cA(D_n)) $ is determined by the vanishing of the $ 2 \times 2 $ minors of the Jacobian matrix of $ ( h_1, h_2 ) $.
	We abbreviate  
	\[  
	\Jac(D_n) := \Jac(h_1, h_2; u_1, u_2, u_3, u_4, z_1, \ldots, z_{n-2}). 
	\] 
	The first four columns of $ \Jac(D_n) $ are 
	\[
	\begin{pmatrix}
	u_2 (1-u_3 u_4) 
	& 
	u_1 (1-u_3 u_4) - u_4 P_{n-3}
	&
	-u_4 (1 + u_1 u_2)
	&
	-u_3 (1 + u_1 u_2 ) - u_2 P_{n-3} 
	\\
	0 & 0 & u_4 & u_3 
	\end{pmatrix}
	\]
	while the remaining columns are
	\begin{equation}
	\label{eq:secondmatrix}
	\begin{pmatrix}
	- u_2 u_4 \dfrac{\partial P_{n-3}}{\partial z_1}
	&
	\cdots  
	&
	- u_2 u_4 \dfrac{\partial P_{n-3}}{\partial z_{n-3}}
	& 
	0 
	\\[8pt]
	-\dfrac{\partial P_{n-2}}{\partial z_1}
	&
	\cdots 
	&
	
	-\dfrac{\partial P_{n-2}}{\partial z_{n-3}}
	&
	-\dfrac{\partial P_{n-2}}{\partial z_{n-2}}
	\end{pmatrix}
	.
	\end{equation} 
	(Here, we use the obvious abbreviations $ P_{n-2} $ and $ P_{n-3} $.)
	Clearly, the maximal minors of the first matrix are of the form $ u_3 ( \ldots ) $ and $ u_4 (\ldots ) $.
	Suppose $ u_3 = u_4 = 0 $. 
	The vanishing of $ h_1 $ and $ h_2 $ provides that for a singular point, 
	we have to have $ u_1 u_2 = 0 $ and $ P_{n-2} + 1 = 0 $.
	Further, the first row of the first matrix becomes $ \begin{pmatrix}
	u_2 & u_1 & 0 & - u_2 P_{n-3} 
	\end{pmatrix} $ 
	and every entry of
	the first row of the second matrix is zero.
	Thus, if $ u_1 = u_2 = 0 $, we obtain the irreducible component
	\[
	Y_0 = V (u_1, \ldots, u_4, P_{n-2} + 1 ) 
	\]
	in the singular locus. 
	On the other hand, suppose that $ u_1 \neq 0 $. 
	Since $ u_1 u_2 = 0 $ has to vanish, we get $ u_2 = 0 $. 
	The minors corresponding to the derivatives with respect to 
	$ ( u_2, z_i ) $, for $ i \in \{ 1, \ldots, n - 3 \} $ 
	provide that 
	\[
	\dfrac{\partial P_{n-2}}{\partial z_{1}} = \ldots = \dfrac{\partial P_{n-2}}{\partial z_{n-2}} = 0 .
	\]
	This yields the irreducible component
	$
	Y_1 = V (u_2, u_3, u_4 ) \cap \Sing (V(P_{n-2} + 1))
	$ 
	of the singular locus of $ \Spec( \cA(D_n) ) $.
	Analogously, we get 
	$
	Y_2 = V (u_1, u_3, u_4 ) \cap \Sing (V(P_{n-2} + 1))
	$ 
	if $ u_2 \neq 0 $. 
	
	Next, suppose that $ u_3 \neq 0 $ or $ u_4 \neq 0 $. 
	Then, the minors corresponding to the derivatives $ (u_1, u_4) $ and $ ( u_2, u_4 ) $
	(resp.~$ (u_1, u_3) $ and $ ( u_2, u_3 ) $)
	provide that we have to have
	\begin{equation}
	\label{eq:easy_der} 
	u_2 (1-u_3 u_4)  = 0 
	\ \ \ 
	\mbox{ and } 
	\ \ \ 
	u_1 (1-u_3 u_4) - u_4 P_{n-3} = 0 
	\end{equation} 
	for a singular point.
	If $ 1 - u_3 u_4 = 0 $, we get that $ P_{n-3}(z_1, \ldots, z_{n-3}) = 0 $. 
	On the other hand, the vanishing of $ h_2 $ yields $ P_{n-2}(z_1, \ldots, z_{n-2}) = 0 $,
	which is a contradiction as we have seen at the beginning of the proof of Lemma~\ref{L:Sing(Pn)}.
	\\
	Therefore, we get $ u_2 = 0 $ if $ u_3 \neq 0 $ or $ u_4 \neq 0 $. 
	This implies that all entries in the first row of the matrix \eqref{eq:secondmatrix} are zero. 
	Moreover, $ h_1 = h_2 = 0 $ is equivalent to $ u_3 u_4 =  P_{n-2} + 1 = 0 $.
	We have two cases:
	\begin{itemize}
		\item 
		$ u_3 = 0 $ and $ u_4 \neq 0 $.
		Then $ \dfrac{\partial h_1}{\partial u_3} = - u_4 \neq 0 $.  
		The minors corresponding to the derivatives with respect to $ (u_3, z_i) $ provide that all derivatives of $ P_{n-2} $ have to vanish.
		Since $ \dfrac{\partial P_{n-2}(z_1,\ldots, z_{n-2})}{ \partial z_{n-2}} = P_{n-3}(z_1,\ldots, z_{n-3}) $,
		the second equality of \eqref{eq:easy_der} and $ u_3 = 0 $ imply $ u_1 = 0 $. 
		Hence, we get the irreducible component 
		$
		Y_4 = V (u_1, u_2, u_3 ) \cap \Sing (V(P_{n-2} + 1))
		$ 
		
		\smallskip 
		
		\item 
		$ u_3 \neq 0 $ and $ u_4 = 0 $.
		Via the analogous arguments as in the previous case,
		we obtain the   
		irreducible component 
		$
		Y_3 = V (u_1, u_2, u_4 ) \cap \Sing (V(P_{n-2} + 1))
		$ 
		in the singular locus.
	\end{itemize} 
	Note that this covers all cases, where the minors of $ \Jac(D_n) $ vanish.
	Hence, we determined all components of the singular locus.   
	Furthermore, observe that
	\begin{equation}
	\bigcap_{i=1}^4 Y_i = \Sing (Y_0). 
	\end{equation}

	First, assume $ \cchar(K) > 2 $.
	By Lemma~\ref{L:Sing(Pn)} and Proposition~\ref{Prop:SingPn}, we have 
	\[
	\Sing( P_{n-2} (z_1, \ldots, z_{n-2} ) + 1 ) =
	\begin{cases} 
	V(z_1, \ldots, z_{n-2}) & \text{ if } n \equiv 0  \mod 4 \\
	\varnothing & \text{ otherwise} .
	\end{cases}
	\]
	This implies
	\[
	\Sing (\Spec ( \cA(D_n))) 
	\cong
	\begin{cases}
	Y_0 \cup Y_1 \cup Y_2 \cup Y_3 \cup Y_4
	& \text{ if } n \equiv 0  \mod 4 \\
	Y_0 & \text{ otherwise} ,
	\end{cases} 
	\]
	where $ \Sing(Y_0) = V (u_1, \ldots, u_4, z_1, \ldots, z_{n-2} )  = \bigcap_{i=1}^4 Y_i $  is the origin if $ n \equiv  0 \mod 4 $,
	while $ Y_0 $ is {regular} in the second case. 
	Observe that $ Y_i $ is the $ u_i $-axis and so $ \dim(Y_i) = 1 $, for $ i \in \{ 1, \ldots, 4 \} $, 
	and $ \dim (Y_0 ) = n - 3 $.	
	
	Let us turn to the case $ \cchar(K) = 2 $.
	The same arguments apply and the only difference appears, when we apply
	Proposition~\ref{Prop:SingPn}, which leads to the condition $ n \equiv 0 \mod 2 $ instead of $ n \equiv 0 \mod 4 $.
\end{proof}

As before in the $ A_n $-case, we can classify the singularities and construct a desingularization from this. 

\begin{prop}
	\label{Prop:TypeRSDn}
	Let $\cA(D_n)$ be the cluster algebra of type $D_n$ over a field $ K $.
	We use the notation of Lemma~\ref{Lem:SingDn}.
	\begin{enumerate}[leftmargin=*,label=(\arabic*)]
		\item 
		If $ \Sing (\Spec ( \cA(D_n))) \cong Y_0 $, 
		then the {variety} $ \Spec ( \cA(D_n)) $ is locally at $ Y_0 $ isomorphic to a cylinder over a hypersurface singularity of type $ A_1 $.
		In particular, the blowup with center $ Y_0 $ resolves the singularities. 
		
		\smallskip 
		
		\item 
		If $ \Sing (\Spec ( \cA(D_n))) \cong Y_0 \cup Y_1 \cup Y_2 \cup Y_3 \cup Y_4 $, then 
		we have:
		\begin{enumerate}[leftmargin=*]
			\item 
			For every $i \in \{0 , \ldots, 4 \} $,
			$ \Spec ( \cA(D_n)) $ is
			locally at a singular point different from the origin
			isomorphic to a cylinder over a hypersurface singularity of type $ A_1 $.
			
			\item 
			$ Y_0 $ is isomorphic to an $ ( n- 3 ) $-dimensional $ A_1 $-hypersurface singularity;
			
			\item 
			The singularities of $ \Spec ( \cA(D_n)) $ are resolved by first blowing up the origin and then choosing the strict transform of $ \bigcup_{i=0}^4 Y_i $ as the next center.
		\end{enumerate}	  
	\end{enumerate}
\end{prop}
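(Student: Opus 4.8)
The plan is to first put the two defining equations into a clean local normal form that is valid near the whole singular locus, and then read off both the local isomorphism types and the resolution from that model. The key first observation is that $h_1$ factors: writing $v := u_1(1 - u_3 u_4) - u_4 P_{n-3}(z_1,\dots,z_{n-3})$, one checks the identity $h_1 = u_2 v - u_3 u_4$. Since $\partial v/\partial u_1 = 1 - u_3 u_4$ is a unit at every point of $\Sing(\Spec(\cA(D_n)))$ (where all $u_i$ vanish) and in a neighbourhood of the origin, I may replace the coordinate $u_1$ by $v$. After this substitution $\Spec(\cA(D_n))$ is, locally around its singular locus, isomorphic to
\[ V(u_2 v - u_3 u_4,\ u_3 u_4 - (P_{n-2}(z)+1)). \]
All assertions then follow by analysing this model in the three relevant loci: the regular points of $V(P_{n-2}+1)$ (the generic points of $Y_0$, and, in the setting of (1), all of $Y_0$), the punctured axes $Y_i\setminus\{0\}$ for $i\ge 1$, and the origin.

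For (1) and the $Y_0$-part of (2)(a) I would localise at a point of $Y_0$ where $V(P_{n-2}+1)$ is regular; by Lemma~\ref{L:Sing(Pn)} and Proposition~\ref{Prop:SingPn} this is every point of $Y_0$ in case (1) and every point of $Y_0\setminus\{0\}$ in case (2). There $w := P_{n-2}(z)+1$ is part of a regular system of parameters, so $h_2 = u_3 u_4 - w$ eliminates $w$, leaving the single equation $u_2 v - u_3 u_4$ in $v,u_2,u_3,u_4$ together with $n-3$ free parameters. This is exactly a cylinder over the $A_1$-hypersurface $\{u_2 v - u_3 u_4 = 0\}\subset \A^4_K$ with cylinder direction $Y_0$; and as $Y_0$ is regular here, blowing it up resolves the transverse $A_1$ while being an isomorphism along the cylinder, which gives (1). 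For (2)(a) along $Y_i\setminus\{0\}$ with $i\ge 1$ I instead localise where $u_i$ is the unit (all $z$ vanish there): taking, say, $i=3$, $u_3$ is invertible, so I solve $h_1=u_2v-u_3u_4=0$ for $u_4$ and substitute into $h_2$, obtaining $u_2 v - (P_{n-2}(z)+1)$ with $u_3$ now free. By Proposition~\ref{Prop:SingPn} the quadratic part of $P_{n-2}+1$ is nondegenerate, so this is an $A_1$-hypersurface singularity with $u_3$ as cylinder direction; the cases $i=1,2,4$ are symmetric. Finally (2)(b) is immediate, since $Y_0\cong V(P_{n-2}(z)+1)$, which by Proposition~\ref{Prop:SingPn} is an $(n-3)$-dimensional $A_1$-hypersurface singularity in precisely the residue cases at hand.

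It remains to treat the origin and the resolution in (2)(c). Using Proposition~\ref{Prop:SingPn} I may, after localising, replace $P_{n-2}(z)+1$ by a nondegenerate quadratic form $Q(t)=\sum_i t_{2i-1}t_{2i}$, so near the origin the model becomes the complete intersection $V(u_2 v - u_3 u_4,\ u_3 u_4 - Q(t))$, i.e.\ the intersection of two $A_1$-cones sharing a vertex (matching Theorem~\ref{Thm:A}(4)). I then blow up the origin and compute the strict transform chart by chart. In the $u_3$-chart (put $v=u_3v'$, $u_2=u_3u_2'$, $u_4=u_3u_4'$, $t=u_3t'$) both equations drop a factor $u_3^2$ and, after eliminating $u_4'$, reduce to $u_2'v' - Q(t') = 0$ with $u_3$ free, a cylinder over $A_1$ whose singular locus is exactly the strict transform $\widetilde Y_3$; a $t$-chart likewise exhibits $\widetilde Y_0$ (where $Q$ now acquires a linear term, so one $t$-variable is eliminated) as the regular axis of a cylinder over $A_1$. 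Running through all charts shows that the strict transform is regular away from $\widetilde Y_0,\dots,\widetilde Y_4$, that these are now regular and pairwise disjoint — they met only at the origin, and meet the exceptional divisor respectively in the four distinct coordinate directions and in the quadric $\{Q=0\}$ — and that the variety is a cylinder over $A_1$ along each. Blowing up the regular center $\bigcup_{i=0}^4 \widetilde Y_i$ then removes these remaining cylinder singularities, which is the asserted two-step resolution.

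The main obstacle I expect is the origin: showing that a single blowup of the origin simultaneously resolves the ``intersection of two $A_1$'s'' and separates the five components into disjoint regular strict transforms rests on the explicit chart computations above, and one must verify that no singular point of the strict transform survives off $\bigcup_i \widetilde Y_i$. A secondary point to watch throughout is characteristic $2$: every quadratic form that appears is of hyperbolic type (a sum of products of distinct variables), so all the coordinate changes and splitting-lemma reductions go through verbatim, consistent with Proposition~\ref{Prop:SingPn} being characteristic-free.
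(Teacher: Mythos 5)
Your proposal is correct and follows essentially the same route as the paper: the substitution $v := u_1(1-u_3u_4)-u_4P_{n-3}$ turning $h_1$ into $u_2v-u_3u_4$ is exactly the paper's local variable $w_1$, the normal form $P_{n-2}+1=\sum t_{2i-1}t_{2i}$ from Proposition~\ref{Prop:SingPn} is used in the same way on each stratum of the singular locus, and the two-step resolution (origin, then strict transforms of the $Y_i$) is identical. The only difference is cosmetic: you verify (2)(c) by explicit chart computations of the blowup, where the paper deduces the same facts from the observation that $h_1,h_2$ become homogeneous of degree $2$ at the origin (and, incidentally, your parenthetical that all $u_i$ vanish on the singular locus is inaccurate on $Y_1,\dots,Y_4$, but the fact you actually need — that $u_3u_4$ vanishes there, so $1-u_3u_4$ is a unit — does hold).
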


\begin{proof}
	By Lemma~\ref{Lem:pres_Dn}, $ \Spec ( \cA(D_n)) $ is isomorphic to the subvariety of $ \A^{n+2}_K $ given by 
	\[
	\begin{array}{l} 
	h_1 = u_1 u_2 - u_3 u_4 - u_1 u_2 u_3 u_4 - u_2 u_4 P_{n-3} (z_1, \ldots, z_{n-3})  = 0 
	\ ,
	\\
	h_2 = u_3 u_4 - P_{n-2} (z_1, \ldots, z_{n-2}) - 1  = 0 
	\ .
	\end{array}  
	\]
	First, suppose $ \Sing (\Spec ( \cA(D_n))) \cong Y_0 $, where 
	\[  
	Y_0 = V ( u_1, \ldots, u_4, \  P_{n-2} (z_1, \ldots, z_{n-2}) + 1)
	\ ,
	\] 
	is {regular} and of dimension $ n - 3 $. 
	Moreover, Lemma~\ref{L:Sing(Pn)} and Proposition~\ref{Prop:SingPn} provide that $ H := \Spec(K[u_1, \ldots,u_4, z_1, \ldots, z_{n-2}]/\langle h_2 \rangle ) $ is {regular}.
	Locally at $ Y_0 $, the element $ 1 - u_3 u_4 $ is invertible
	and thus we may introduce the local variable $ w_{1} := u_1 (1 - u_3 u_4) - u_4 P_{n-3}  $.
	Using the latter, we get $ h_1 = w_1 u_2 - u_3 u_4 $ locally.
	Therefore, locally at $ Y_0 $,
	the variety $ \Spec(\cA(D_n)) $ is isomorphic to an intersection of a cylinder over an $ A_1 $-hypersurface singularity and a {regular} variety $ H $, which is transversal to the cylinder.
	In particular, the blowup of $ Y_0 $ is a desingularization of $ \Spec(\cA(D_n)) $.
	This ends the proof of part (1).

	Let us come to the case $ \Sing (\Spec ( \cA(D_n))) \cong Y_0 \cup Y_1 \cup Y_2 \cup Y_3 \cup Y_4 $.
	By Lemma~\ref{Lem:SingDn}, this can only happen if $ n \equiv 0 \mod 2 $.
	(Note that $ n \equiv 0 \mod 4 $ implies $ n \equiv 0 \mod 2 $.) 
	Here, the singular locus of $ \Spec(\cA(D_n)) $ has five components, 
	$ Y_0 $ above and the $ u_i $-axes $ Y_i $ for $ i \in \{ 1, 2, 3, 4 \} $.  
	The only singular point of $ \Sing (\Spec(\cA(D_n))) $ is the origin $ 0 $,
	which is also the singular locus of $ Y_0 $, 
	as well as the intersection of the four other components $ Y_1, \ldots, Y_4 $. 
	\\
	The same argument as above shows that, 
	locally at a singular point, which is contained in $ Y_0 \setminus \{0\} $, the {variety} $ \Spec(\cA(D_n)) $ is isomorphic to a cylinder over an $ A_1 $-singularity intersected with a {regular} hypersurface, which is transversal to the cylinder.
	
	Let us consider the other components. 
	In the proof of Proposition~\ref{Prop:SingPn}, we have seen that there is 
	a  local coordinate transformation $(z_1, \ldots, z_{n-2}) \mapsto (t_1, \ldots, t_{n-2})$, such that 
	\begin{equation}
	\label{eq:hom} 
	P_{n-2}(t_1, \ldots, t_{n-2}) +1  =\sum_{i=1}^{m}t_{2i-1}t_{2i} \ , 
	\end{equation} 
	where $ m := \frac{n-2}{2} \in \ZZ_+ $, which is an integer since $ n \equiv 0 \mod 2 $.
	
	Along the $ u_1 $-axis without the origin, $ Y_1 \setminus \{ 0 \} $,
	the term $ u_1( 1 - u_3 u_4) - u_4 P_{n-3} $  is  invertible and hence 
	we may introduce $ w_2 := h_1 = u_2 ( u_1 ( 1 - u_3 u_4) - u_4 P_{n-3}) - u_3 u_4 $ to replace $ u_2 $. 
	Thus, locally at a point of $ Y_1 \setminus \{ 0 \} $, we get that $ \Spec(\cA(D_n)) $ is isomorphic to the hypersurface
	\[
	\Spec( K [u_1, u_3, u_4, t_1, \ldots, t_{n-2}]/ \langle u_3 u_4 - \sum_{i=1}^{m}t_{2i-1}t_{2i} \rangle )
	\ ,
	\]
	which is a cylinder over an $ A_1 $-hypersurface singularity. 
	The analogous situation appears for $ Y_2 \setminus \{ 0 \} $.
	\\
	Let us consider the local situation at $ Y_3 \setminus \{ 0 \} $. 
	There, $ u_3 $ is invertible so that it makes sense to define 
	$ w_1 := u_1 u_3, w_2 := u_2 u_3^{-1}, w_4 :=  u_3 u_4 $.
	Using this, we obtain
	\[ 
	h_1 = u_1 u_2 - u_3 u_4 - u_1 u_2 u_3 u_4 - u_2 u_4 P_{n-3} = w_2 (w_1 (  1 - w_4) - w_4 P_{n-3}) - w_4 
	\ .
	\]
	Furthermore, we may introduce $ v_1 := w_1 (  1 - w_4) - w_4 P_{n-3} $ 
	so that 
	the vanishing of $ h_1 $ allows to eliminate the variable $ w_4 $ in $ h_2 = w_4 - P_{n-2} - 1 $.
	Hence, 
	locally at a point of $ Y_3 \setminus \{ 0 \} $, 
	the {variety} $ \Spec(\cA(D_n)) $ is isomorphic to
	\[
	\Spec( K [v_1, w_2, u_3, t_1, \ldots, t_{n-2}]/ \langle v_1 w_2 - \sum_{i=1}^{m}t_{2i-1}t_{2i} \rangle )
	\ .
	\]
	In other words, we are in the same situation as for $ Y_1 \setminus \{ 0 \} $.
	The analogous argument (and using Lemma~\ref{pnquad}) provide the same result for $ Y_4 \setminus \{ 0 \} $.
	Hence, we have shown $ (2)(a) $.
	
	It remains to study the situation at the origin, which is the singular locus of $ Y_0 $ 
	and also equal to $ \bigcap_{i=1}^4 Y_i $. 
	By \eqref{eq:hom}, $ Y_0 $ is isomorphic to a hypersurface singularity in $ \mathbb{A}_K^{n-2}  $ 
	of type $ A_1 $. 
	In particular, we get $ (2)(b) $
	and blowing up the origin resolves the singularities of $ Y_0 $. 
	\\
	Finally, for $ (2)(c) $, the same argument as above (for $ Y_0 $) provides that
	$ h_1 = w_1 u_2 - u_3 u_4 $ locally at the origin. 
	In particular, $ h_1 $ and $ h_2 $ are both homogeneous of degree $ 2 $. 
	This implies, if we blow up the origin, 
	then the singular locus of the strict transform of $ \Spec( \cA(D_n)) $ is equal to $ Y_0'\cup Y_1'\cup \ldots \cup Y_4' $,
	where $ Y_i' $ denotes the strict transform of $ Y_i $.
	Furthermore, for every $ i \neq j $, we have $ Y_i' \cap Y_j' = \varnothing $. 
	Therefore $ Z:= \bigcup_{i=0}^4 Y_i' $ is regular and after blowing up with center $ Z $ all singularities are resolved by $ (2)(a) $.
\end{proof}

\medskip

\subsection{$\boldsymbol{E_6, E_7, E_8} $ cluster algebras}

Let us now turn our attention to the missing \change{skew-symmetric} cluster algebras of finite type,
which are those arising from orientations on  $ E_6, E_7, E_8 $ Dynkin diagrams. 
As before, we fix a field $ K $. 

\begin{proposition}
	\label{Prop:E6_8}
	Let $\cA(E_6)$ (resp.~$\cA(E_8)$) be the cluster algebra of type $ E_6 $ (resp.~$ E_8 $) over $ K $.
	\begin{enumerate}
		\item 
		There exist presentations of $ \cA(E_6) $ and $\cA(E_8)$ of codimension three.
		
		\item
		The {varieties} $ \Spec ( \cA(E_6)) $ and $  \Spec ( \cA(E_8)) $ are {regular}. 
	\end{enumerate}
\end{proposition}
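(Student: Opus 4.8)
The plan is to mirror the treatment of the $A_n$ and $D_n$ cases: first cut the presentation down to three generators via continuant eliminations (giving part (1)), and then read off regularity from the Jacobian criterion (part (2)). Since $E_6$ and $E_8$ are trees, Lemma~\ref{Lem:tree_equiv} lets me fix any orientation, and I would orient every arm \emph{towards} the trivalent vertex $c$, so that $c$ becomes a sink and each leaf a source. The quiver is then acyclic, so Theorem~\ref{Thm:clus=low} and Lemma~\ref{L:BFZ_Cor1.17} present $\cA(E_6)$ (resp.\ $\cA(E_8)$) as $K[x_1,\dots,x_n,y_1,\dots,y_n]/I$ with $I$ generated by the $n$ exchange relations, and $\dim\Spec(\cA(E_6))=n$.

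Along each arm every non-central vertex is a source or a sink of its incident arm-arrows, so the continuant technique of Lemma~\ref{L:presentationAn} (using Lemmas~\ref{L:pnrecur} and~\ref{L:pnsym}) applies verbatim: for an arm of length $\ell$ with leaf variable $x_\ast$ one obtains $x_c = P_{\ell+1}(x_\ast,\dots)$ as a continuant in $x_\ast$ and the $y$'s of that arm. The three arms thus give three expressions for the single eliminated variable $x_c$, and the sink relation at $c$ reads $x_cy_c = x_{a_1}\cdot x_{b_1}\cdot x_{c_1}+1$ where $x_{b_1},x_{c_1}$ are again continuants. Keeping the three leaf variables and all $y$'s as coordinates (that is $n+3$ variables), eliminating $x_c$ by one of its expressions, and writing the remaining two as differences, I expect a presentation of the shape
\[
h_1 = (x_{a_1}y_{a_1}-1) - P_{\ell_B+1}(\dots), \qquad
h_2 = (x_{a_1}y_{a_1}-1) - P_{\ell_C+1}(\dots),
\]
\[
h_3 = (x_{a_1}y_{a_1}-1)\,y_c - x_{a_1}\,P_{\ell_B}(\dots)\,P_{\ell_C}(\dots) - 1,
\]
where $(\ell_B,\ell_C)=(2,2)$ for $E_6$ and $(2,4)$ for $E_8$. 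This exhibits $\Spec(\cA(E_6))$ and $\Spec(\cA(E_8))$ as codimension-three subvarieties of $\A_K^{\,n+3}$, proving part (1); it is the direct analogue of the passage from $(g_1,g_2,g_3)$ to $(h_1,h_2)$ in Lemma~\ref{Lem:pres_Dn}.

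For part (2) I would apply the Jacobian criterion in Zariski's characteristic-free form \cite{Zar47}: because all coefficients lie in $\ZZ$ no $p$-basis intervenes, and the singular locus is the common zero set of $h_1,h_2,h_3$ together with the $3\times 3$ minors of $\Jac(h_1,h_2,h_3)$. The key structural feature is that $y_c$ occurs only in $h_3$, with $\partial h_3/\partial y_c = x_{a_1}y_{a_1}-1$, and that $h_1$ (resp.\ $h_2$) only involves the $a$- and $b$-arm (resp.\ $a$- and $c$-arm) variables. Splitting on whether $x_{a_1}y_{a_1}-1$ vanishes, the vanishing of the mixed $b$/$c$ minors forces all partials of one of the arm continuants to vanish; the regularity relation $\langle P_k,\partial P_k/\partial(\text{first variable})\rangle=\langle 1\rangle$ from Lemma~\ref{L:Sing(Pn)}(1), applied through Lemma~\ref{L:pnder}, then drives each branch to a contradiction after using $h_1=h_2=h_3=0$. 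The conclusion should be $\Sing(\Spec(\cA(E_6)))=\Sing(\Spec(\cA(E_8)))=\varnothing$.

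The main obstacle is precisely this last Jacobian case analysis: keeping track of which arm variables are forced to vanish and showing that the surviving continuant conditions are incompatible. This is also where the asymmetry with $E_7$ must appear. The orders of the arm continuants are $(2;3,3)$ for $E_6$ and $(2;3,5)$ for $E_8$, all of whose odd-order pieces are rigid by Lemma~\ref{L:Sing(Pn)}(2), whereas $E_7$ has arm lengths $(1,2,3)$ and hence an \emph{even}-order continuant $P_4$ of the type that, by Proposition~\ref{Prop:SingPn} and Lemma~\ref{pnquad}, acquires a singular point exactly when $\car(K)=2$. I therefore expect the regularity argument for $E_6$ and $E_8$ to reduce to verifying that the relevant continuant configurations avoid the critical even-order case of Proposition~\ref{Prop:SingPn}, the technical heart being a careful but finite elimination rather than any new idea.
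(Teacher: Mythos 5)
Your proposal follows essentially the same route as the paper: reorient the tree via Lemma~\ref{Lem:tree_equiv}, use Theorem~\ref{Thm:clus=low} and the continuant eliminations of Lemma~\ref{L:presentationAn} to reach a codimension-three presentation, and then kill all candidate singular points by a Jacobian case analysis resting on the regularity of odd-order continuants; the paper merely chooses a different orientation (its branch vertex has in-degree one and out-degree two rather than being a sink), so its three generators $h_1,h_2,h_3$ differ from yours only cosmetically, and its case split (on $\partial h_3/\partial z$ for $z\in\{x_8,y_7,y_8\}$, forcing $x_6=0$) plays the role of your split on $x_{a_1}y_{a_1}-1$. One small precision: the input you actually need is Lemma~\ref{L:Sing(Pn)}(2) (regularity of $P_{2m+1}+\lambda$ for \emph{every} $\lambda$, i.e., the partials of an odd-order continuant have no common zero), not part (1), since after imposing $h_1=h_2=h_3=0$ the arm continuants are forced to take the value $x_{a_1}y_{a_1}-1$, which in one of your two branches is nonzero.
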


\begin{proof}
	We begin with $ E_8 $.	
	By Lemma~\ref{Lem:tree_equiv}, we may choose any orientation for the quiver, whose underlying graph is the $ E_8 $ Dynkin diagram.
	We choose: 
	\begin{center} 
		\begin{tikzpicture}[->,>=stealth',shorten >=1pt,auto,node distance=2cm, thick,main node/.style={circle,draw}]

		\node[main node] (1) {$ 1 $};	
		\node[main node] (2) [left of=1] {$ 2 $};
		\node[main node] (3) [left of=2] {$ 3 $};
		\node[main node] (4) [left of=3] {$ 4 $};
		\node[main node] (5) [left of=4] {$ 5 $};
		\node[main node] (6) [below of=5] {$ 6 $};
		\node[main node] (7) [left of=5] {$ 7 $}; 
		\node[main node] (8) [left of=7] {$ 8 $};

		\path
		(1) edge (2) 
		(2) edge (3)
		(3) edge (4)
		(4) edge (5)
		(5) edge (7)
		(5) edge (6)
		(7) edge (8);
		\end{tikzpicture}
	\end{center} 
	Using the analogous arguments as as in the $D_n$-case, we obtain
	\[
	\cA(E_8) \cong K[x_1, x_6, x_8, y_1, \ldots, y_8]
	/
	\langle h_1, h_2 , h_3 \rangle ,
	\]
	where we define
	\[
	\begin{array}{ll}
	h_1 := 
	P_5 (x_1, y_1, y_2, y_3, y_4) -  P_2 (x_6, y_6) ,
	\\[5pt] 
	h_2 := 
	P_2 (x_6, y_6)  y_5 - x_6 P_2 (x_8, y_8) - P_4(x_1, y_1, y_2, y_3), 
	\\[5pt]
	h_3 := 
	P_3 (x_8,y_8,y_7) - P_2(x_6, y_6) .
	\end{array} 
	\]
	The singular locus of $ \Spec(\cA(E_8)) $ is determined by the vanishing locus of the $ 3 \times 3 $ minors of the Jacobian matrix
	\[   
	\Jac(E_8) := \Jac(h_1, h_2, h_3; x_1, x_6, x_8, y_1, \ldots, y_8) .
	\] 
	First, observe that $ h_3 = P_3 (x_8,y_8,y_7) - P_2(x_6,y_6) 
	= x_8 y_8 y_7 - x_8 - y_7  - x_6 y_6 + 1 $. 
	%\out{implies}
	\change{We get}
	\begin{equation}
	\label{eq:der_p_3}
	\dfrac{\partial h_3}{\partial x_8} = y_7 y_8 - 1 
	\neq  0, 
	\hspace{10pt}
	\mbox{ or } 
	\hspace{10pt}
	\dfrac{\partial h_3}{\partial y_7} = x_8 y_8 - 1
	\neq  0, 
	\hspace{10pt}
	\mbox{ or } 
	\hspace{10pt}
	\dfrac{\partial h_3}{\partial y_8} = x_8 y_7 
	\neq  0,
	\end{equation}
	\change{where the non-vanishing of at least one derivative can be seen by setting two derivatives equal to zero which leads to the third derivative being non-zero.}
	We fix 	$ z \in \{ x_8, y_7, y_8 \} $ such that $ \frac{\partial h_3}{\partial z} \neq 0 $.
	The minor determined by the derivatives with respect to 
	$ ( y_5, y_6, z ) $ provides
	that we must have $ x_6 (x_6 y_6 - 1 ) =  0 $ at a singular point of $ \Spec(\cA(E_8)) $.
	If
	$ x_6 y_6 - 1 = 0 $,
	then vanishing of $ h_3 $ and of the minor of $ \Jac(E_8) $ determined by the derivatives with respect $ ( x_6, y_6, z) $ 
	lead to a contradiction.
	Thus, we get $ x_6 = 0 $.
	The columns of $ \Jac(E_8) $ corresponding to $ z $ and $ y_5 $ become the transpose of the vectors 
	$ (0, 0,  \frac{\partial h_3}{\partial z} ) $
	and $ (0,1, \ast) $, for some entry $ \ast $.
	Since $ \frac{\partial h_3}{\partial z} $ is non-zero,
	we can only have a singular point
	if $ \Jac(h_1; x_1, x_6, y_1, \ldots, y_4, y_6 ) $ is the zero vector.
	Note that 
	$ \frac{\partial h_1}{\partial x_6} = - y_6 $
	and $ \frac{\partial h_1}{\partial y_6} = - x_6 = 0 $.
	Using $ x_6 = 0 $, the vanishing of $ h_1 $ can be reformulated as 
	\[
	Q_5 :=  P_5 (x_1, y_1, y_2, y_3, y_4) + 1 = 0. 
	\]
	Since 
	$ 
	\frac{\partial h_1}{\partial x_1}
	= \frac{\partial Q_5 }{\partial x_1}
	$
	and
	$
	\frac{\partial h_1}{\partial y_i}
	= \frac{\partial Q_5 }{\partial y_i},
	$ 
	for 
	$
	i \in \{ 1, \ldots, 4 \},
	$ 
	we get the inclusion 
	\[ 
	\Sing(\Spec ( \cA(E_8))) \subseteq \Sing (V(Q_5)) 
	=
	\varnothing 
	\ ,
	\]
	where the last equality holds by Lemma~\ref{L:Sing(Pn)}.	
	This concludes the proof for $ E_8 $.

	The statement for $ E_6 $ follows by applying the analogous arguments for the 
 quiver:
	\begin{center} 
		\begin{tikzpicture}[->,>=stealth',shorten >=1pt,auto,node distance=2cm, thick,main node/.style={circle,draw}]

		\node[main node] (3) {$ 1 $};
		\node[main node] (4) [left of=3] {$ 2 $};
		\node[main node] (5) [left of=4] {$ 3 $};
		\node[main node] (6) [below of=5] {$ 4 $};
		\node[main node] (7) [left of=5] {$ 5 $}; 
		\node[main node] (8) [left of=7] {$ 6 $};

		\path
		(3) edge (4)
		(4) edge (5)
		(5) edge (7)
		(5) edge (6)
		(7) edge (8);
		\end{tikzpicture}
	\end{center}
	We leave the details as an easy exercise for the reader.   
\end{proof}

\begin{proposition}
	Consider the variety $ \Spec ( \cA(E_7)) $ over any field $ K $
	corresponding to the cluster algebra of type $ E_7 $. 
	\begin{enumerate}[leftmargin=*,label=(\arabic*)]
		\item 
		If $ \car(K) \neq 2 $, then $ \Spec ( \cA(E_7)) $ is {regular}.
		
		\item 
		If $ \car(K) = 2 $, then $ \Spec ( \cA(E_7)) $ is isomorphic to $ 7 $-dimensional subvariety of $ \A^{10}_K $, whose singular locus is a {regular} surface. 
		Locally at the singular locus, $ \Spec ( \cA(E_7)) $ is isomorphic to 
		a 
		cylinder over an isolated hypersurface singularity of type $ A_1 $ in $ \A_K^6 $ intersected with a {regular} hypersurface, which is transversal to the cylinder.
		In particular, the resolution of singularities of $ \Spec ( \cA(E_7)) $ is given by the blowup of the singular locus.	
	\end{enumerate}
\end{proposition}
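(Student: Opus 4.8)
The plan is to follow the strategy already used in the $D_n$-case and in the proof of Proposition~\ref{Prop:E6_8} for $E_6,E_8$. First I would fix a convenient orientation of the $E_7$ Dynkin diagram; by Lemma~\ref{Lem:tree_equiv} any orientation is mutation-equivalent, so I take the trivalent vertex as the centre, with its three arms of lengths $1,2,3$ oriented towards it. Applying Theorem~\ref{Thm:clus=low} and Lemma~\ref{L:BFZ_Cor1.17} gives $\cA(E_7)\cong K[x_1,\ldots,x_7,y_1,\ldots,y_7]/I$ from the exchange relations, and then, exactly as in Lemma~\ref{Lem:pres_Dn} and the $E_8$-computation, I eliminate the four interior cluster variables along the arms using the continuant recursion (Lemma~\ref{L:pnrecur}) and symmetry (Lemma~\ref{L:pnsym}). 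This yields a codimension-three presentation
\[
	\cA(E_7)\cong K[x_1,x_6,x_7,y_1,\ldots,y_7]/\langle h_1,h_2,h_3\rangle,
\]
realising $\Spec(\cA(E_7))$ as a $7$-dimensional subvariety of $\A^{10}_K$, where (up to relabelling) $h_1$ pairs the long-arm continuant $P_4(x_1,y_1,y_2,y_3)$ with the short-arm continuant $P_2(x_7,y_7)$, the relation $h_3$ pairs the middle-arm continuant $P_3$ with $P_2(x_7,y_7)$, and $h_2$ is the centre relation. The decisive structural feature, in contrast to $E_8$ (where the relevant arm produced the odd continuant $P_5$), is that the long arm of $E_7$ produces the \emph{even} continuant $P_4$.

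Next I would apply Zariski's Jacobian criterion, so that $\Sing(\Spec(\cA(E_7)))$ is the vanishing locus of the $3\times3$ minors of $\Jac(E_7):=\Jac(h_1,h_2,h_3;x_1,x_6,x_7,y_1,\ldots,y_7)$. Mirroring the $E_8$-argument, the relation $h_3$ has a nonvanishing partial derivative in one of the middle-arm variables at any candidate singular point; combining this with the minors in the short-arm and centre directions forces $x_7=0$ (the alternative $x_7y_7=1$ being excluded by $h_3$ together with a further minor). Once $x_7=0$, the vanishing of the remaining minors forces the row of $\Jac(E_7)$ coming from $h_1$ to vanish. Since $\partial h_1/\partial x_7=-y_7$, this yields $y_7=0$, while $P_2(x_7,y_7)=-1$ turns $h_1=0$ into $Q_4:=P_4(x_1,y_1,y_2,y_3)+1=0$ and the long-arm partials into those of $Q_4$; hence a singular point must lie in $\Sing(V(Q_4))$. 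By Proposition~\ref{Prop:SingPn}, with $2m=4$ and $\lambda=1$, we have $\Sing(V(Q_4))\neq\varnothing$ precisely when $1=(-1)^{m+1}=-1$, i.e.\ exactly when $\car(K)=2$. In the opposite case $\Sing(\Spec(\cA(E_7)))=\varnothing$, which proves~(1).

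For $\car(K)=2$ I would then read off the singular locus exactly. The conditions found above are $x_7=y_7=0$ and $x_1=y_1=y_2=y_3=0$ (from $\Sing(V(Q_4))$ via Proposition~\ref{Prop:SingPn}), together with $y_4=0$ from $h_2$ and $P_3(x_6,y_6,y_5)=1$ from $h_3$. Hence the singular locus is the surface $V(P_3(x_6,y_6,y_5)-1)$ in the middle-arm variables, which is \emph{regular} by Lemma~\ref{L:Sing(Pn)}(2) because $P_3$ is an odd continuant and $-1\neq0$. For the local description I use the coordinate change of Proposition~\ref{Prop:SingPn} to write $P_4+1=t_1t_2+t_3t_4$ locally; since $h_1=P_4+1-x_7y_7$, this puts $h_1$ in the form $t_1t_2+t_3t_4-x_7y_7$, an $A_1$-hypersurface in the six variables $(t_1,t_2,t_3,t_4,x_7,y_7)$, so $V(h_1)$ is a cylinder over an isolated $A_1$-singularity in $\A^6_K$ with cylinder directions $x_6,y_5,y_6$. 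As $P_2(x_7,y_7)=-1$ is a unit locally, $h_2$ is linear with unit coefficient in $y_4$ and merely eliminates $y_4$, while $h_3=P_3(x_6,y_6,y_5)+1-x_7y_7$ defines a regular hypersurface whose gradient is nonzero in the cylinder directions and is therefore transversal to the cylinder. This gives the stated normal form, and blowing up the regular singular locus resolves the singularity.

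The step I expect to require the most care is the $3\times3$-minor bookkeeping in characteristic $2$: one must confirm that the vanishing of \emph{all} minors of $\Jac(E_7)$ forces exactly $x_7=y_7=y_4=0$, the long-arm conditions of $\Sing(V(Q_4))$, and $P_3=1$, with no further components hidden among the minors involving the middle-arm derivatives, so that the singular locus is exactly the regular surface $V(P_3-1)$ (and in particular is cut down from the three-dimensional singular locus of the cylinder $V(h_1)$ by the transversal hypersurface $h_3$). Once this is established, the local normal form and the resolution follow from Proposition~\ref{Prop:SingPn} and the transversality of $h_3$ by routine verification, exactly as in the $D_n$-case.
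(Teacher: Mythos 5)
Your proposal is correct and takes essentially the same approach as the paper's proof: the same codimension-three presentation of $\cA(E_7)$ in $\A^{10}_K$ obtained by eliminating the interior arm variables via continuants, the same $E_8$-style analysis of the $3\times 3$ minors (a non-vanishing middle-arm derivative of $h_3$, forcing the short-arm variable to vanish, then reducing to $\Sing(V(P_4+1))$ and invoking Proposition~\ref{Prop:SingPn} and Lemma~\ref{L:Sing(Pn)}), and the same local $A_1$ normal form plus transversality of the middle-arm relation, with the blowup of the regular singular surface as resolution. The only (immaterial) slip: with your stated all-arms-inward orientation the centre exchange relation reads $x_4 y_4 = x_3 x_5 x_7 + 1$, so on the singular locus $h_2 = 0$ gives $y_4 = 1$ in characteristic $2$ rather than $y_4 = 0$; the value $y_4 = 0$ corresponds to the paper's orientation (long arm in, the other two arms out), and either way the locus is the same regular surface and the rest of your argument is unaffected.
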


\begin{proof}
	Analogous to the proof of Proposition~\ref{Prop:E6_8},
	we choose the quiver
	\begin{center} 
		\begin{tikzpicture}[->,>=stealth',shorten >=1pt,auto,node distance=2cm, thick,main node/.style={circle,draw}]

		\node[main node] (2) {$ 1 $};
		\node[main node] (3) [left of=2] {$ 2 $};
		\node[main node] (4) [left of=3] {$ 3 $};
		\node[main node] (5) [left of=4] {$ 4 $};
		\node[main node] (6) [below of=5] {$ 5 $};
		\node[main node] (7) [left of=5] {$ 6 $}; 
		\node[main node] (8) [left of=7] {$ 7 $};

		\path
		(2) edge (3)
		(3) edge (4)
		(4) edge (5)
		(5) edge (7)
		(5) edge (6)
		(7) edge (8);
		\end{tikzpicture}
	\end{center} 
	and we obtain
	$ 
	\cA(E_7) \cong K[x_1, x_5, x_7, y_1, \ldots, y_7]
	/
	\langle h_1, h_2 , h_3 \rangle ,
	$ 
	where we define
	\[
	\begin{array}{ll}
	h_1 := 
	P_4(x_1, y_1, y_2, y_3) -  P_2 (x_5, y_5),
	\\[5pt] 
	h_2 := 
	P_2 (x_5, y_5)  y_4 - x_5 P_2 (x_7, y_7) - P_3(x_1, y_1, y_2), 
	\\[5pt]
	h_3 := 
	P_3 (x_7,y_7,y_6) - P_2(x_5, y_5) .
	\end{array} 
	\]
	The same arguments as in the $ E_8 $ case provide that
	$	\Sing (\Spec ( \cA(E_7))) $ is isomorphic to a subvariety of $ V (x_5) \cap \Sing (V(Q_4)) $, 
	where 
	$  
	Q_4 := Q_4 (z_1,z_2,z_3,z_4):= P_4(z_1,z_2,z_3,z_4)  + 1 
	$
	and 
	$ (z_1, z_2, z_3, z_4) := (x_1, y_1, y_2, y_3 ) $.
	Since $ \car(K) \neq 2 $, we have $ \Sing (V(Q_4)) = \varnothing $,
	by Proposition~\ref{Prop:SingPn},
	which concludes the the proof of (1).

	(2) 
	Suppose $ \car(K) = 2 $. 
	We choose  $ z \in \{ x_7, y_6, y_7 \} $ such that $ \frac{\partial h_3}{\partial z} \neq 0 $.
	Then the minor of $ \Jac(E_7)  := \Jac(h_1, h_2, h_3; x_1, x_5, x_7, y_1, \ldots, y_7) $ corresponding to the derivatives with respect to 
	$ (y_4, y_4,z) $ and $ (x_5, y_5, z) $ provide that at a singular point, we have $ x_5 = 0 $. 
	We get that $ P_2 (x_5, y_5) = 1 $
	and hence 
	\[  
	h_1 = 
	P_4(x_1, y_1, y_2, y_3) -  P_2 (x_5, y_5)
	=
	f_3 (x_1, y_1, y_2, y_3) .
	\] 
	The minors corresponding to $ (*, y_4, z) $, where $ * \in \{ x_1, x_5, y_1, y_2, y_3 \} $ lead to the equality
	\[
	\Sing (\Spec ( \cA(E_7))) 
	= \Sing (V(f_3)) \cap V(x_5, h_2, h_3)
	= V (x_1, x_5, y_1, \ldots, y_5, P_3 (x_7, y_7,y_6)+1) 
	\ , 
	\] 
	which is {regular} 
	by 
	Lemma~\ref{L:Sing(Pn)}.
	\\
	Let us consider the situation locally at the singular locus. 
	Then, $ P_2 (x_5,y_5) $ and $ P_2 (y_2,y_3) = 1 + y_2 y_3 $ are units. 
	Thus, we may introduce the local coordinates
	$ z_1 := y_1 (1 + y_2 y_3) + y_3 $
	and 
	$ z_4 := h_2   = 
	P_2 (x_5, y_5)  y_4 - x_5 P_2 (x_7, y_7) - P_3(x_1, y_1, y_2) $.
	As in \eqref{eq:der_p_3}, the derivatives of 
	$ h_3 = 
	P_3 (x_7,y_7,y_6) - P_2(x_5, y_5) $ 
	with respect to $ x_7$, $ y_7 $, and $ x_6 $ cannot vanish at the same time, 
	i.e., $ V (h_3 ) $ is a {regular} hypersurface, which is transversal to $ V( z_4 ) $.
	Observe that (using $ \cchar(K)=2 $)
	\[ 
	\begin{array}{l} 
	h_1 = 
	x_1 y_1 y_2 y_3 + x_1 y_1 + x_1 y_3 + y_2 y_3 + x_5 y_5  
	= 
	x_1 z_1 + x_5 y_5 + y_2 y_3 \ .
	\end{array} 
	\]
	This implies the remaining parts of the proposition. 
\end{proof}

\bigskip

\section{Singularities of finite type cluster algebras not coming from quivers}

\label{sec:Matrix}

{Next, let us discuss the singularities of cluster algebras of finite type for which it is necessary to work with skew-symmetrizable matrices.
Recall the exchange relations~\eqref{exchangeskewsymmetrizable} in the matrix setting (Remark~\ref{Bem:Mat}) 
as well as the definitions of subsection~\ref{subsec:fin}.}

\medskip

\subsection{$\boldsymbol{B_n}$ cluster algebras}

A possible exchange matrix $ B $ for type $B_n$, {$ n \geq 2 $}, is given as
\[ 
	B=\begin{pmatrix} 0 & 1 & \cdots & 0 & 0 & 0 & 0 \\\
	-1 & 0 & \ddots & 0 & 0 & 0 & 0 \\
	\vdots & \ddots & \ddots & \ddots & \vdots & \vdots & \vdots  \\
	0 & 0 & \ddots & 0 &1 & 0 & 0 \\
	0 & 0  & \cdots & -1 & 0 &1 & 0 \\
	0 & 0 & \cdots & 0 & -1 & 0 & 1 \\
	0 & 0 & \cdots & 0   &0 &-2  & 0 \\
\end{pmatrix}  
\]
(cf.~\cite[\S 5.5, (5.31)]{FWZ2017})
and {the corresponding Dynkin diagram is of type $ B_n $}
(where $ n $ is the number of vertices); 
\[
\begin{tikzpicture}[>=stealth',shorten >=1pt,auto,node distance=2cm, thick,main node/.style={circle,draw}]

\node[main node] at (0,0) { \ \ \ };
\node[main node] at (2,0) { \ \ \ };
\node at (4,0) {$ \cdots $};
\node[main node] at (6,0) { \ \ \ };
\node[main node] at (8,0) { \ \ \ };

\draw (0.25,0) -- (1.75,0);
\draw (2.25,0) -- (3.5,0);
\draw (4.5,0) -- (5.75,0);
\draw[->] (6.25,0.1) -- (7.75,0.1);
\draw[->] (6.25,-0.1) -- (7.75,-0.1);

\end{tikzpicture}
\ .
\]

\begin{lemma}
	\label{L:Bn_Pres}
	Let $ K $ be any field and {$ n \geq 2 $}.
	The cluster algebra 
	$ \cA(B_n) $ is isomorphic to 
	\[ 
	{K[z_1, \ldots, z_{n-1},u_1,u_2, u_3]/\langle g_n, h_n \rangle}
	\ , 
	\]
	where 
	{$ g_n := (u_1 u_2 - 1 ) u_3 - u_1^2 - P_{n-2}(z_1,\ldots, z_{n-2})  $
	and 
	$ h_n := u_1 u_2 - 1 - P_{n-1}(z_1,\ldots, z_{n-1}) $. 
	In particular, the variety $ \Spec(\cA(B_n)) $ is isomorphic to a codimension two subvariety of $ \mathbb{A}_K^{n+2} $.}
\end{lemma}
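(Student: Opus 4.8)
The plan is to follow the same strategy as in the $A_n$- and $D_n$-cases (Lemmas~\ref{L:presentationAn} and~\ref{Lem:pres_Dn}), the only genuinely new feature being the double arrow at the end of the quiver, which is responsible for the square $u_1^2$ appearing in $g_n$. First I would observe that the quiver $\Gamma(B)$ attached to the displayed matrix is the acyclic path $1 \to 2 \to \cdots \to n-1 \to n$ (the final arrow carrying the weight recorded by $b_{n-1,n}=1$, $b_{n,n-1}=-2$). By Theorem~\ref{Thm:clus=low} together with the remark following Lemma~\ref{L:BFZ_Cor1.17} in the skew-symmetrizable setting, $\cA(B_n)$ coincides with its lower bound, so it is presented as $K[x_1,\ldots,x_n,y_1,\ldots,y_n]/I$ with $I$ generated by the exchange relations~\eqref{exchangeskewsymmetrizable}. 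Reading these off the columns of $B$ gives
\[
x_1 y_1 = 1 + x_2, \qquad x_k y_k = x_{k-1} + x_{k+1}\ \ (2 \le k \le n-2),
\]
\[
x_{n-1} y_{n-1} = x_{n-2} + x_n^2, \qquad x_n y_n = x_{n-1} + 1 .
\]

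Next I would eliminate $x_2,\ldots,x_{n-1}$. The first $n-2$ relations are identical to the $A_n$-string, so the recursion (Lemma~\ref{L:pnrecur}) and symmetry (Lemma~\ref{L:pnsym}) of continuants, exactly as in the proof of Lemma~\ref{L:presentationAn}, yield $x_k = P_k(x_1,y_1,\ldots,y_{k-1})$ for $2 \le k \le n-1$. Substituting into the last two relations and applying the recursion once more, the relation at vertex $n-1$ collapses to $P_n(x_1,y_1,\ldots,y_{n-1}) = x_n^2$, while the relation at vertex $n$ becomes $x_n y_n = P_{n-1}(x_1,y_1,\ldots,y_{n-2}) + 1$. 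At this point the surviving variables are precisely $x_1, x_n, y_1, \ldots, y_n$, that is $n+2$ of them, matching the ambient dimension in the statement.

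I would then perform the relabelling $(z_1,\ldots,z_{n-1}) := (x_1,y_1,\ldots,y_{n-2})$ and $(u_1,u_2,u_3) := (x_n,y_n,y_{n-1})$, which is a bijection on the algebraically independent variables and hence an isomorphism of the two polynomial rings. It turns the relation at vertex $n$ into $h_n$ verbatim. It remains to check that the relation at vertex $n-1$, say $A$, generates the same ideal together with $h_n$ as $g_n$ does. Writing $A = P_n(z_1,\ldots,z_{n-1},u_3) - u_1^2$ and expanding the continuant by the recursion as $u_3 P_{n-1}(z_1,\ldots,z_{n-1}) - P_{n-2}(z_1,\ldots,z_{n-2})$, a direct subtraction gives
\[
A - g_n = u_3\bigl(P_{n-1}(z_1,\ldots,z_{n-1}) - (u_1 u_2 - 1)\bigr) = -u_3\, h_n ,
\]
so $\langle A, h_n\rangle = \langle g_n, h_n\rangle$ and the claimed presentation follows. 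The codimension-two statement is then immediate: $\cA(B_n)$ is a subalgebra of $\cF\cong K(x_1,\ldots,x_n)$ containing the algebraically independent cluster, hence has Krull dimension $n$, which inside $\A_K^{n+2}$ is codimension two.

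The computation is entirely mechanical once set up; the one point requiring care — and the step I would treat as the crux — is the correct folding of the two end relations through the continuant recursion. Concretely, one must recognise that the exchange term $x_n^2$ combines with $P_{n-1}y_{n-1} - P_{n-2}$ so that, after choosing $u_3 = y_{n-1}$, the degree-$n$ continuant $P_n$ emerges and the two generators differ only by the multiple $-u_3 h_n$ of the second relation. Keeping the continuant orders and their argument strings aligned (in particular distinguishing the $n-1$ and $n-2$ argument lengths, and tracking where the square $u_1^2$ enters) is where an indexing slip is most likely, so I would verify the base cases $n=2,3$ explicitly against the general formulas.
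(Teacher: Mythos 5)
Your proposal is correct and follows essentially the same route as the paper: reduce to the lower bound presentation via acyclicity, eliminate $x_2,\dots,x_{n-1}$ through the continuant recursion exactly as in the $A_n$-case, and then replace the relation at vertex $n-1$ by adding the multiple $u_3 h_n$ (your identity $A - g_n = -u_3 h_n$ is precisely the paper's substitution $g_n := \widetilde g_n + y_{n-1}h_n$). The only additions are cosmetic — writing the vertex-$(n-1)$ relation as $P_n = x_n^2$ before re-expanding it, and spelling out the transcendence-degree argument for the codimension claim, which the paper leaves to its earlier remark on lower bound cluster algebras of acyclic quivers.
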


\begin{proof}
Since the underlying diagram $\Gamma(B) $ is acyclic of type $B_n$, we get the presentation
\begin{align*}
\mathcal{A}(B_n) \cong K[x_1, \ldots, x_n, y_1, \ldots, y_n]/ & \langle x_1y_1-x_2-1, x_2y_2 - x_1 - x_3, \ldots, x_{n-2}y_{n-2}-x_{n-3}-x_{n-1}, \\ & 
{x_{n-1}y_{n-1} - x_n^2 - x_{n-2}}, x_n y_n -x_{n-1}-1 \rangle \ . 
\end{align*}
As in {the proof of Lemma~\ref{L:presentationAn} (type $ A_n $)}, 
one can express $x_k$ in terms of $x_1, y_1 ,\ldots, y_{k-1}$:
\[ 
	x_k=P_{k}(x_1, \ldots, y_{k-1}),  
	\ \ \mbox{ for } 2 \leq k \leq n-1 \ .
\]

{If we plug this into the remaining generators $x_{n-1}y_{n-1} - x_n^2 - x_{n-2} $ and $  x_n y_n -x_{n-1}-1 $, 
we obtain
\[	 
	\begin{array}{lll} 
	\widetilde g_n := P_{n-1}(x_1,y_1,\ldots, y_{n-2}) y_{n-1} - x_n^2 - P_{n-2}(x_1, y_1, \ldots, y_{n-3}) = 0 \ , 
	\\[5pt]
	h_n := x_n y_n - P_{n-1}(x_1,y_1,\ldots, y_{n-2}) - 1 = 0  \ .
	\end{array} 
\] 
We replace $ \widetilde g_n $ by $ g_n := \widetilde g_n + y_{n-1} h_n = 
(x_n y_n - 1 ) y_{n-1} - x_n^2 - P_{n-2}(x_1,y_1,\ldots, y_{n-3}) $. 
Therefore, 
$\mathcal{A}(B_n)$ is isomorphic to $K[x_1, x_n, y_1, \ldots, y_n]/\langle g_n , h_n\rangle$,
which yields the assertion after renaming the variables.
}
\end{proof}

\begin{prop}
	Let $ K $ be any field and $ n \geq 2 $. 
	For $\cchar(K)\neq 2$ the following holds:
	\begin{enumerate}[leftmargin=*,label=(\arabic*)]
		\item $ \Spec(\cA(B_n)) $ is singular if and only if $n \equiv 3 \mod 4$. 
		
		\item If $n = 4m - 1 $, for some $ m \in \ZZ_{> 0 } $, then $ \Spec(\cA(B_{4m-1})) $ has an isolated singularity 
		{at the origin
		and locally at the singular point, the variety is isomorphic to an $ A_1 $-hypersurface singularity.}
		In particular, its resolution of singularities is given by the blowup of the singular point.
	\end{enumerate}
	
	On the other hand, if $\cchar(K)=2$, then we have:  
	\begin{enumerate}[leftmargin=*,label=(\arabic*')]
		\item 
		If $n = 2m - 1 $, for some $ m \in \ZZ_{> 0 } $, then $ \Spec(\cA(B_{2m-1})) $ has an isolated singularity of type $A_1$ at the origin.  
		
		\item 
		If $ n = 2m $, for some $ m \in \ZZ_{> 0 } $, then $ \Spec(\cA(B_{2m})) $ has an isolated singularity of type $A_1$ at the closed point $ V (u_1 - 1, u_2 -1, u_3, z_1, \ldots, z_{n-1} ) $.

	\end{enumerate}
	In particular, the resolution of singularities is given by the blowup of the singular point in both cases (1') and (2')
	
\end{prop}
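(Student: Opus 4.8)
The plan is to work from the presentation $\cA(B_n) \cong K[z_1, \ldots, z_{n-1}, u_1, u_2, u_3]/\langle g_n, h_n\rangle$ of Lemma~\ref{L:Bn_Pres}, so that $\Spec(\cA(B_n))$ is the codimension-two subvariety $V(g_n, h_n) \subset \A^{n+2}_K$ of dimension $n$, and to locate its singular locus with the Jacobian criterion: the simultaneous vanishing of $g_n$, $h_n$ and of all $2\times 2$ minors of $\Jac(g_n, h_n; u_1, u_2, u_3, z_1, \ldots, z_{n-1})$. First I would record the Jacobian explicitly; its $z$-entries are continuants by Lemma~\ref{L:pnder}, and the columns $\partial/\partial u_1$, $\partial/\partial u_2$ paired with $\partial/\partial u_3$ give the two minors $-u_2(u_1 u_2 - 1)$ and $-u_1(u_1 u_2 - 1)$. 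Hence at a singular point either (A) $u_1 u_2 - 1 \neq 0$, forcing $u_1 = u_2 = 0$, or (B) $u_1 u_2 = 1$. This bifurcation drives the whole argument.

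In Case (A) the relations $g_n = h_n = 0$ give $u_3 = -P_{n-2}(z)$ and $P_{n-1}(z) = -1$, and the minors pairing $u_3$ with each $z_j$ reduce (via Lemma~\ref{L:pnder}, noting $\partial P_{n-1}/\partial z_{n-1} = P_{n-2}$) exactly to $\nabla P_{n-1}(z) = 0$; thus Case (A) contributes the locus $u_1 = u_2 = u_3 = 0$ with $z \in \Sing(V(P_{n-1}+1))$. Feeding in Lemma~\ref{L:Sing(Pn)} and Proposition~\ref{Prop:SingPn} for $P_{n-1}$ (whose index is even precisely when $n$ is odd) then yields: for $\car(K)\neq 2$ this is the origin exactly when $n \equiv 3 \bmod 4$ and empty otherwise, and for $\car(K)=2$ it is the origin exactly when $n$ is odd. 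In Case (B) the minor from $\partial/\partial u_1, \partial/\partial u_2$ equals $-2u_1^2$. When $\car(K)\neq 2$ this is a unit (as $u_1 u_2 = 1$), so Case (B) is empty and the singular locus is completely described for odd characteristic $\neq 2$, proving (1). When $\car(K)=2$ this minor vanishes identically, and I would instead use the minors pairing $u_1$ with the $z_j$: the one with $z_{n-1}$ forces $u_3 = 0$ (since $P_{n-2}(z) = u_1^2 \neq 0$ by $g_n = 0$), and the remaining ones force $\nabla P_{n-2}(z) = 0$. The clean input here is that an \emph{odd} continuant has no critical points at all, an immediate consequence of Lemma~\ref{L:Sing(Pn)}(2); so Case (B) is empty for $n$ odd, while for $n$ even Proposition~\ref{Prop:SingPn} shows the only critical point of $P_{n-2}$ is the origin, producing the single point $u_1 = u_2 = 1$, $u_3 = 0$, $z = 0$, i.e.\ $V(u_1 - 1, u_2 - 1, u_3, z_1, \ldots, z_{n-1})$.

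It remains to classify each singular point as $A_1$ and to resolve it. At the origin (the $\car \neq 2$, $n \equiv 3 \bmod 4$ case and the $\car = 2$, $n$ odd case) the element $u_1 u_2 - 1$ is a unit, so $g_n = 0$ eliminates $u_3$ and $\Spec(\cA(B_n))$ is locally the hypersurface $h_n = u_1 u_2 - (P_{n-1}(z) + 1) = 0$. Since $n - 1$ is even, Proposition~\ref{Prop:SingPn} provides local coordinates with $P_{n-1}(z) + 1 = \sum_{i} t_{2i-1}t_{2i}$, whence $h_n = u_1 u_2 - \sum_i t_{2i-1}t_{2i}$ is the $A_1$ normal form in the $n$-odd shape $z x_1 + x_2 x_3 + \cdots$, and the blowup of the point resolves it.

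The main obstacle is the remaining $\car(K)=2$, $n$ even point $(1,1,0,0)$, where $u_1 u_2 - 1$ vanishes so $u_3$ cannot be eliminated through $g_n$ and the characteristic obstructs completing the square in $u_1^2$. The plan there is to pass to the coordinates $(u_1, w := u_1 u_2 - 1, u_3, z)$ (legitimate since $u_1$ is a unit), eliminate $w = P_{n-1}(z)$ using $h_n$, and reduce to the hypersurface $F := P_{n-1}(z) u_3 - u_1^2 - P_{n-2}(z) = 0$. Because $\partial P_{n-1}/\partial z_{n-1} = P_{n-2}$ is a unit near the point, I can then take $s := P_{n-1}(z)$ as a coordinate in place of $z_{n-1}$, turning $F$ into $s\,u_3 - u_1^2 - P_{n-2}(z_1, \ldots, z_{n-2})$ with the $u_3$-term now a clean product. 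Finally, applying Proposition~\ref{Prop:SingPn} to $P_{n-2}$ (index even) together with the substitution $u_1 = 1 + a_1$, so that $u_1^2 + P_{n-2}(0) = a_1^2$ in characteristic $2$, brings $F$ to $a_1^2 + s\,u_3 + \sum_i t_{2i-1}t_{2i}$, which is exactly the $A_1$ normal form in the $n$-even shape $z^2 + x_1 x_2 + \cdots$; the blowup of the point then resolves the singularity. The delicate points to verify are that each coordinate change is a genuine local isomorphism at the relevant point and that the sign bookkeeping of Proposition~\ref{Prop:SingPn} collapses correctly in characteristic $2$, so that the constant terms cancel and the square $a_1^2$ survives with the right parity.
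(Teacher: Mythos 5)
Your proposal is correct and follows essentially the same route as the paper: the presentation of Lemma~\ref{L:Bn_Pres}, the same $2\times 2$ Jacobian minors $-u_2(u_1u_2-1)$, $-u_1(u_1u_2-1)$, $-2u_1^2$ driving the case split $u_1=u_2=0$ versus $u_1u_2=1$, and the same inputs Lemma~\ref{L:Sing(Pn)} and Proposition~\ref{Prop:SingPn} to settle both (non)emptiness of the singular locus and the $A_1$ classification. The only notable difference is that your explicit normal-form computation at $V(u_1-1,u_2-1,u_3,z_1,\ldots,z_{n-1})$ in characteristic $2$ (eliminating $u_2$ via $h_n$, taking $s=P_{n-1}$ as a local coordinate, and substituting $u_1=1+a_1$) spells out a step the paper compresses into ``From this, we obtain assertion (2')''.
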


\begin{proof} 
	{We use the presentation $\cA(B_n) \cong K[z_1, \ldots, z_{n-1},u_1,u_2,u_3]/\langle g_n, h_n \rangle $ of Lemma~\ref{L:Bn_Pres},
		where 
		$ g_n = (u_1 u_2 - 1 ) u_3 - u_1^2 - P_{n-2}(z_1,\ldots, z_{n-2})  $
		and 
		$ h_n := u_1 u_2 - 1 - P_{n-1}(z_1,\ldots, z_{n-1}) $.
		We consider the $ 2 \times 2 $ minors of the Jacobian matrix.
		The columns corresponding to $ u_1, u_2 , u_3 $ yield that
		the following equations hold for the singular locus
		\[
			u_1 (u_1 u_2 - 1) = u_2 (u_1 u_2 - 1 ) = 2 u_1^2 = 0 
			\ . 
		\] 
		First, assume $ \car(K) \neq 2 $.
		Then, we have $ u_1 = 0 $, which implies $ u_2 = 0 $
		and the $ u_3 $ column of the Jacobian matrix is $ ( -1 , 0 )^T $.
		This implies that the singular locus of $ \Spec(\cA(B_n)) $ is contained in the singular locus of $ P_{n-1}(z_1,\ldots, z_{n-1})  + 1  = 0 $.
		The latter is empty if $ n - 1 \equiv 1 \mod 2 $ 
		(Lemma~\ref{L:Sing(Pn)}(2))
		or if $ n -1 = 2m $ for some $ m \in \ZZ_+ $ and 
		$ m  \equiv 0 \mod 2 $  (Proposition~\ref{Prop:SingPn}).
		Therefore, $ \Spec(\cA(B_n)) $ is {regular} if $ n \not\equiv 3 \mod 4 $.
		\\
		Consider the case $ n \equiv 3 \mod 4 $.  
		Then the singular locus of  $ P_{n-1}(z_1,\ldots, z_{n-1})  + 1   $ is $ V ( z_1, \ldots, z_{n-1}) $ (Proposition~\ref{Prop:SingPn}). 
		In particular, $ P_{n-2}(z_1, \ldots, z_{n-2})  = 0 $, by Lemma~\ref{pnquad}.
		Moreover, $ u_1 = 0 $ implies that $ g_n = 0 $ is equivalent to 
		$ u_3 =  0 $. 
		Hence, $ \Spec(\cA(B_n)) $ has an isolated singularity 
		{at the origin.}
		{Locally at the origin,
		$ u_1 u_2 - 1 $ is invertible and 
		thus the generator	
		$ g_n = (u_1 u_2 - 1 ) u_3 - u_1^2 - P_{n-2}(z_1,\ldots, z_{n-2})  $ can be eliminated.
		This has no effect on $ h_n $ since $ u_3 $ does not appear in it.
		We obtain that, locally at the singular point, $ \Spec(\cA(B_n)) $ is isomorphic to an $ A_1 $-hypersurface singularity},
		where the type of the singularity can be seen by 
		applying the same coordinate transformation as in the proof of Proposition~\ref{Prop:SingPn}. 
	
		Suppose $ \car(K) = 2 $. 
		We make a case distinction for $ u_1 (u_1 u_2 - 1) = 0 $.
		If $ u_1 = 0 $, then the same arguments as for $ \car(K) \neq 2 $ apply,
		which provides an isolated $ A_1 $-singularity at the origin if and only if $ n \equiv 1 \mod 2 $.
		\\
		Thus, let $ u_1 u_2 - 1 = 0 $. 
		Then $ g_n = h_n = 0 $ is equivalent to 
		$ u_1^2 + P_{n-2} (z_1, \ldots, z_{n-2}) = P_{n-1} (z_1, \ldots, z_{n-1}) = 0 $. 
		{The minor of the Jacobian matrix of $ g_n $ and $ h_n $ corresponding to $ ( u_2, z_{n-1} ) $ provides that we have to have $ u_1 u_3 P_{n-2} (z_1, \ldots, z_{n-2} ) = 0 $.
		Since $ u_1 u_2 - 1 = 0 $ and $ P_{n-2} (z_1, \ldots, z_{n-2})  = - u_1^2 $, 
		we obtain $ u_3 = 0 $.} 
		The column of the Jacobian matrix with respect to $ u_2 $ is $ (0,u_1)^T $ 
		and $ u_1 \neq 0 $ provides that all derivatives of $ P_{n-2} (z_1, \ldots, z_{n-2} ) $ have to vanish.
		If $ n \equiv 1 \mod 2 $, then $ V ( P_{n-2} + u_1^2 ) $ is {regular} by Lemma~\ref{L:Sing(Pn)}(2) (where $ u_1^2 \neq 0 $ takes the role of $ \lambda $).
		On the other hand, if $ n \equiv 0 \mod 2 $, then Proposition~\ref{Prop:SingPn} implies that $ V( P_{n-2} + u_1^2 ) $ has a singularity of type $ A_1 $ at $ V (z_1, \ldots, z_{n-2}) $ if and only if $ u_1 = 1 $.
		From this, we obtain assertion (2').}
\end{proof}

\medskip

\subsection{$\boldsymbol{C_n}$ cluster algebras}

We choose the exchange matrix $B$ for type $C_n$, {$n \geq 3$}, as
\[
B=\begin{pmatrix} 0 & 1 & \cdots & 0 & 0 & 0 & 0 \\\
-1 & 0 & \ddots & 0 & 0 & 0 & 0 \\
\vdots & \ddots & \ddots & \ddots & \vdots & \vdots & \vdots  \\
0 & 0 & \ddots & 0 &1 & 0 & 0 \\
0 & 0  & \cdots & -1 & 0 &1 & 0 \\
0 & 0 & \cdots & 0 & -1 & 0 & 2 \\
0 & 0 & \cdots & 0   &0 &-1  & 0 \\

\end{pmatrix} 
\]  
(cf.~\cite[\S 5.5, (5.32)]{FWZ2017}).
The {corresponding Dynkin diagram is} of type $ C_n $
(where $ n $ is the number of vertices); 
\[
\begin{tikzpicture}[>=stealth',shorten >=1pt,auto,node distance=2cm, thick,main node/.style={circle,draw}]

\node[main node] at (0,0) { \ \ \ };
\node[main node] at (2,0) { \ \ \ };
\node at (4,0) {$ \cdots $};
\node[main node] at (6,0) { \ \ \ };
\node[main node] at (8,0) { \ \ \ };

\draw (0.25,0) -- (1.75,0);
\draw (2.25,0) -- (3.5,0);
\draw (4.5,0) -- (5.75,0);
\draw[->] (7.75,0.1) -- (6.25,0.1);
\draw[->] (7.75,-0.1) -- (6.25,-0.1);

\end{tikzpicture}
\ .
\]

For this cluster algebra, the characteristic of the field makes a significant difference. 
First, we provide a suitable presentation of $ \cA(C_n) $. 

\begin{lemma}
	\label{L:Cn_Pres}
	Let $ K $ be any field and {$ n \geq 3 $}.
	The cluster algebra 
	$ \cA(C_n) $ is isomorphic to 
	\[ 
	K[z_1, \ldots, z_{n+1}]/\langle P_n(z_1, \ldots, z_n)z_{n+1} - P_{n-1}(z_1, \ldots, z_{n-1})^2 -1 \rangle \ ,
	\] 
	where $ P_{n} $ is the continuant polynomial defined in Section~\ref{S:continuants}. 
	In particular, the variety $ \Spec(\cA(C_n)) $ is isomorphic to a hypersurface in $ \mathbb{A}_K^{n+1} $.
\end{lemma}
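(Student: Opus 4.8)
The plan is to follow the elimination strategy already used for type $A_n$ in Lemma~\ref{L:presentationAn}, the only genuinely new feature being the weight-$2$ entry of the exchange matrix $B$. First I would observe that the Dynkin diagram $\Gamma(B)$ of type $C_n$ is a tree, hence acyclic, so Theorem~\ref{Thm:clus=low} together with the skew-symmetrizable version of Lemma~\ref{L:BFZ_Cor1.17} (the Remark following it) gives
\[
\cA(C_n) \cong K[x_1, \ldots, x_n, y_1, \ldots, y_n]/I ,
\]
where $I$ is generated by the $n$ exchange relations. Reading these off from $B$ via \eqref{exchangeskewsymmetrizable}, the first $n-1$ form the familiar $A$-type chain
\[
x_1 y_1 = x_2 + 1, \qquad x_k y_k = x_{k-1} + x_{k+1} \ (2 \le k \le n-2), \qquad x_{n-1} y_{n-1} = x_{n-2} + x_n ,
\]
whereas the relation in direction $n$ registers the exponent $b_{n-1,n} = 2$ and becomes
\[
x_n y_n = x_{n-1}^2 + 1 .
\]

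Next I would eliminate $x_2, \ldots, x_n$ exactly as in the $A_n$ case. The chain relations, combined with the continuant recursion (Lemma~\ref{L:pnrecur}) and its symmetry (Lemma~\ref{L:pnsym}), yield successively
\[
x_k = P_k(x_1, y_1, \ldots, y_{k-1}), \qquad k = 2, \ldots, n ,
\]
so that in particular $x_{n-1} = P_{n-1}(x_1, y_1, \ldots, y_{n-2})$ and $x_n = P_n(x_1, y_1, \ldots, y_{n-1})$. Substituting these two expressions into the last remaining generator $x_n y_n - x_{n-1}^2 - 1$ leaves the single relation
\[
P_n(x_1, y_1, \ldots, y_{n-1}) \, y_n - P_{n-1}(x_1, y_1, \ldots, y_{n-2})^2 - 1 ,
\]
and renaming $(x_1, y_1, \ldots, y_{n-1}, y_n)$ as $(z_1, \ldots, z_{n+1})$ produces exactly the asserted hypersurface in $\A^{n+1}_K$.

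I expect the only point demanding real care to be the faithful transcription of the exchange relations in the skew-symmetrizable setting, in particular checking that it is $x_{n-1}$, and not $x_n$, that appears squared. This asymmetry is the algebraic trace of the folding of $A_{2n-1}$ onto $C_n$, and it is precisely what will later make $\cA(C_n)$ behave differently from $\cA(A_n)$ in characteristic $2$. Once the relations are correctly set up, the elimination is entirely parallel to the $A_n$ argument, the continuant identities do all the bookkeeping, and no further obstacle arises.
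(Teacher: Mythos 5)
Your proposal is correct and follows essentially the same route as the paper: acyclicity of $\Gamma(B)$ gives the presentation by the $n$ exchange relations (with the squared term $x_{n-1}^2$ correctly placed in the direction-$n$ relation), the chain relations are used to eliminate $x_2,\ldots,x_n$ via $x_k = P_k(x_1,y_1,\ldots,y_{k-1})$, and the last generator becomes $P_n\,y_n - P_{n-1}^2 - 1$ after substitution and renaming. The paper's proof is exactly this argument, so no comparison beyond this is needed.
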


\begin{proof}
	Since the underlying diagram $ \Gamma(B) $ is acyclic,
	the cluster algebra $ \cA(C_n)$ has a presentation as
	\[ 
	K[x_1, \ldots, x_n, y_1, \ldots, y_n]/\langle 
	h_1, \ldots, h_n \rangle 
	\]
	where we define
	$ h_1 := x_1 y_1-x_2-1  $,
	$ h_n := x_n y_n -x_{n-1}^2-1 $,
	and 
	$ h_k := x_k y_k - x_{k-1} - x_{k+1} $, 
	for $ k \in \{ 2, \ldots, n- 1 \} $.
	Similar as for type $A_n$ (see Lemma \ref{L:presentationAn}) one can for $2 \leq k \leq n$ stepwise express $x_k$ in terms of $x_1, y_1,\ldots, y_{k-1}$:
	$$x_k=P_{k}(x_1, y_1, \ldots, y_{k-1}) \ . $$
	The only difference is in the last generator $ h_n $, which becomes
	$$h_n(x_1, y_1, \ldots, y_n)= P_n(x_1, y_1, \ldots, y_{n-1})y_n - P_{n-1}(x_1, y_1, \ldots, y_{n-2})^2 - 1 \ .$$
	After renaming the variables, we obtain the assertion. 
\end{proof}

\begin{prop}
	Let {$ n \geq 3 $} and $K$ be a field with $\car(K) \neq 2$.
	The {variety} $\Spec(\mathcal{A}(C_n))$ is isomorphic to a {regular} hypersurface in $\A_K^{n+1}$.
\end{prop}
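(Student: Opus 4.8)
The plan is to start from the hypersurface presentation of Lemma~\ref{L:Cn_Pres}, writing $\cA(C_n) \cong K[z_1,\ldots,z_{n+1}]/\langle f\rangle$ with
\[
	f = P_n(z_1,\ldots,z_n)\, z_{n+1} - P_{n-1}(z_1,\ldots,z_{n-1})^2 - 1 \ ,
\]
and to show, via the Jacobian criterion exactly as in the $A_n$-case (recalling that the integer coefficients of the exchange relations let us work with ordinary partial derivatives), that $\Sing(\Spec(\cA(C_n)))=\varnothing$. Since $f$ is a nonconstant polynomial, $\Spec(\cA(C_n))$ is an $n$-dimensional hypersurface in $\A_K^{n+1}$, so the emptiness of its singular locus is the entire assertion.

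First I would exploit that $z_{n+1}$ occurs only linearly. As $\partial f/\partial z_{n+1}=P_n$, any singular point lies on $V(P_n)$, whence $f=0$ forces $P_{n-1}^2=-1$; in particular $P_{n-1}\neq 0$ at such a point. Next, $P_{n-1}$ does not involve $z_n$ and $\partial P_n/\partial z_n=P_{n-1}$ by Lemma~\ref{L:pnder}, so $\partial f/\partial z_n=P_{n-1}\,z_{n+1}$, and $P_{n-1}\neq 0$ gives $z_{n+1}=0$. Substituting this into the remaining derivatives yields $\partial f/\partial z_i=-2\,P_{n-1}\,\partial P_{n-1}/\partial z_i$ for $i\le n-1$; since $\car(K)\neq 2$ and $P_{n-1}\neq 0$, all of these vanish exactly when every $\partial P_{n-1}/\partial z_i=0$. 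Thus a singular point would be a critical point of the continuant $P_{n-1}$ at which $P_{n-1}^2=-1$.

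The crux is therefore the following claim: for $\car(K)\neq 2$ and every $m\ge 0$ there is \emph{no} point at which all $\partial P_m/\partial z_i$ vanish while $P_m^2=-1$; applied with $m=n-1$ this finishes the proof. I would prove it by descent by four on $m$. At such a point one has $\partial P_m/\partial z_1=P_{m-1}(z_2,\ldots,z_m)=0$ by Lemma~\ref{L:pnder}, and the regularity of $V(P_{m-1})$ from Lemma~\ref{L:Sing(Pn)}(1) then forces its first derivative $P_{m-2}(z_3,\ldots,z_m)\neq 0$; hence $\partial P_m/\partial z_2=z_1\,P_{m-2}(z_3,\ldots,z_m)=0$ gives $z_1=0$, and $z_m=0$ symmetrically via Lemma~\ref{L:pnsym}. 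Feeding $z_1=z_m=0$ into the recursion of Lemma~\ref{L:pnrecur} collapses $P_m(z)$ to $P_{m-4}(z_3,\ldots,z_{m-2})$, and rewriting the two factors in each $\partial P_m/\partial z_j$ ($3\le j\le m-2$) by the same recursion shows that $(z_3,\ldots,z_{m-2})$ is again a critical point of $P_{m-4}$ with the identical value, again squaring to $-1$. The base cases $m\in\{0,1,2,3\}$ are immediate: $P_0=1$ and the only critical point of $P_2$ gives value $-1$, so in both cases the square is $1\neq -1$ (here $\car(K)\neq 2$ is used), while $P_1$ and $P_3$ have no critical points at all.

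The main obstacle I expect is this descent lemma, specifically the bookkeeping that identifies the interior partial derivatives of $P_m$ (after imposing $z_1=z_m=0$) with those of the shorter continuant $P_{m-4}$; this is purely a matter of applying Lemma~\ref{L:pnder} and Lemma~\ref{L:pnrecur} carefully and introduces no new ideas. Everything else reduces to the linear elimination of $z_{n+1}$, a direct reading of the Jacobian matrix, and the elementary properties of continuants collected in Section~\ref{S:continuants}.
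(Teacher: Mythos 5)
Your proof is correct, and it replaces the paper's key induction by a genuinely different one. Both arguments start identically: from the presentation of Lemma~\ref{L:Cn_Pres}, the linearity in $z_{n+1}$ gives $P_n=0$, hence $P_{n-1}^2=-1$, hence $z_{n+1}=0$, reducing the whole statement to a fact about continuants in characteristic $\neq 2$. From there the paper fixes a square root $\lambda$ of $-1$ (after splitting off the case $-1\notin K^2$) and runs a one-sided induction --- its conditions (a$_k$), (b$_k$), (c$_k$) --- killing the variables $z_{n+1},z_n,z_{n-1},\dots$ from the top while tracking the values $(-1)^k\lambda$ of shorter continuants, with a separate parity endgame ($n$ even: $\mp\lambda\neq 0$; $n$ odd: $\lambda=\mp1$ contradicts $\lambda^2=-1$). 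You instead isolate a standalone claim --- $P_m$ has no critical point at which $P_m^2=-1$ --- and prove it by a symmetric descent by four: the extreme variables $z_1,z_m$ are forced to vanish, $P_m$ then collapses to $P_{m-4}(z_3,\ldots,z_{m-2})$, the interior partials of $P_m$ become exactly those of $P_{m-4}$, and the parity analysis is absorbed into the base cases $m\in\{0,1,2,3\}$. I checked the collapse identities ($P_m\mapsto P_{m-4}$ and the matching of partials via Lemma~\ref{L:pnder} and Lemma~\ref{L:pnrecur}); they are correct. Your route buys two things: it never names a square root of $-1$, so it applies verbatim to points with coordinates in any extension of $K$ (the paper's first case ``$-1\notin K^2$'' literally only excludes $K$-rational singular points; it is the computation in its second case that covers geometric points), and it yields a reusable continuant lemma in the spirit of Section~\ref{S:continuants}. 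The paper's route buys uniformity with its neighbouring proofs, reusing the same bookkeeping as Lemma~\ref{L:Sing(Pn)}(2) and Proposition~\ref{Prop:SingPn}.

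One citation should be repaired. You deduce $P_{m-2}(z_3,\ldots,z_m)\neq 0$ from $P_{m-1}(z_2,\ldots,z_m)=0$ by invoking the \emph{regularity} of $V(P_{m-1})$ from Lemma~\ref{L:Sing(Pn)}(1); but regularity only says that \emph{some} partial derivative of $P_{m-1}$ is nonzero at such a point, not the first one. What you actually need is that two nested consecutive continuants have no common zero, i.e.\ $\langle P_{m-1},P_{m-2}\rangle=\langle 1\rangle$. This is true and elementary --- if both vanish, the recursion of Lemma~\ref{L:pnrecur} forces all shorter ones to vanish, down to $P_0=1$ --- and it is precisely what the paper's \emph{proof} of Lemma~\ref{L:Sing(Pn)}(1) establishes, as opposed to its statement. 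So the step stands, but justify it by that recursion argument (here and in the symmetric step producing $z_m=0$) rather than by the regularity statement.
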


\begin{proof} 
	The proof follows the steps of the proof of Proposition~\ref{Prop:SingPn}.
	We apply the Jacobian criterion to the presentation $\mathcal{A}(C_n) \cong K[z_1, \ldots, z_{n+1}]/\langle h_n(z_1, \ldots, z_{n+1}) \rangle $ of Lemma~\ref{L:Cn_Pres}.
	Since $ \frac{\partial h_n}{\partial z_{n+1}} = P_n (z_1, \ldots, z_n) = 0 $, 
	the equation $ h_n = 0 $ for the singular locus is equivalent to 
	$ P_{n-1}(z_1, \ldots, z_{n-1})^2 = -1 $. 
	If $ -1 \notin K^2 $ is not a square in $ K $, the last equality cannot hold and we have shown that the claim.
	\\
	Assume $ -1 \in K^2 $ and let
	$ \lambda \in K \setminus \{ 0 \} $ be such that $ \lambda^2 = - 1 $ and 
	$ P_{n-1}(z_1, \ldots, z_{n-1}) = \lambda $. 
	Using Lemma~\ref{L:pnder}, we get 
	\[  
	0 = \frac{\partial h_n}{\partial z_n} = P_{n-1} (z_1, \ldots, z_{n-1}) z_{n+1}
	=  \lambda z_{n+1} 
	\]
	and thus $ z_{n+1} = 0 $. 
	\\
	We prove by induction on $ k $ that $ h_n = \frac{\partial h_n}{\partial z_{n+1}} = \ldots  \frac{\partial h_n}{\partial z_{n-2k}} = 0 $ imply
	\begin{enumerate} 
		\item[(a$_k$)] 
		$ z_{n+1} = \cdots = z_{n-2k+1} = 0 $,
		
		\item[(b$_k$)] 
		$ P_{n-2k-1}(z_1, \ldots, z_{n-2k-1}) = (-1)^{k} \lambda $,
		and 
		
		\item[(c$_k$)] 
		$ P_{n-2k}(z_{1}, \ldots, z_{n-2k}) =  0 $.
	\end{enumerate}
	As we have seen above, the statements are true for $ k = 0 $. 
	Let us deduce  (a$_{k+1}$), (b$_{k+1}$), (c$_{k+1}$) from (a$_{k}$), (b$_{k}$), (c$_{k}$).
	First, (a$_k$) and Lemma~\ref{pnquad} imply
	$ P_{2k}(z_{n-2k}, \ldots, z_{n-1}) = \pm 1  $.
	Using $ z_{n+1} = 0 $, (b$_0$), and Lemma~\ref{L:pnder},  
	we have
	\[
	0 = \frac{\partial h_n}{\partial z_{n-2k-1}}
	= - 2 P_{n-1} (z_1, \ldots, z_{n-1}) \frac{\partial P_{n-1} (z_1, \ldots, z_{n-1}) }{\partial z_{n-2k-1}} =
	\]
	\[
	= (-2) \lambda P_{n-2k-2} (z_1, \ldots, z_{n-2k-2})
	P_{2k}(z_{n-2k}, \ldots, z_{n-1}) = 
	( \mp 2) \lambda P_{n-2k-2} (z_1, \ldots, z_{n-2k-2}) 
	\ ,
	\]
	which provides (c$_{k+1} $) as $ \car(K) \neq 2 $. 
	If we apply the recursion of Lemma~\ref{L:pnrecur} for $ P_{n-2k}(z_{1}, \ldots, z_{n-2k}) $ and use (b$_k$) and (c$_{k+1} $),
	we obtain $ z_{n-2k} = 0 $. 
	On the other hand, Lemma~\ref{L:pnrecur} applied for $ P_{n-2k-1} (z_1, \ldots, z_{n-2k-1}) $, 
	(b$_{k} $),
	and (c$_{k+1} $) provide (b$_{k+1} $).  
	It remains to prove $ z_{n-2k-1} = 0 $. 
	For this, we consider the derivative of $ h_n $ by $ z_{n-2k-2} $
	(using Lemma~\ref{L:pnder}) and apply (b$_0$), (b$_{k+1}$), 
	as well as $ P_{2k+1} (z_{n-2k-1}, 0, \ldots, 0 ) = \pm z_{n-2k-1} $ (by Lemma~\ref{pnquad}). 
	
	Next, we distinguish two cases:
	If $ n = 2m $ for some $ m \in \ZZ_+$,
	then we have not used the derivative by $ z_1 $ in the induction. 
	Using $ z_{n+1} = 0 $ and (b$_0$), we obtain the contradiction 
	\[
	0 = 
	\frac{\partial h_n}{\partial z_1} 
	=
	-2 \lambda P_{2m-2} (z_2, z_3,\ldots, z_{n-1}) = \mp \lambda \neq 0 
	\ ,   
	\]
	where the last equality holds by applying (a$_{m-1}$) and Lemma~\ref{pnquad}.
	\\
	Assume $ n = 2m +1 $, for some $ m \in \ZZ_+ $. 
	Statement (a$_{m}$) implies $ z_2 = 0 $
	and (b$_{m-1}$) states $ P_2 (z_1,z_2) = \pm \lambda $. 
	This leads to the equality $ \lambda = \mp 1 $,
	which contradicts the property $ \lambda^2 = - 1 $ 
	(see the definition of $ \lambda $ above).  
\end{proof}

Let us discuss the case $ \cchar(K) = 2 $, 
which turns out to be more complicated.

\begin{prop}
	\label{Prop:Cn_p2}
	Let $ K $ be a field of characteristic two. 
	The singular locus of 
	$\Spec(\cA(C_n))  $
	is isomorphic to $ \Spec(\cA(A_{n-2})) $ and is of dimension $ n - 2 $. 
	Moreover, we have: 	
	\begin{enumerate}[leftmargin=*,label=(\arabic*)]
		\item 
		If $ n \equiv 0 \mod 2 $, then $ \Sing(\cA(C_n)) $ is {regular} and 
		$ \Spec(\mathcal{A}(C_n)) $ is locally at the singular locus isomorphic to a cylinder over a $ A_1 $-hypersurface singularity in $ \A_K^3 $. 
		In particular, the blowup with center 
		$ \Sing(\cA(C_n)) $ resolves the singularities of 
		$ \Spec(\mathcal{A}(C_n)) $.

		\item 
		If $ n \equiv 1 \mod 2 $ {and $ n > 3 $},
		then $ \Sing(\cA(C_n)) $ has an isolated singularity of type $ A_1 $ at the origin. 
		Locally at the origin, 
		$ \Spec(\mathcal{A}(C_n)) $ is isomorphic to a hypersurface singularity of the form
		{(where $ m $ is defined by $ n = 2m+1 $)}
		\[  
		\Spec ( k[x_1, \ldots, x_{2m}, y, z] / \langle \, yz + \big( \sum_{i=1}^{m} x_{2i-1} x_{2i} \big)^2  \ \rangle 
		\ ,
		\ \ \ \mbox{where } n = 2m +1, 
		\]
		while locally at a singular point different from the origin,
		$ \Spec(\mathcal{A}(C_n)) $ is again isomorphic to a cylinder over the $ A_1 $-hypersurface singularity given by $ V(x^2 + yz)  \subset \A_K^3 $. 
		\\
		The singularity $ \Spec(\mathcal{A}(C_n)) $ is resolved by three blowups;
		the first center is the origin,
		the second is the strict transform of the original singular locus,
		and 
		the third center is the strict transform of an exceptional component created after the first blowup.
		
		\item 
		If $ n = 3 $,
			then $ \Sing(\cA(C_n)) $ is isomorphic to two {regular} lines intersecting transversally at the origin. 
			All other statements of (2) remain true for $ m = 1 $.
	\end{enumerate}
\end{prop}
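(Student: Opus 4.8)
The plan is to work throughout with the hypersurface presentation $\Spec(\cA(C_n)) \cong V(h_n) \subset \A^{n+1}_K$ from Lemma~\ref{L:Cn_Pres}, where $h_n = P_n(z_1,\ldots,z_n)\,z_{n+1} - P_{n-1}(z_1,\ldots,z_{n-1})^2 - 1$, and to exploit the decisive feature of characteristic two: the derivatives of the square $P_{n-1}^2$ all vanish. Concretely $\partial h_n/\partial z_{n+1} = P_n$ and, for $i \le n$, $\partial h_n/\partial z_i = z_{n+1}\,\partial P_n/\partial z_i$, the contribution of $P_{n-1}^2$ dying since $2=0$. First I would apply the Jacobian criterion: a singular point must satisfy $P_n = 0$ and $z_{n+1}\,\partial P_n/\partial z_i = 0$ for all $i$. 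Since $V(P_n)$ is regular by Lemma~\ref{L:Sing(Pn)}(1), at a zero of $P_n$ some $\partial P_n/\partial z_i$ is a unit, forcing $z_{n+1}=0$. Feeding $z_{n+1}=0$ and $P_n=0$ back into $h_n=0$ gives $P_{n-1}^2 = 1$, i.e.\ $(P_{n-1}+1)^2 = 0$, so $P_{n-1}=1$ on the reduced singular locus.

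Next I would identify this locus. Using the recursion $P_n = z_n P_{n-1} - P_{n-2}$ (Lemma~\ref{L:pnrecur}) together with $P_{n-1}=1$, the equation $P_n = 0$ becomes $z_n = P_{n-2}(z_1,\ldots,z_{n-2})$, which merely eliminates $z_n$. What remains is $V(P_{n-1}(z_1,\ldots,z_{n-1}) - 1)$ in the variables $z_1,\ldots,z_{n-1}$, and by Lemma~\ref{L:presentationAn} this is exactly $\Spec(\cA(A_{n-2}))$, of dimension $n-2$. This proves the first assertion, and it reduces the case analysis to known facts about $\Spec(\cA(A_{n-2}))$: by Corollary~\ref{Cor:An} (characteristic two) it is regular when $n-2$ is even (case (1)), has an isolated $A_1$-point at the origin when $n-2$ is odd and $n>3$ (case (2)), and for $n=3$ degenerates to $V(P_2 - 1) = V(z_1 z_2)$, two transversal lines (case (3)).

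For the local classification I would note that $P_{n-1}=1$ is a unit along the whole singular locus, including the origin, where $P_{n-1}(0)=1$ by Lemma~\ref{pnquad} in characteristic two. Since $\partial P_n/\partial z_n = P_{n-1}$ (Lemma~\ref{L:pnder}), the function $w := P_n$ is therefore a local coordinate replacing $z_n$, and in these coordinates $h_n = w\,z_{n+1} - (P_{n-1}+1)^2$. At a point of the singular locus where $\Spec(\cA(A_{n-2}))$ is regular, the gradient of $P_{n-1}$ is nonzero, so $v := P_{n-1}+1$ is a further local coordinate and $h_n = w\,z_{n+1} - v^2$, a cylinder over the $A_1$-surface $V(v^2 - w z_{n+1}) \subset \A^3_K$; blowing up the regular singular locus resolves it. This settles case (1) entirely and the non-origin points of case (2). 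At the origin in case (2), $v$ is no longer a coordinate, but Proposition~\ref{Prop:SingPn} supplies a local change of $z_1,\ldots,z_{n-1}$ turning $P_{n-1}+1 = P_{n-1}-1$ into $\sum_{i=1}^m x_{2i-1}x_{2i}$; substituting gives $h_n = yz + \big(\sum_{i=1}^m x_{2i-1}x_{2i}\big)^2$ with $y := w$ and $z := z_{n+1}$, the announced normal form.

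It remains to verify the resolution statements, and this is where I expect the real work. For case (1) and the non-origin locus of case (2) the cylinder-over-$A_1$ description makes the single blowup of the regular center transparent, and the same holds for the two-line situation of case (3) away from the origin. The genuinely delicate point is resolving the origin in case (2): the hypersurface $yz = S^2$ with $S = \sum_{i=1}^m x_{2i-1}x_{2i}$ is strictly more degenerate than an $A_1$-point, and I would carry out the three announced blowups explicitly — blow up the origin, then inspect each chart to see that the residual singularities lie along the strict transform of the old singular locus and along a single exceptional component, and finally blow up those strict transforms — tracking charts and strict transforms to confirm regularity and normal crossings at the end. Confirming that exactly these three centers suffice, in the prescribed order, and that no new singularities are introduced is the main obstacle; all earlier steps reduce to bookkeeping with the continuant identities of Section~\ref{S:continuants}.
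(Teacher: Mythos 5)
Your determination of the singular locus and of the local normal forms is correct and, in substance, identical to the paper's argument: the paper also works with the presentation of Lemma~\ref{L:Cn_Pres}, rewrites $h_n = P_n z_{n+1} + (P_{n-1}+1)^2$ in characteristic two, and cuts out the singular locus as $V\big(z_{n+1},\, z_n + P_{n-2},\, P_{n-1}+1\big) \cong \Spec(\cA(A_{n-2}))$. Even your coordinate choice agrees with the paper's: the paper sets $u_n := z_n + P_{n-2}$ and then $w_n := u_n(1+f_{n-2}) + f_{n-2}P_{n-2}$, and since $1+f_{n-2} = P_{n-1}$ in characteristic two this $w_n$ is exactly your $w = P_n$. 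The only deviations are harmless: you force $z_{n+1}=0$ from the smoothness of $V(P_n)$ (Lemma~\ref{L:Sing(Pn)}(1)), where the paper uses $\partial h_n/\partial z_n = z_{n+1}P_{n-1}$ together with $P_{n-1}=1$ on $V(f_{n-2})$; and you make the $n=3$ case explicit as $V(z_1z_2)$, where the paper only says ``explicit computation''.

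The genuine gap is the last claim of part (2) (inherited by part (3)): that $\Spec(\cA(C_n))$ is resolved by exactly three blowups with the stated centers. You outline the correct plan but explicitly defer its execution (``this is where I expect the real work''), and that execution is roughly half of the paper's proof. Concretely, after blowing up the origin one must: (i) check that in the $Z_n$- and $Z_{n+1}$-charts (your $y$- and $z$-charts) the strict transform is already regular, because there $\partial h_n'/\partial z_{n+1} = 1 + z_n^2 f'(1+P_{n-2}')$ is a unit along the exceptional divisor, $f'$ denoting the strict transform of $f_{n-2}$; (ii) show that in each chart $Z_i$ with $i\le n-1$ the strict transform can be written as $z_i^2 f'^2 + w_n z_{n+1}$ for a local coordinate $w_n$, so that its singular locus is precisely $D' \cup E$, where $D'$ is the strict transform of the original singular locus and $E = V(z_i, w_n, z_{n+1})$ is a regular center inside the exceptional divisor, both of cylinder-over-$A_1$ type away from $D'\cap E$; (iii) verify that the second blowup, along $D'$, introduces no new singular points, so that the remaining singular locus is the strict transform $E''$ of $E$, which is locally of the form $x^2 = yz$ and is resolved by the third blowup; and (iv) observe that $D'$ and $E''$ are well-defined global centers (being, respectively, the strict transform of the singular locus and the singular locus of the strict transform). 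None of these steps is automatic--in particular the hypersurface $yz + \big(\sum_{i=1}^m x_{2i-1}x_{2i}\big)^2$ is, as you note, worse than an $A_1$-point, and one cannot quote any earlier lemma for it--so as it stands your proposal establishes the description of $\Sing(\Spec(\cA(C_n)))$ and the local models, but not the desingularization statement.
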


\begin{Bem}
	Observe that case (2) is not among the simple singularities.
\end{Bem}

\begin{proof}[Proof of Proposition~\ref{Prop:Cn_p2}]
	As we have seen above, $ \cA(C_n) $ is isomorphic to the hypersurface determined by 
	$ h_n (z_1, \ldots, z_{n+1}) = P_n(z_1, \ldots, z_n)z_{n+1} - P_{n-1}(z_1, \ldots, z_{n-1})^2 -1 $.
	Since $ \cchar(K) = 2 $,
	we can rewrite this as
	\[
	h_n (z_1, \ldots, z_{n+1}) = P_n(z_1, \ldots, z_n)z_{n+1} + \big( 
	P_{n-1}(z_1, \ldots, z_{n-1}) + 1 \big)^2
	\ .
	\]
	The partial derivatives are 
	\[
	\begin{array}{rcl}
	\displaystyle
	\frac{\partial h_n}{\partial z_{n+1}}
	& = & 
	P_n(z_1, \ldots, z_n)
	\ , 
	\\[12pt] 
	\displaystyle
	\frac{\partial h_n}{\partial z_{k}}
	& = & 
	\displaystyle 
	z_{n+1} \frac{\partial P_n(z_1, \ldots, z_n) }{\partial z_k} 
	\ ,
	\ \ \ \mbox{ for }  k \in \{ 1, \ldots, n \}
	\ .
	\end{array}
	\]
	This implies that we can replace the condition $ h_n(z_1, \ldots, z_{n+1}) = 0 $ by 
	\[  
	f_{n-2}(z_1, \ldots, z_{n-1}) = P_{n-1}(z_1, \ldots, z_{n-1}) + 1   = 0 
	\] 
	when determining the singular locus,
	where $ f_{n-2}(z_1, \ldots, z_{n-1}) $ is the polynomial
	providing a hypersurface presentation for the {variety} $ \Spec(\cA(A_{n-2})) $
	(see Lemma~\ref{L:presentationAn} and use $ \cchar(K) = 2 $).
	\\
	Since the second factor of 
	$ \frac{\partial h_n}{\partial z_{n}} = z_{n+1} P_{n-1} (z_1, \ldots, z_n) $ 
	cannot vanish if $ f_{n-2}  = 0 $
	and since $ P_n (z_1, \ldots, z_n) = z_n P_{n-1} (z_1, \ldots, z_{n-1}) + P_{n-2} (z_1, \ldots, z_{n-2}) $,
	we obtain 
	\[
	\Sing (\cA(C_n)) 
	= V (u_n, z_{n+1}, f_{n-2}(z_1, \ldots, z_{n-1})) =:D
	\ , 
	\]
	where we introduce $ u_n := z_n + P_{n-2} (z_1, \ldots, z_{n-2}) $. 
	Observe that
	\[ 		
	h_n = f_{n-1}^2 + z_{n+1} \Big( u_n (1 + f_{n-1}) + f_{n-1} P_{n-2} \Big) \ .
	\]

	Corollary~\ref{Cor:An}(1') implies that $ \Sing (\cA(C_n)) $ is singular if and only if $ n \equiv 1 \mod 2 $.
	\\
	First assume $ n \equiv 0 \mod 2 $. 
	Then $ V (f_{n-1}) $ is {regular} and we may take $ u_{n-1} := f_{n-1} $ as a local variable locally at the singular locus. 
	Furthermore, $ 1 + f_{n-1} $ is a unit and
	we may introduce the local variable $ w_n :=  u_n (1 + f_{n-1}) + f_{n-1} P_{n-2} $.
	Hence, $ h_n = u_{n-1}^2 + z_{n+1} w_{n} $ and claim (1) follows. 
	\\
	Suppose that $ n \equiv 1 \mod 2 $.
	By Corollary~\ref{Cor:An}(2'), 
	$ \Sing (\cA(C_n)) $ has an isolated singularity of type $ A_1 $ at the origin $ V (z_1, \ldots, z_{n-1}, u_n, z_{n+1} ) $.
	The statement about the the type of singularity away from origin follows with the same argument as in the case $ n \equiv 0 \mod 2 $.
	\\
	Let us study the situation at the origin.
	Write $ n - 2 = 2m - 1 $, for $ m \in \ZZ_+ $.
	As we have seen in the proof of Proposition~\ref{Prop:SingPn},
	there is a coordinate transformation $ (z_1, \ldots, z_{2m}) \mapsto (t_1, \ldots, t_{2m}) $ such that  
	$ f_{n-2} (t_1, \ldots, t_{2m}) = \sum_{i=1}^{2m} t_{2i-1} t_{2i} $, 
	locally around the origin.
	Furthermore, we can again introduce $ w_n $ above such that, 
	locally at the origin, we obtain 
	$ h_n = ( \sum_{i=1}^{m} t_{2i-1} t_{2i}  )^2 + w_n z_{n+1} $, as desired. 
	
	Let us discuss the desingularization of the {variety} $ \Spec(\cA(C_n)) $. 
	First, we blow up with center the origin.
	In order to simplify the presentation of the charts, we abuse notation and write $ z_n := u_n $.
	
	\change{We fix the notation when considering explicit charts of a blowup: 
		(We only discuss this for the blowup of the origin, but it can be adapted for any blowup with a smooth center.) 
		Recall that the blow-up in the origin of $ \A^{n+1}_K = \Spec(R) $,
		where $ R := K[z_1, \ldots, z_{n+1}] $,
		 is given by $ \operatorname{Bl}_0(\A^{n+1}_K) := \operatorname{Proj}(R[Z_1, \ldots, Z_{n+1}]/ \langle z_i Z_j - z_j Z_i \mid i, j \in \{ 1, \ldots, n +1 \}  \rangle ) $,
	 where $ (Z_1, \ldots, Z_{n+1} ) $ are projective variables.
	 In particular, $ \operatorname{Bl}_0(\A^{n+1}_K) $ is covered by the open subsets $ D_i $  given by $ Z_i \neq 0 $, for $ i \in \{ 1, \ldots, n + 1 \} $.
 	We also say that $ D_i $ is the {\em $ Z_i $-chart}.
 	\\
 	Fix $ i \in \{ 1, \ldots, n +1 \} $. 
 	Since $ z_i Z_j - z_j Z_i = 0 $, for $ j \neq i $, we obtain that
 	$ z_j = z_i \frac{Z_j}{Z_i} $ in the $ Z_i $-chart.
 	This provides that the $ Z_i $-chart is isomorphic to $ \Spec (K[z_1', \ldots, z_{n+1}'] ) $, where we set $ z_i' := z_i $ and $ z_j' := \frac{Z_j}{Z_i} $ for $ j \neq i $.
	 In order to keep the notation light, we abuse it by using the same letter for the variables after the blowup as before, i.e., the transformation of the variables will be written as $ z_j = z_i z_j $ for $ j \neq i $.}
	
	\noindent
	\underline{$ Z_n $-chart.}
	We have $ z_i = z_n z_i $ for every $ i \neq n $.
	The strict transform of $ h_n $ is 
	\[
	h_n' = f_{n-1}'^2 z_n^2  + z_{n+1} ( 1 + z_n^2 f_{n-1}' + z_n^2 f_{n-1}' P_{n-2}' ) \ ,
	\] 
	where we denote by $ f_{n-1}' $ (resp.~$ P_{n-2}' $) the strict transform of $ f_{n-2} $ 
	(resp.~$ P_{n-2} $). 
	The strict transform $ D' $ of $ D $ is empty in this chart.
	Hence, the only singularities which may appear have to be contained in the exceptional divisor $ V (z_n) $. 
	On the other hand, we have 
	$ \frac{\partial h_n'}{\partial z_{n+1}'} = 1 + z_n^2 f_{n-1}' + z_n^2 f_{n-1}' P_{n-2}'  $,
	which 
	implies that the strict transform of $ \Spec (\cA (C_n)) $ is {regular} in this chart.
	
	The analogous argument applies for the $ Z_{n+1} $-chart.
	
	\noindent
	\underline{$ Z_1 $-chart.}
	(All other charts remaining are analogous.)
	We get $ z_i = z_1 z_i $ for every $ i \neq 1 $
	and 
	\[
	h_n' = f_{n-1}'^2 z_1^2  + z_{n+1} \Big( z_n ( 1 + z_1^2 f_{n-1}')  + z_1^2 f_{n-1}' P_{n-2}' \Big) \ .
	\] 
	By Corollary~\ref{Cor:An}(2') $ V(f_{n-1}') $ is {regular} and 
	thus the same is true for $ D' $.
	Lemma~\ref{pnquad} implies that 
	$ f_{n-1}' = z_2 + z_4 + \cdots + z_{n-1} + H $ 
	for some $ H \in \langle z_1, z_2, \ldots, z_{n-1} \rangle^2 $.
	In particular, $ V(f_{n-1}') $ is transversal to $ V(z_1 z_n z_{n+1} ) $
	and we may introduce the variable 
	$ u_2 := f_{n-1}' $ locally at $ D' $.    
	\\
	Since any newly created singularities have to be contained in the exceptional divisor $ V(z_1) $, 
	the element $ 1 + z_1^2 f_{n-1}' $ 
	is invertible locally at the singular locus of $ V ( h_n' ) $.
	We introduce the local variable $ w_n := z_n ( 1 + z_1^2 f_{n-1}')  + z_1^2 f_{n-1}' P_{n-2}' $
	and we get 
	\[
	h_n' = f_{n-1}'^2 z_1^2 + w_n z_{n+1}.
	\]  
	Obviously, we have 
	$ \Sing (V(h_n')) = D' \cup V (z_1, w_n, z_{n+1} ) $. 
	Both components are {regular} and $ V(h_n') $ is an $ A_1 $-singularity at every point expect their intersection.
	Let us define $ E := V (z_1, w_n, z_{n+1} ) $. 
	\\
	Next, we blow up with center $ D' $.
	Observe that this is a well-defined global center, which is seen in any $ Z_i $-chart with  $i \in \{ 1 ,\ldots, n-1 \} $
	as it is the strict transform of $ \Sing(\Spec(\cA(C_n))) $. 
	There are no new singularities contained in the exceptional divisor of the second blowup
	and hence the singular locus of $ V(h_n'') $ has to be the strict transform $ E '' $ of $ E $
	(where $ h_n'' $ is the strict transform of $ h_n' $ after the second blowup).
	Locally at $ E'' $, the hypersurface is given by an equation of the form $ x^2 - yz = 0 $.
	Therefore, after blowing up $ E''$ all singularities are resolved.
	Again observe that $ E'' $ is a well-defined global center at this step of the resolution process since it is the singular locus of the strict transform after the second blowup. \\
	Case (3) is seen by explicit computation.
\end{proof}

\medskip

\subsection{$\boldsymbol{F_4}$ and $\boldsymbol{G_2}$ cluster algebras}

Let us discuss the remaining cases of $ \cA(F_4) $ and $ \cA(G_2 ) $.

For the cluster algebra $ \cA(F_4) $, we pick the exchange matrix $ B = \begin{pmatrix} 0 & 1 & 0 &0  \\ -1 & 0 &1 & 0 \\ 0 & -2 & 0 & 1\\ 0 & 0 & -1 & 0 \end{pmatrix}$ 
(cf.~\cite[Exercise~4.4.12]{FWZ2017}),
whose {corresponding Dynkin diagram is} of type $F_4$; 
\[
\begin{tikzpicture}[>=stealth',shorten >=1pt,auto,node distance=2cm, thick,main node/.style={circle,draw}]

\node[main node] at (0,0) { \ \ \ };
\node[main node] at (2,0) { \ \ \ };
\node[main node] at (4,0) { \ \ \ };
\node[main node] at (6,0) { \ \ \ };

\draw (0.25,0) -- (1.75,0);
\draw (4.25,0) -- (5.75,0);
\draw[->] (2.25,0.1) -- (3.75,0.1);
\draw[->] (2.25,-0.1) -- (3.75,-0.1);

\end{tikzpicture}
\ .
\]

\begin{lemma}
	\label{Lem:F4}
	For any field $ K $, 
	the {variety} $ \Spec (\cA (F_4)) $ is isomorphic to a {regular} hypersurface in $ \A_K^5 $.
\end{lemma}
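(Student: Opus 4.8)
The plan is to follow exactly the pattern of the $A_n$- and $E$-cases: first peel off variables with the exchange relations until a hypersurface presentation remains, and then verify regularity with the Jacobian (Zariski) criterion, using the continuant derivative formula of Lemma~\ref{L:pnder}. Since $\Gamma(B)$ is acyclic of type $F_4$, Theorem~\ref{Thm:clus=low} together with Lemma~\ref{L:BFZ_Cor1.17} gives $\cA(F_4) \cong K[x_1,\ldots,x_4,y_1,\ldots,y_4]/\langle R_1, R_2, R_3, R_4\rangle$, where reading off the columns of $B$ (the weight-two edge producing the square in $R_2$) yields
\[
  R_1 = x_1 y_1 - x_2 - 1, \quad R_2 = x_2 y_2 - x_1 - x_3^2, \quad R_3 = x_3 y_3 - x_2 - x_4, \quad R_4 = x_4 y_4 - x_3 - 1 .
\]

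Next I would peel the chain from the heavy end. In $R_4$, $R_3$, $R_2$ the variables $x_3$, $x_2$, $x_1$ respectively occur linearly with coefficient $-1$, so each may be eliminated by a polynomial substitution; such a substitution is an isomorphism of the ambient affine spaces and so affects neither the variety nor its regularity. Concretely $x_3 = x_4 y_4 - 1 = P_2(x_4, y_4)$; then $R_3$ gives $x_2 = x_3 y_3 - x_4 = P_3(x_4, y_4, y_3)$ by the recursion and symmetry of Lemmas~\ref{L:pnrecur} and~\ref{L:pnsym}; and $R_2$ gives $x_1 = x_2 y_2 - x_3^2$. Substituting these into $R_1$ leaves the single relation $F := x_1 y_1 - x_2 - 1$ in the five variables $x_4, y_1, y_2, y_3, y_4$, so that $\cA(F_4) \cong K[x_4, y_1, y_2, y_3, y_4]/\langle F\rangle$ is a hypersurface in $\A^5_K$, settling the first half of the claim.

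For regularity I would apply the Jacobian criterion in Zariski's form \cite{Zar47}; since all coefficients lie in $\ZZ$, no $p$-basis of $K$ is needed, so the argument is valid over every field. Differentiating $F = x_2 y_1 y_2 - x_3^2 y_1 - x_2 - 1$ and using Lemma~\ref{L:pnder} on $x_2 = P_3(x_4,y_4,y_3)$ and $x_3 = P_2(x_4,y_4)$ (so that $\partial_{x_4}x_2 = y_3 y_4 - 1$, $\partial_{y_4}x_2 = x_4 y_3$, $\partial_{y_3}x_2 = x_3$, and $\partial_{y_4}x_3 = x_4$), the five partials are
\[
  \frac{\partial F}{\partial y_1} = x_1, \qquad \frac{\partial F}{\partial y_2} = x_2 y_1, \qquad \frac{\partial F}{\partial y_3} = x_3(y_1 y_2 - 1),
\]
\[
  \frac{\partial F}{\partial y_4} = x_4 y_3(y_1 y_2 - 1) - 2 x_3 x_4 y_1, \qquad \frac{\partial F}{\partial x_4} = (y_3 y_4 - 1)(y_1 y_2 - 1) - 2 x_3 y_1 y_4 .
\]
At a hypothetical singular point I would run the chain: $\partial_{y_1}F = 0$ forces $x_1 = 0$, whence $F = 0$ gives $x_2 = -1$; then $\partial_{y_2}F = x_2 y_1 = 0$ forces $y_1 = 0$ (as $x_2 = -1$ is a unit); then $\partial_{y_3}F = -x_3 = 0$ forces $x_3 = 0$. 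With $y_1 = 0$ and $x_3 = 0$ the last two partials reduce to $\partial_{x_4}F = 1 - y_3 y_4$ and $\partial_{y_4}F = -x_4 y_3$; the former forces $y_3 y_4 = 1$ (so $y_3 \neq 0$) and the latter then forces $x_4 = 0$. But then $x_3 = x_4 y_4 - 1 = -1 \neq 0$, contradicting $x_3 = 0$. Hence $\Spec(\cA(F_4))$ has no singular point and is regular.

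The only place a characteristic could interfere is the term $x_3^2$ from the double edge, whose derivative contributes the summands $-2x_3 x_4 y_1$ and $-2 x_3 y_1 y_4$ to $\partial_{y_4}F$ and $\partial_{x_4}F$. The pleasant feature of the chain above is that by the time these two partials are invoked one already knows $y_1 = 0$ and $x_3 = 0$, so those summands vanish identically; the argument is therefore uniform in $\car(K)$, covering in particular $\car(K) = 2$ and $\car(K) = 3$. I expect the main (modest) obstacle to be the bookkeeping: organizing the elimination so that $F$ and its partials stay in continuant form, and checking the three derivative identities for $P_3$ against Lemma~\ref{L:pnder}.
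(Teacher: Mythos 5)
Your proposal is correct and follows essentially the same route as the paper: the acyclic presentation with the four exchange relations, elimination of $x_1, x_2, x_3$ to obtain a hypersurface in $\A_K^5$ (your polynomial $F$ expands to exactly the paper's quintic under the renaming $x=x_4$, $y=y_1$, $z=y_2$, $w=y_3$, $t=y_4$), and then the Jacobian criterion. The only difference is one of exposition: the paper merely asserts that the Jacobian criterion gives regularity, whereas you keep $F$ in continuant form and spell out the characteristic-free chain of deductions, which is a welcome amplification rather than a deviation.
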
 

\begin{proof} 
	The underlying graph $ \Gamma(B) $ is acyclic and 
	thus $ \cA(F_4 ) $ is isomorphic to its lower bound cluster algebra.
	Similar as above we obtain the presentation
	\[ 
	\cA(F_4) \cong 
	K[x_1, \ldots, x_4, y_1, \ldots, y_4]/ \langle x_1y_1 -x_2 -1, x_2y_2 -x_1 -x^2_3, x_3y_3 -x_2 -x_4, x_4y_4-x_3-1 \rangle \ . 
	\]
	By eliminating $ x_1, x_2, x_3 $, this can be simplified to
	\[
	\cA(F_4) \cong K[x,y,z,w,t]/\langle  xyzwt-x^2yt^2-xyz-yzw+2xyt-xwt+x-y+w-1 \rangle  \ .
	\]
	Hence, $\Spec(\mathcal{A}(F_4))$ is isomorphic to a hypersurface in $\A^5_K$.
	Moreover, the Jacobian criterion shows that the latter is {regular}.
\end{proof}

\medskip

Next, let us come to $ \cA(G_2 ) $. 
A possible exchange matrix is $ B = \begin{pmatrix} 0 & 1 \\ -3 & 0 \end{pmatrix} $
(cf.~\cite[\S~5.7]{FWZ2017}) 
and the {corresponding Dynkin diagram is} of type $G_2$; 
\[
\begin{tikzpicture}[->,>=stealth',shorten >=1pt,auto,node distance=2cm, thick,main node/.style={circle,draw}]

\node[main node] at (0,0) { \ \ \ };
\node[main node] at (2,0) { \ \ \ };

\draw (0.25,0) -- (1.75,0);
\draw (0.25,0.15) -- (1.75,0.15);
\draw (0.25,-0.15) -- (1.75,-0.15);

\end{tikzpicture}
\ .
\]

\begin{lemma}
	\label{Lem:G2}
	Let $ K $ be any field. 
	The {variety} $ \Spec (\cA (G_2)) $ is isomorphic to a hypersurface in $ \A_K^3 $.
	\begin{enumerate}[leftmargin=*,label=(\arabic*)]
		\item 
		If $ \car(K) \neq 3 $, then $ \Spec ( \cA(G_2)) $ is {regular}.

		\item 
		If $ \car(K) = 3 $, then $ \Spec ( \cA(G_2)) $ has an isolated singularity of type $ A_2 $ at a closed point.
		In particular, the singularities of the {variety} are resolved by two point blowups.  
	\end{enumerate}
\end{lemma}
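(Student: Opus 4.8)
The plan is to follow the same strategy as in the previous cases: first reduce $\cA(G_2)$ to a hypersurface presentation, then apply the Jacobian criterion, and finally---in the singular characteristic---identify the type by an explicit local change of variables. Since the quiver $\Gamma(B)$ attached to the exchange matrix $B$ consists of a single arrow $1 \to 2$ and is therefore acyclic, Theorem~\ref{Thm:clus=low} gives $\cA(G_2) = \cL(G_2)$ and Lemma~\ref{L:BFZ_Cor1.17} ensures that the relation ideal is generated by the two exchange relations~\eqref{exchangeskewsymmetrizable}. Reading these off the columns of $B$ yields $x_1 x_1' = 1 + x_2^3$ and $x_2 x_2' = 1 + x_1$. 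As in the proof of Lemma~\ref{L:presentationAn}, the second relation lets me eliminate $x_1 = x_2 x_2' - 1$; substituting into the first collapses the presentation to a single equation, and after renaming the variables I expect to obtain
$$\cA(G_2) \cong K[x,y,z]/\langle\, xyz - x^3 - y - 1 \,\rangle,$$
which already proves that $\Spec(\cA(G_2))$ is a hypersurface in $\A_K^3$.

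Next I would apply the Jacobian criterion directly to $f := xyz - x^3 - y - 1$; note that the term $x^3$, coming from the triple edge, means the continuant lemmas of Section~\ref{S:continuants} do not apply verbatim, so the computation is done by hand. The partial derivatives are $f_x = yz - 3x^2$, $f_y = xz - 1$, and $f_z = xy$. From $f_y = 0$ one gets $xz = 1$, so $x \neq 0$; then $f_z = xy = 0$ forces $y = 0$, whence $f_x = 0$ reduces to $-3x^2 = 0$. If $\car(K) \neq 3$ this forces $x = 0$, contradicting $x \neq 0$, so the singular locus is empty and $\Spec(\cA(G_2))$ is regular, giving part~(1). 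If $\car(K) = 3$ the term $-3x^2$ vanishes identically, and the surviving conditions $xz = 1$ and (using $y = 0$) $x^3 = -1$ have, by the Frobenius identity $x^3 + 1 = (x+1)^3$ in characteristic three, the single solution $(x,y,z) = (-1,0,-1)$; hence there is exactly one, isolated, singular point.

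The remaining and most delicate step is to pin down the type of this singularity in characteristic three, which I expect to be the main obstacle, precisely because of the characteristic-specific behaviour of the cube. I would translate the singular point to the origin via the substitution $x \mapsto x-1$, $z \mapsto z-1$ and expand $f$ using $(x-1)^3 = x^3 - 1$; a short computation should give $f = xyz - xy - yz - x^3$, whose quadratic part $-y(x+z)$ is a nondegenerate hyperbolic form (it is the form $xy$ up to a linear change and requires no division by $2$, hence is valid in every characteristic). Grouping $f = -y\,(x + z - xz) - x^3$ and setting $b := x + z - xz$ produces an honest local coordinate system $(x,y,b)$, since $\partial b/\partial z = 1 - x$ is a unit after localizing at the origin; in these coordinates $f = -x^3 - yb$, which after an obvious sign rescaling is exactly the normal form $z^3 + xy$. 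Thus $\Spec(\cA(G_2))$ has a singularity of type $A_2$ at the point, in the sense fixed before Theorem~\ref{Thm:A}, and the change of variables being polynomial with unit denominator, no passage to the completion is needed. Finally, an $A_2$-hypersurface singularity $z^3 + xy$ is resolved by two successive blowups of its isolated singular point, which yields the resolution claim.
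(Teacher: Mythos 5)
Your proof is correct and follows essentially the same route as the paper: the same acyclic/lower-bound presentation and elimination of one cluster variable to get the hypersurface $xyz - x^3 - y - 1$ (the paper's $z^3 - xyz + y + 1$ up to relabeling and sign), and the same Jacobian computation isolating the unique singular point $(-1,0,-1)$ when $\car(K)=3$ via $(x+1)^3 = x^3+1$. Your final substitution $b = x + z - xz$ is exactly the paper's local coordinate change $x' = \frac{\tilde{x}-z'}{1-z'}$ written inversely, producing the same normal form $z^3 + xy$ of type $A_2$.
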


\begin{proof}
	Since $ \Gamma(B) $ is acyclic, 
	the cluster algebra $\mathcal{A}(G_2)$ is isomorphic to its lower bound cluster algebra by
	Theorem~\ref{Thm:clus=low}.  
	We get the presentation 
	\[ 
	\cA(G_2) \cong K[x_1,x_2,y_1,y_2]/\langle x_1y_1-1-x_2^3,x_2y_2-1-x_1\rangle \ .
	\] 
	Since $x_1$ can be expressed in term of the other variables, we find
	\[ 
	\cA(G_2)\cong K[x,y,z]/\langle z^3-xyz+y+1\rangle \ . 
	\]
	By the Jacobian criterion, this algebra is {regular} for $\mathrm{char}(K) \neq 3$, and for characteristic $3$ the Jacobian criterion yields 
	$\Sing(\cA(G_2)) \cong 
	V(x+1,y,z+1) $, 
	a closed point.
	\\
	Assume $\mathrm{char}(K)=3$ and set $x':=x+1$, and $z':=z+1$. 
	Then, we obtain $\mathcal{A}(G_2)\cong K[x',y,z']/\langle z'^3-x'yz'+yz'+x'y\rangle$, where the singular point is the origin in the new coordinates. Applying the local coordinate change $x'=\frac{\tilde x -z'}{1-z'}$ yields 
	\[ 
	\cA(G_2)_{\langle \tilde x, y, z' \rangle} \cong K[\tilde x, y, z']_{\langle \tilde x, y, z' \rangle} /\langle z'^3+\tilde x y \rangle
	\ ,
	\] 
	which is a singularity of type $ A_2 $,	
	see \cite[Definition~1.2]{GreuelKroening}.
\end{proof}

\bigskip

\section{Star cluster algebras}
\label{sec:star}

The final section is devoted to the question how singularities of cluster algebras may look 
for more general quivers. 
We examine the cluster algebra of a star shaped quiver.
Consider the \emph{star quiver }$\St_n$ with $n$ vertices and {the following} orientation: 
\begin{center} 
	\begin{tikzpicture}[->,>=stealth',shorten >=1pt,auto,node distance=2cm, thick,main node/.style={circle,draw,white}]

\node[main node] (n) {1 1};
\node[main node] (1) [above right of=n] {1 1};
\node[main node] (2) [right of=n] {1 1};
\node[main node] (3) [below right of=n] {1 1};
\node[main node] (4) [below of=n] {1 1};
\node[main node] (n-2) [above left of=n] {1 1};
\node[main node] (n-1) [above  of=n] {1 1};

\node at (n) {\small $n$};
\node at (1) [above right of=n] {\small $ 1 $};
\node at (2) [right of=n] {\small $2$};
\node at (3) [below right of=n] {\small $3$};
\node at (4) [below of=n] {\small $4$};
\node at (n-2) [above left of=n] {{\small $n-2$}};
\node at (n-1) [above  of=n] {{\small $n-1$}};

\draw (n) circle (13.5pt);
\draw (1) circle (13.5pt);
\draw (2) circle (13.5pt);
\draw (3) circle (13.5pt);
\draw (4) circle (13.5pt);
\draw (n-2) circle (13.5pt);
\draw (n-1) circle (13.5pt);

\path

(3) edge (n)
(1) edge (n)
(2) edge (n)
(4) edge (n)
(n-2) edge (n)
(n) edge (n-1);

\path[dashed]
(4) edge[bend left=50,-] (n-2);

\end{tikzpicture}
\end{center} 
By Lemma~\ref{Lem:tree_equiv}, 
all orientations on this tree are equivalent. 
Since the quiver is acyclic (and all vertices are mutable),
the corresponding cluster algebra,
denoted by $ \cA(\St_n) $,
is isomorphic to its lower bound cluster algebra by Theorem~\ref{Thm:clus=low}. 

\begin{Bem}
	\label{Rk:Sn_known} 
Observe that $ \St_2 = A_2 $, $ \St_3 = A_3 $, and $ \St_4 =D _4 $. 
By Corollary~\ref{Cor:An},
$ \Spec (\cA(\St_2)) $ is {regular},
while $ \Spec(\cA(\St_3)) $ is an $ A_1 $-hypersurface singularity
which is resolved by blowing up the singular locus.
Moreover, by Lemma~\ref{Lem:SingDn} and Proposition~\ref{Prop:TypeRSDn}, 
the singular locus of $ \Spec (\cA(\St_4))  $ has six irreducible components of dimension one and we obtain a desingularization of  $ \Spec (\cA(\St_4))  $ by first blowing up the intersection of these irreducible components, followed by the blowup of their strict transforms.

In particular, if suffices to restrict to the case $ n \geq 5 $ in the following, if needed. 
\end{Bem}  

\begin{lemma}
	\label{L:Sn} 
	The cluster algebra $\mc{A}(\St_n)$ has a presentation of the form
	\[ 
		K[z_1,\ldots,z_{2n-2}]
		/ \langle h_1, h_2, \ldots, h_{n-2} \rangle 
		\ ,
	\]
	where
	\[ 
		h_1 := z_1 z_2 (1  - z_{2n-3} z_{2n-2}  + \prod_{\ell=1}^{n-1} z_{2\ell-1} )
		+ z_{2n-3} z_{2n-2}  \ ,
	\]
	\[
		h_k := z_1 z_2 - z_{2k-1} z_{2k} 
		\  , \ \ \ \mbox{ for } k \in \{ 2, \ldots, n-2 \} \ .
	\]
	In particular, the $ n $-dimensional variety 
	$\Spec(\mc{A}(\St_n)) $ can be embedded into $ \A^{2n-2}_K $.
\end{lemma}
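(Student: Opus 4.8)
The plan is to exploit that the star quiver $\St_n$ is acyclic and has all vertices mutable, so by Theorem~\ref{Thm:clus=low} the cluster algebra $\cA(\St_n)$ equals its lower bound $\cL(\St_n)$, and by Lemma~\ref{L:BFZ_Cor1.17} the ideal of relations among the $2n$ generators $x_1,\ldots,x_n,x_1',\ldots,x_n'$ is generated by the $n$ exchange relations. Fixing the orientation in which the $n-2$ outer vertices $1,\ldots,n-2$ are sources pointing at the centre $n$, while $n$ points at the sink $n-1$, these relations read
\[
x_i x_i' = 1 + x_n \quad (i = 1,\ldots,n-1), \qquad x_n x_n' = x_1 \cdots x_{n-2} + x_{n-1}.
\]
The first thing I would record is that \emph{every} leaf relation has the same right-hand side $1+x_n$; this is the structural feature that drives the whole computation.

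Next I would eliminate two of the generators. Solving the relation at vertex $1$ gives the polynomial expression $x_n = x_1 x_1' - 1$, so $x_n$ may be discarded; substituting it into the remaining leaf relations turns each into the equal-product relation $x_i x_i' = x_1 x_1'$ for $i = 2,\ldots,n-1$. Relabelling the surviving pairs by $(z_{2i-1},z_{2i}) := (x_i,x_i')$ for $i = 1,\ldots,n-2$, the relations coming from the sources $2,\ldots,n-2$ become exactly $h_k = z_1 z_2 - z_{2k-1}z_{2k}$ for $k = 2,\ldots,n-2$.

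It then remains to account for the sink $n-1$ together with the centre. Here the key point is that $x_{n-1}$ occurs linearly and with unit coefficient in the centre relation, so after the previous substitution I can solve $x_{n-1} = (x_1 x_1' - 1)x_n' - x_1 \cdots x_{n-2}$ and eliminate $x_{n-1}$ as well. Feeding this into the one leaf relation not yet used, $x_{n-1}x_{n-1}' = x_1 x_1'$, produces a single polynomial in $x_1,\ldots,x_{n-2},x_1',\ldots,x_{n-1}',x_n'$; naming the two leftover primed generators $z_{2n-3},z_{2n-2}$, a direct expansion identifies it with $h_1$. This leaves $2n-2$ generators subject to the $n-2$ relations $h_1,\ldots,h_{n-2}$, and since $\St_n$ is acyclic $\Spec(\cA(\St_n))$ has dimension $n$ (the remark following Lemma~\ref{L:BFZ_Cor1.17}), giving the claimed embedding into $\A^{2n-2}_K$.

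The main obstacle I anticipate is the bookkeeping in this last step: among the $n-1$ leaves, only the sink $n-1$ interacts non-trivially with the centre relation, and it is precisely the substitution of $x_{n-1}$ into its own leaf relation that creates the non-binomial generator $h_1$. I would need to expand the resulting product carefully, keeping track of signs and of which leaf serves as the reference pair $(z_1,z_2)$, and to check at each stage that $x_n$ and $x_{n-1}$ really do appear linearly with constant leading coefficient, so that the successive quotients are genuinely isomorphic rather than mere specializations. That the exchange relations already generate the full relation ideal (Lemma~\ref{L:BFZ_Cor1.17}) is what guarantees no further relations are hidden, so once $h_1,\ldots,h_{n-2}$ are assembled the presentation is complete.
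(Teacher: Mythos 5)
Your overall route is exactly the paper's: invoke Theorem~\ref{Thm:clus=low} and Lemma~\ref{L:BFZ_Cor1.17} to reduce to the exchange relations, eliminate $x_n$ via the relation at the leaf $1$, eliminate $x_{n-1}$ via the centre relation, and let the unused leaf relation at $n-1$ produce the non-binomial generator. Up to that point everything you write is correct. But the final step --- the one you yourself flag as delicate --- fails as stated. If you literally name the two leftover primed generators $z_{2n-3}:=x_{n-1}'$, $z_{2n-2}:=x_n'$, then substituting $x_{n-1}=(x_1x_1'-1)x_n'-x_1\cdots x_{n-2}$ into $x_1x_1'-x_{n-1}x_{n-1}'$ and expanding gives
\[
z_1z_2\bigl(1-z_{2n-3}z_{2n-2}\bigr)+z_{2n-3}z_{2n-2}+\prod_{\ell=1}^{n-1}z_{2\ell-1}\ ,
\]
which is \emph{not} $h_1$: in $h_1$ the monomial $\prod_{\ell=1}^{n-1}z_{2\ell-1}$ occurs multiplied by the extra factor $z_1z_2$, so the two polynomials differ by $(z_1z_2-1)\prod_{\ell=1}^{n-1}z_{2\ell-1}$. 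Hence ``a direct expansion identifies it with $h_1$'' is false for this choice of coordinates, and swapping which primed variable is called $z_{2n-3}$ versus $z_{2n-2}$ does not repair it.

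The missing ingredient is a further shear change of variables, and it is precisely what the paper does: keep $z_{2n-3}:=x_{n-1}'$ but set $z_{2n-2}:=x_n'+x_1\cdots x_{n-2}$. This is an automorphism of the ambient polynomial ring and is harmless for the other generators, since $z_{2n-2}$ appears in none of $h_2,\ldots,h_{n-2}$. With it one has $x_{n-1}'\bigl(x_n'+x_1\cdots x_{n-2}\bigr)=z_{2n-3}z_{2n-2}$ and
\[
1-x_{n-1}'x_n'
=1-z_{2n-3}\Bigl(z_{2n-2}-\prod_{\ell=1}^{n-2}z_{2\ell-1}\Bigr)
=1-z_{2n-3}z_{2n-2}+\prod_{\ell=1}^{n-1}z_{2\ell-1}\ ,
\]
so the remaining relation becomes exactly $h_1$. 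Alternatively, you could weaken your final claim to ``the presentation obtained is isomorphic to the stated one via the automorphism $z_{2n-2}\mapsto z_{2n-2}+\prod_{\ell=1}^{n-2}z_{2\ell-1}$.'' Either way, this one extra observation closes the gap, after which your argument coincides with the paper's proof.
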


\begin{proof}
	Since $ \cA(\St_n) $ is isomorphic to its lower bound cluster algebra,
	we have
	\[
		\cA(\St_n) \cong K[x_1, \ldots, x_n, y_1, \ldots, y_n]/  I \ ,
	\]
	where $ I $ is the ideal generated by the exchange relations
	\[
	\begin{array}{llll} 
	x_1 y_1 - x_n  - 1 
	\ , 
	&
	\ldots
	\ , 
	&
	x_{n-1} y_{n-1} - x_n - 1
	\ , 
	&
	x_n y_n - x_1 \cdots x_{n-2} - x_{n-1}
	\ .
	\end{array} 
	\] 
	The first generator allows us to substitute $ x_{n} = x_1 y_1 - 1 $
	and we get
	\[
	\begin{array}{lllll} 
	x_1 y_1 - x_{2} y_{2} 
	\ , 
	&
	\ldots
	\ , 
	&
	x_1 y_1 - x_{n-1} y_{n-1}
	\ , 
	&
	y_n (x_1 y_1 - 1) - x_1 \cdots x_{n-2} - x_{n-1}
	\ .
	\end{array}	
	\] 
	We can eliminate another generator via $ x_{n-1} = y_n (x_1 y_1 - 1) - x_1 \cdots x_{n-2} $ , which provides
	\[
	\begin{array}{rcl} 
	h_1 := x_1 y_1 - x_{n-1} y_{n-1} 
	& 
	= 
	& 
	x_1 y_1 - y_{n-1} (y_n (x_1 y_1 - 1) - x_1 \cdots x_{n-2})
	\\
	& 
	= 
	& 
	x_1 y_1 (1 - y_{n-1} y_n) 
	+ y_{n-1} (y_n +  x_1 \cdots x_{n-2} ) 
	\ , 
	\end{array}	
	\] 
	while $ h_k :=  x_1 y_1 - x_k y_k $, for $ k \in \{ 2, \ldots, n- 2 \} $, are unchanged.
	Introducing
	$ z_{2n-3} := y_{n-1} $,
	$ z_{2n-2} := y_n + x_1 \cdots x_{n-2} $,
	and  
	$ z_{2k-1} := x_k $, $ z_{2k} :=  y_k $, for $ k \in \{ 1, \ldots, n-2\} $,
	provides the assertion.
\end{proof}

\begin{Thm}
	\label{Thm:Sn}
		Let $ n \geq 4 $.
		Let $ \cA(\St_n) $ be the cluster algebra arising form the star shaped quiver $ \St_n $ over a field $ K $. 
		Using the notation of Lemma~\ref{L:Sn}, 
		we have 
		\[ 
			\Sing ( \Spec(\mc{A}(\St_n)))  
			\cong
			\bigcup_{k = 1 }^{n-1} 
			\bigcup_{\ell = k+1}^{n-1} D_{k,\ell} 			
			\subseteq \A_K^{2n-2} 
			\ ,
		\]
		\[
			\mbox{for } 
			D_{k,\ell} := V ( z_{2k-1}, \, z_{2k}, \, z_{2\ell -1}, \, z_{2\ell}, \,
			z_{2m-1} \cdot z_{2m} \mid m \in \{ 1, \ldots, n-1 \} \setminus \{ k , \ell \} )
			\ . 
		\]
		
		In particular, the singular locus consists of $ \binom{n-1}{2} 2^{n-3} = (n-1)(n-2)2^{n-4} $ irreducible components, 
		where each of them is {regular} and of dimension $ n - 3 $.
		Furthermore, locally at a generic point of such a component, $ \Spec (\cA(\St_n) ) $ is isomorphic to an $ A_1 $-hypersurface singularity.
		On the other hand, locally at the closed point determined by the intersection of all irreducible components, 
			$ \change{\Spec (\cA(\St_n) )} $ is isomorphic to a toric variety,
			defined by the binomial ideal
			\[
			\langle x_1 x_2 - x_{2k-1} x_{2k} \mid k \in \{ 2, \ldots, n - 1 \} \rangle \subset K [x_1, \ldots, x_{2n-2}]_{ \langle x_1, \ldots, x_{2n-2} \rangle }.
		\]
		We have that the intersection of all irreducible components is the origin of $ \A_K^{2n-2} $ and after blowing up the latter, 
		we obtain in each chart a singularity which is of the same kind as the one of $ \Spec(\cA(\St_{n-1})) \subset \A_K^{2n-4} $.
		In other words, the singularities of $ \Spec(\cA(\St_n)) $ are resolved by first separating the irreducible components of its singular locus and then blowing up their strict transforms.  
\end{Thm}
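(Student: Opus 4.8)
The plan is to work throughout with the explicit presentation $K[z_1,\ldots,z_{2n-2}]/\langle h_1,\ldots,h_{n-2}\rangle$ of Lemma~\ref{L:Sn} and to apply the Jacobian criterion. Since $h_2,\ldots,h_{n-2}$ have the leading monomials $z_{2k-1}z_{2k}$ sitting in disjoint variable pairs, the ideal is a complete intersection and $\Spec(\cA(\St_n))$ is $n$-dimensional; a point is therefore singular exactly when all $(n-2)\times(n-2)$ minors of the Jacobian of $(h_1,\ldots,h_{n-2})$ vanish. First I would note that $h_2,\ldots,h_{n-2}$ force a common value $w:=z_1z_2=z_3z_4=\cdots=z_{2n-5}z_{2n-4}$, and that on the open set $w\neq0$ every pair $\mathsf{p}_m:=(z_{2m-1},z_{2m})$ with $m\le n-2$ has both coordinates invertible. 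There the rows $h_2,\ldots,h_{n-2}$ produce a diagonal block of rank $n-3$ in the columns $z_4,z_6,\ldots,z_{2n-4}$ (these columns are private to their rows and absent from $h_1$), while one of $\partial h_1/\partial z_{2n-3}$, $\partial h_1/\partial z_{2n-2}$ is a unit, supplying the missing independent row. This confines $\Sing$ to the stratum $\{w=0\}$, on which $h_1$ degenerates to $z_{2n-3}z_{2n-2}$, so that all products $z_{2m-1}z_{2m}$, $m=1,\ldots,n-1$, vanish.

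On this stratum the decisive bookkeeping is to count how many pairs $\mathsf{p}_m$ vanish completely. For $m\in\{2,\ldots,n-2\}$ the row $h_m$ carries a nonzero entry in one of its private columns $z_{2m-1},z_{2m}$ precisely when $\mathsf{p}_m$ is not wholly zero, whereas the rows of all wholly-zero pairs collapse onto the single common covector $(z_2,z_1)$ supported in columns $z_1,z_2$. A short computation, using that $\prod_{\ell=1}^{n-1}z_{2\ell-1}$ and $z_{2n-3}z_{2n-2}$ both vanish on the stratum, shows that $h_1$ likewise collapses onto this common covector as soon as the special pair $\mathsf{p}_{n-1}$ is wholly zero. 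Putting these together I would prove that the Jacobian has rank $n-2$ when at most one pair vanishes completely, and rank $\le n-3$ as soon as two pairs do; this identifies $\Sing(\Spec(\cA(\St_n)))$ with the locus where at least two of $\mathsf{p}_1,\ldots,\mathsf{p}_{n-1}$ vanish and all products are zero, that is with $\bigcup_{k<\ell}D_{k,\ell}$. Decomposing each $D_{k,\ell}$ by choosing, in each of the remaining $n-3$ pairs, which coordinate vanishes exhibits it as a union of $2^{n-3}$ coordinate subspaces, each regular of codimension $n+1$, hence of dimension $n-3$; the set of wholly-zero pairs is an invariant of each subspace, so the subspaces are pairwise distinct and maximal, yielding exactly $\binom{n-1}{2}2^{n-3}=(n-1)(n-2)2^{n-4}$ irreducible components.

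To read off the local type I would, at a generic point of one component, localize and use the invertible coordinates to eliminate variables: each $h_m$ whose pair carries a unit solves for the complementary (vanishing) coordinate, and $h_1$ then eliminates the vanishing coordinate of the special pair; after $n-3$ such eliminations a single relation of the shape $z_{2k-1}z_{2k}-z_{2\ell-1}z_{2\ell}=0$ survives, a rank-four quadric, i.e. a cylinder over an $A_1$-hypersurface singularity (the cases where $k$ or $\ell$ equals $1$ or $n-1$ are handled by the symmetry among $\mathsf{p}_1,\ldots,\mathsf{p}_{n-2}$ and a direct check for $\mathsf{p}_{n-1}$). At the origin no coordinate is invertible, but $U:=1-z_{2n-3}z_{2n-2}+\prod_{\ell=1}^{n-1}z_{2\ell-1}$ is a unit; after the sign change $z_{2n-2}\mapsto-z_{2n-2}$ and the local rescaling $z_{2n-3}\mapsto U^{-1}z_{2n-3}$, which fixes $h_2,\ldots,h_{n-2}$ because these do not involve $z_{2n-3}$, the relation $h_1$ becomes the binomial $z_1z_2-z_{2n-3}z_{2n-2}$. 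Hence the localization at the origin is isomorphic to $K[x_1,\ldots,x_{2n-2}]_{\mathfrak{m}}/\langle x_1x_2-x_{2k-1}x_{2k}\mid k=2,\ldots,n-1\rangle$, the announced toric binomial variety.

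Finally, for the resolution I would argue by induction on $n$, the cases $n\le4$ being Remark~\ref{Rk:Sn_known}. Blowing up the origin of $\A_K^{2n-2}$ and passing to the chart where $Z_j\neq0$, each degree-two leading form $z_1z_2-z_{2k-1}z_{2k}$ acquires a factor $z_j^2$; dividing it off, the strict transform eliminates one variable and leaves the analogous equal-products relations among $n-2$ pairs, that is a copy of the $\St_{n-1}$ presentation inside $\A_K^{2n-4}$, with the higher-order part of $h_1$ absorbed into a unit exactly as at the origin. Each chart therefore reproduces $\Spec(\cA(\St_{n-1}))$-type singularities, resolved by the inductive hypothesis, and globally this is precisely the procedure of first separating the irreducible components of the singular locus and then blowing up their strict transforms. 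I expect the main obstacle to be the Jacobian rank analysis of the second paragraph: controlling the collapse of the wholly-zero rows onto the common covector, and in particular the interaction of the special pair $\mathsf{p}_{n-1}$ with $h_1$, is where the argument is most delicate and where the sharp cut-off ``two vanishing pairs'' must be pinned down cleanly; confirming that the higher-order terms of $h_1$ do not disturb the chart-by-chart reduction is the secondary difficulty.
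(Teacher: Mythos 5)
Your proposal is correct, but it reaches the singular locus by a genuinely different route than the paper. You compute $\Sing(\Spec(\cA(\St_n)))$ directly with a Jacobian rank analysis: the relations confine the singular locus to the stratum where every product $z_{2m-1}z_{2m}$ vanishes, and there each row of the Jacobian either carries a nonzero entry in its private columns (pair not wholly zero) or collapses onto the common covector $(z_2,z_1,0,\ldots,0)$ (pair wholly zero --- for the row $h_1$ this uses that the factor $1-z_{2n-3}z_{2n-2}+\prod_{\ell}z_{2\ell-1}$ equals $1$ once $z_{2n-3}=0$, and that $z_1z_2=0$ on the stratum), so the rank drops below $n-2$ exactly when at least two pairs vanish wholly; this gives $\Sing=\bigcup_{k<\ell}D_{k,\ell}$ in one pass. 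The paper instead proves the inclusion $\bigcup_{k<\ell}D_{k,\ell}\subseteq\Sing$ via Lemma~\ref{L:auxSing} and gets the reverse inclusion \emph{indirectly}: assuming an extra component $C$, it blows up the origin and shows chart by chart (using that $(h_1,\ldots,h_{n-2})$ is a Gr\"obner basis, so the strict transforms generate the transformed ideal) that $C$ cannot survive, by an induction whose hypothesis is deliberately stated for the wider class $g_1=z_1z_2(1+\rho)+z_{2m-3}z_{2m-2}$, $g_k=z_1z_2-z_{2k-1}z_{2k}$ with extra parameters $u_1,\ldots,u_a$ and $\rho\in\langle z_{2m-3}\rangle$. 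Your route buys a self-contained, purely linear-algebraic determination of the singular locus; the paper's route buys reusability, since that parameterized induction is exactly what then powers the resolution statement. This is also where your sketch still owes a detail: your resolution induction cannot quote $\St_{n-1}$ verbatim, because the chart strict transforms are not the literal $\St_{n-1}$ presentation but carry an extra free variable and a perturbed unit $1+\rho$; you flag this (``higher-order part of $h_1$ absorbed into a unit''), and the clean fix is the paper's --- formulate the rank analysis and the resolution claim for the perturbed class from the outset, which your computations permit since they only use $\rho\in\langle z_{2n-3}\rangle$. Two minor precisions: on $\{z_1z_2\neq 0\}$ the assertion that one of $\partial h_1/\partial z_{2n-3}$, $\partial h_1/\partial z_{2n-2}$ is a unit holds only at points of the variety (one needs $h_1=0$ to exclude $z_{2n-3}=0$, and to force $\prod_{\ell\le n-2}z_{2\ell-1}\neq 0$ when $z_1z_2=1$), which is how you use it; and your local rescaling $z_{2n-3}\mapsto U^{-1}z_{2n-3}$ at the origin is legitimate precisely because $U\in 1+\langle z_{2n-3}\rangle$, so the substitution is rationally invertible in the local ring --- the same move the paper makes with $\epsilon^{-1}z_{2n-2}$.
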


As a preparation for the proof, we show the following lemma,
which we will use to make an induction.

\begin{lemma}	
	\label{L:auxSing} 
	Let $ h := x_1 x_2 (1 + \rho ) + y_1 y_2  \in K[x_1, x_2, y_1, y_2, u_1, \ldots, u_a] $, for $ \rho \in \langle y_1 \rangle $.
	We have:
	\begin{enumerate}[leftmargin=*,label=(\arabic*)]
		\item 
		$ \Sing ( V(h)) \cap V( x_1 x_2 ) = V (x_1, x_2, y_1, y_2 ) $. 
		
		\item 
		If $ \rho = y_1 y_2 \rho_1 +  x_1 y_1 \rho_2  $, for $ \rho_1 , \rho_2 \in K[u_1, \ldots, u_a] $,
		then $ \Sing ( V(h))  = V (x_1, x_2, y_1, y_2 ) $. 
		
	\end{enumerate}
	Moreover, locally at
	$  V (x_1, x_2, y_1, y_2 ) $, 
	the variety $ V(h) $ is a cylinder over an $ A_1 $-hypersurface singularity.
	In particular, blowing up $ V (x_1, x_2, y_1, y_2 )  $ resolves the singularities 
	{of $ V(h) $}.  
\end{lemma}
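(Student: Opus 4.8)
The plan is to apply the Jacobian criterion to $h$, using throughout that $\rho \in \langle y_1\rangle$. First I would record the partial derivatives
\[
\frac{\partial h}{\partial x_1} = x_2(1+\rho), \qquad
\frac{\partial h}{\partial x_2} = x_1(1+\rho), \qquad
\frac{\partial h}{\partial y_1} = x_1 x_2\,\frac{\partial \rho}{\partial y_1} + y_2, \qquad
\frac{\partial h}{\partial y_2} = x_1 x_2\,\frac{\partial \rho}{\partial y_2} + y_1,
\]
together with $\frac{\partial h}{\partial u_i} = x_1 x_2\,\frac{\partial \rho}{\partial u_i}$.

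For part (1), along $V(x_1 x_2)$ the term $x_1 x_2(1+\rho)$ and every factor $x_1 x_2$ in the derivatives vanish, so on $V(h)\cap V(x_1x_2)$ one reads off $\partial_{y_1}h = y_2$ and $\partial_{y_2}h = y_1$; their vanishing forces $y_1 = y_2 = 0$. Once $y_1 = 0$ we get $\rho = 0$, whence $\partial_{x_1}h = x_2$ and $\partial_{x_2}h = x_1$ force $x_1 = x_2 = 0$, while the $u_i$-derivatives vanish automatically. The reverse inclusion $V(x_1,x_2,y_1,y_2)\subseteq \Sing(V(h))$ follows by substituting $x_1=x_2=y_1=y_2=0$ into $h$ and all its derivatives.

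For part (2) it suffices, by part (1), to rule out singular points with $x_1\neq 0$ and $x_2\neq 0$. At such a point $\partial_{x_1}h = 0$ forces $1+\rho = 0$, hence $h = y_1 y_2 = 0$; writing $\rho = y_1(y_2\rho_1 + x_1\rho_2) = -1$ shows $y_1\neq 0$, so $y_2 = 0$, and then $\rho = x_1 y_1\rho_2 = -1$ shows $\rho_2$ does not vanish there. Using $\frac{\partial \rho}{\partial y_1} = y_2\rho_1 + x_1\rho_2 = x_1\rho_2$, I then compute $\partial_{y_1}h = x_1^2 x_2\,\rho_2 \neq 0$, contradicting singularity. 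I expect this contradiction --- feeding the precise shape of $\rho$ into exactly the $y_1$-derivative after the preliminary deductions --- to be the one genuinely delicate step; the point is to eliminate the variables in the correct order ($1+\rho$, then $y_2$, then $\rho_2\neq 0$) so that the final derivative is manifestly a nonzero product.

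Finally, for the local statement near $C := V(x_1,x_2,y_1,y_2)$: since $\rho\in\langle y_1\rangle$ vanishes identically on $C$, the element $1+\rho$ is a unit in a Zariski neighbourhood of $C$, so $\tilde x_1 := x_1(1+\rho)$ is a local coordinate replacing $x_1$ (its derivative in $x_1$ equals $1$ along $C$). In these coordinates $h = \tilde x_1 x_2 + y_1 y_2$, which is the defining equation $z x_1 + x_2 x_3$ of an isolated $A_1$-hypersurface singularity after renaming; since $u_1,\dots,u_a$ do not occur, this exhibits $V(h)$ locally as a cylinder over that $A_1$-singularity with axis $\A^a$. Blowing up $C$ then amounts to blowing up the vertex of this $A_1$-singularity in the four relevant coordinates, and a chart-by-chart check (e.g.\ in the $\tilde x_1$-chart the strict transform becomes $x_2 + y_1 y_2$) shows the strict transform is regular, so $C$ is a resolving center.
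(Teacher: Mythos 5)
Your overall strategy is exactly the paper's: apply the Jacobian criterion, split into the cases $x_1x_2=0$ and $x_1x_2\neq 0$, and for the local statement use that $1+\rho$ is a unit near $V(x_1,x_2,y_1,y_2)$ to rewrite $h$ as $\tilde x_1 x_2 + y_1y_2$. Part (1), the reverse inclusion, and the cylinder/blowup discussion at the end are all correct as written (your chart computation for the blowup is even slightly more explicit than the paper, which stops at the local normal form).

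There is, however, an error in part (2). Your displayed formulas $\partial h/\partial x_1 = x_2(1+\rho)$ and $\partial h/\partial x_2 = x_1(1+\rho)$ silently assume that $\rho$ does not involve $x_1,x_2$; in general $\rho\in\langle y_1\rangle\subset K[x_1,x_2,y_1,y_2,u_1,\ldots,u_a]$ may involve both, and the correct formulas carry the extra terms $x_1x_2\,\partial\rho/\partial x_i$ (the paper records them). This is harmless in part (1), where you work on $V(x_1x_2)$ and those terms vanish, but it breaks the first step of part (2): there $\rho = y_1y_2\rho_1 + x_1y_1\rho_2$ genuinely depends on $x_1$, so
$\partial h/\partial x_1 = x_2(1+\rho) + x_1x_2y_1\rho_2 = x_2\bigl(1+\rho+x_1y_1\rho_2\bigr)$,
and its vanishing (with $x_2\neq 0$) only gives $1+\rho+x_1y_1\rho_2=0$, not $1+\rho=0$. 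The repair is immediate, and it is what the paper does: in case (2) $\rho$ involves no $x_2$, so the formula $\partial h/\partial x_2 = x_1(1+\rho)$ \emph{is} correct, and $x_1\neq 0$ forces $1+\rho=0$. With that one substitution, the rest of your chain --- $h=y_1y_2=0$, then $y_1\neq 0$ (else $\rho=0\neq -1$), so $y_2=0$ and $x_1y_1\rho_2=-1$, and finally $\partial h/\partial y_1 = x_1^2x_2\rho_2\neq 0$, a contradiction --- is precisely the paper's argument and goes through verbatim.
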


\begin{proof} 
	Notice that the inclusion $ V (x_1, x_2, y_1, y_2 ) \subseteq \Sing( V(h)) $ is obvious. 
	It remains to show the equality.
	We have
	\[
	\begin{array}{ll} 
	\dfrac{\partial h}{\partial x_1} 
	= x_2 (1 + \rho ) + x_1 x_2 \dfrac{\partial \rho}{\partial x_1} 
	\ ,
	& 
	\dfrac{\partial h}{\partial x_2}
	= x_1 (1 + \rho ) + x_1 x_2 \dfrac{\partial \rho}{\partial x_2}
	\ , 
	\\[10pt] 
	\dfrac{\partial h}{\partial y_1} 
	= x_1 x_2 \dfrac{\partial \rho}{\partial y_1} + y_2
	
	\ ,
	& 
	\dfrac{\partial h}{\partial y_2}
	= x_1 x_2 \dfrac{\partial \rho}{\partial y_2} + y_1
	\ . 
	\end{array} 
	\] 
	If $ x_1 x_2 = 0 $, then the vanishing of the derivatives with respect to $ y_1 $ and $ y_2 $ implies $ y_1 = y_2 = 0 $.
	Since $ \rho \in \langle y_1 \rangle $,we obtain $ x_1 = x_2 = 0$ from the other two derivatives. 
	Thus, all components of the singular locus 
	with $ x_1 x_2 = 0  $ are contained in $ V(x_1, x_2, y_1, y_2) $. 
	\\
	Consider the case 
	$ \rho = y_1 y_2 \rho_1 +  x_1 y_1 \rho_2  $ and $ x_1 x_2 \neq 0 $.
	Then, we have
	$ \frac{\partial h}{\partial x_2}
	= x_1 (1 + \rho ) $ and
	we must have $ 1 + \rho = 0 $.
	This provides $ y_1 y_2 = 0 $ since $ h = 0 $.
	But $ y_1 = 0 $ is impossible as this would contradict $ 1 + \rho = 0 $.
	We get $ y_2 = 0 $ and $ 0 = 1 + \rho = 1 + x_1 y_1 \rho_2 $.
	The vanishing of $ \frac{\partial h}{\partial y_1} $ leads to the condition
	$
	0 = \frac{\partial \rho}{\partial y_1} = x_1 \rho_2  
	$, 
	which contradicts  $ 1 + x_1 y_1 \rho_2   = 0 $.
	
	Finally, 
	locally at a point of $ V (x_1, x_2, y_1, y_2) $, the element $ 1 + \rho $ is a unit since $ \rho \in \langle y_1 \rangle $ and hence $ V(h) $ is locally isomorphic to the hypersurface singularity $ V (x_1 x_2 + y_1 y_2) \subset \A_K^{4+a} $.
	This implies the remaining statements.
\end{proof} 

\begin{proof}[Proof of Theorem~\ref{Thm:Sn}]
	By Lemma~\ref{L:Sn},
	we have 
	$\mc{A}(\St_n)  
	\cong 
	K[z_1,\ldots,z_{2n-2}]
	/ I $,
	where $ I := \langle h_1, h_2, \ldots, h_{n-2} \rangle $ and
	\[ 
	\begin{array}{l}
	\displaystyle  
		h_1 = z_1 z_2 (1  - z_{2n-3} z_{2n-2}  + \prod_{\ell=1}^{n-1} z_{2\ell-1} )
		+ z_{2n-3} z_{2n-2}  \ ,
		\\[15pt]
		h_k = z_1 z_2 - z_{2k-1} z_{2k} 
		\  , \ \ \ \mbox{ for } k \in \{ 2, \ldots, n-2 \} \ .
	\end{array}
	\]	
	Observe that each generator is of the form as $ h $ in Lemma~\ref{L:auxSing}.
	Furthermore, for every $ \ell \in \{ 2, \ldots, n-2 \} $ fixed,
	we may interchange the role of $ z_1 z_2 $ and $ z_{2\ell-1} z_{2\ell} $ using the relation $ h_\ell = 0 $.
	Hence, Lemma~\ref{L:auxSing} implies that	
	\begin{equation}  
	\label{eq:D_Sn}
		D := \bigcup_{k = 1 }^{n-1} 
		\bigcup_{\substack{\ell = 1\\[2pt] \ell \neq k }}^{n-1} D_{k,\ell} 				
		\subseteq 
		\Sing ( \Spec(\mc{A}(\St_n)))  
		\ .
	\end{equation} 
	It remains to prove that this is an equality.
	Suppose there exists $ C \subset \Sing( \Spec(\mc{A}(\St_n)) ) $ with $ C \nsubseteq D $.
	We deduce a contradiction via an induction on the number of generators $ h_1, \ldots, h_{n-2} $.	 
	If $ n-2 = 2 $, 
	then $ n = 4 $, i.e., $ S_4 = D_4 $ (by Remark~\ref{Rk:Sn_known}).
	By Lemma~\ref{Lem:SingDn}, the singular locus of $ \Spec(\cA(D_4)) $ consists of the six lines determined by $ D_{k,\ell} $.
	\\
	Suppose $ n -2 > 2 $. 
	The {\em induction hypothesis} implies that \eqref{eq:D_Sn} is an equality for any $ K[z_1, \ldots, z_{2m-2}, u_1, \ldots, u_a]/\langle g_1, \ldots, g_{m-2} \rangle $ 
	with $ m-2 < n- 2 $ and 
	\[ 
		\begin{array}{l}
		g_1 = z_1 z_2 (1 + \rho ) + z_{2m-3} z_{2m-2} \ , 
		\\[5pt]
		g_k = z_1 z_2 - z_{2k-1} z_{2k} \ , \ \ \mbox{ for } k \in \{ 2, \ldots, m- 2 \}
		\ ,
		\end{array} 
	\] 
	where $ \rho \in \langle z_{2m-3} \rangle \subset K[z_1, \ldots, z_{2m-2}, u_1, \ldots, u_a] $ and 
	where we have $ z_1 z_2 = 0 $ or $ \rho $ is of the form as in $(ii)$ of Lemma~\ref{L:auxSing}.
	\\
	We blow up the origin, which is the intersection of all irreducible components in $ D $. 
	Since $ C \nsubseteq D $, the strict transform $ C ' $ of $ C $ must appear in one of the charts.  
	Since $ ( h_1, \ldots, h_{n-2}) $ is a {Gr\"obner} basis of the ideal $ I $,
	the strict transform of $ I $ is generated by their strict transforms $ h_1', \ldots, h_{n-2}'  $. 
	We go through the different charts of the blowup.
	
	\noindent
	\underline{$ Z_{2k-1} $-chart, $ k \in \{ 2, \ldots, n- 2\} $.}
	Without loss of generality, we assume $ k = 2 $. 
	We have $ z_i = z_3 z_i $ for every $ i \neq 3 $. 
	(By abuse of notation, we denote the variables after the blowup by the same letter.)
	Hence, we get
	\[ 
	\begin{array}{l}
	\displaystyle  
		h_1' 
		= z_1 z_2 (1  - z_3^2 z_{2n-3} z_{2n-2}   + z_3^{n-2} \prod_{\ell=1}^{n-1} z_{2\ell-1} )
	+ z_{2n-3} z_{2n-2}  \ ,
	\\
		h_2' = z_1 z_2 - z_4 \ , 
	\\[5pt]
		h_k' = z_1 z_2 - z_{2k-1} z_{2k} 
		\  , \ \ \ \mbox{ for } k \in \{ 3, \ldots, n-2 \} \ .
	\end{array} 
	\]	
	Since $ z_4 $ appears only in $ h_2' $, we can eliminate it and forget the generator $ h_2' $ without changing the other $ h_k' $, $ k \geq 3 $.
	Notice that $ h_1' $ is of the form as $ h $ in Lemma~\ref{L:auxSing}
	and
	thus, we can apply the induction hypothesis for 
	\[
		K[z_1, z_2, z_5, z_6, \ldots, z_{2n-2}, u_1 ]/ \langle h_1', h_3', \ldots, h_{n-2}' \rangle 
		\ ,
	\]
	where $ u_1 := z_3 $. 
	Therefore, the corresponding singular locus is equal to the strict transform of $ D $.
	In particular, $ C' $ has to be empty in this chart.

	\smallskip

	\noindent
	\underline{$ Z_{2k} $-chart, $ k \in \{ 2, \ldots, n-2\} $.}	
	Without loss of generality, we choose $ k = 2 $. 
	We get $ z_i = z_4 z_i $ for every $ i \neq 3 $. 	
	The strict transforms of $ h_1, \ldots, h_{n-2} $ are
	\[ 
	\begin{array}{l}
	\displaystyle  
		h_1' 
		= z_1 z_2 (1  - z_4^2 z_{2n-3} z_{2n-2}   + z_4^{n-1} \prod_{\ell=1}^{n-1} z_{2\ell-1} )
		+ z_{2n-3} z_{2n-2}  \ ,
		\\
		h_2' = z_1 z_2 - z_3 \ , 
		\\[5pt]
		h_k' = z_1 z_2 - z_{2k-1} z_{2k} 
		\  , \ \ \ \mbox{ for } k \in \{ 3, \ldots, n-2 \} \ .
	\end{array} 
	\]		
	We eliminate $ h_2' $ by replacing $ z_3 = z_1 z_2 $. 
	Observe that this changes $ h_1' $ as $ z_3 $ appears in the product. 
	Since we have already treated the $ Z_3 $-chart,
	it is sufficient to consider only those points of the $ Z_4 $-chart, which are not contained in the $ Z_3 $-chart. 
	Therefore, the singular points, which we have to determine here, fulfill the extra condition $ z_3 = 0 $.
	(Using the precise distinction of the variables before and after the blowup \change{as discussed in the proof of Proposition~\ref{Prop:Cn_p2}},
	we have $ z_3 = z_4 z_3' $, where $ z_3' := \frac{Z_3}{Z_4} $,
	and hence, we avoid the chart $ Z_3 \neq 0 $ by setting $ Z_3 = 0 $, which leads to $ z_3' = 0 $).
	The relation $ z_3 = z_1 z_2 $ implies that we must have $ z_1 z_2 = 0 $, which is {Lemma~\ref{L:auxSing}(1)}. 
	Therefore, we can apply the induction hypothesis and obtain $ C' = \varnothing $ in the $ Z_4 $-chart.

	\smallskip

	\noindent
	\underline{$ Z_1 $-chart.} 	
	Here, $ z_i = z_1 z_i $ for every $ i \neq 1 $ and we obtain
	\[ 
	\begin{array}{l}
		\displaystyle  
		h_1' 
		= z_2 (1  - z_1^2 z_{2n-3} z_{2n-2}   + z_1^{n-2} \prod_{\ell=1}^{n-1} z_{2\ell-1} )
		+ z_{2n-3} z_{2n-2}  \ ,
		\\
		h_2' = z_2 - z_3 z_4 \ , 
		\\[5pt]
		h_k' = z_2 - z_{2k-1} z_{2k} 
		\  , \ \ \ \mbox{ for } k \in \{ 3, \ldots, n-2 \} \ .
		\end{array} 
	\]			
	We replace $ z_2 = z_3 z_4 $ and drop $ h_2 ' $ in the list of generators.
	This provides
	\[ 
	\begin{array}{l}
	\displaystyle  
	h_1' 
	= z_3 z_4 (1  - z_1^2 z_{2n-3} z_{2n-2}   + z_1^{n-2} \prod_{\ell=1}^{n-1} z_{2\ell-1} )
	+ z_{2n-3} z_{2n-2}  \ ,
	\\
	h_k' = z_3 z_4 - z_{2k-1} z_{2k} 
	\  , \ \ \ \mbox{ for } k \in \{ 3, \ldots, n-2 \} \ .
	\end{array} 
	\]			
	Again, we can apply the induction hypothesis using $ u_1 := z_1 $
	and the strict transform of $ C $ is empty in this chart.  
	
	Combining the arguments of the $ Z_{1} $- and the $ Z_4 $-charts shows that $ C' = \varnothing $ in the $ Z_2 $-chart.

	\smallskip

	\noindent
	\underline{$ Z_{2n-3} $-chart.} 	
	We have $ z_i = z_{2n-3} z_i $ for every $ i \neq 2n-3 $.
	The strict transforms of $ h_1, \ldots, h_{n-2} $ are 
	\[ 
	\begin{array}{l}
	\displaystyle  
	h_1' 
	=  z_1 z_2 (1  - z_{2n-3}^2 z_{2n-2}  + z_{2n-3}^{n-1} \prod_{\ell=1}^{n-1} z_{2\ell-1} )
	+ z_{2n-2}  \ ,
	\\
	h_k' = z_1 z_2 - z_{2k-1} z_{2k} 
	\  , \ \ \ \mbox{ for } k \in \{ 2, \ldots, n-2 \} \ .
	\end{array} 
	\]
	Since we already handled the $ Z_i $-charts for $ i \in \{ 1, \ldots, 2n-4 \} $,
	we only have to take those points into account which are not contained in these charts. 
	Therefore, 
	analogous to the $ Z_4 $-chart, it suffices if we determine only those singular points, for which we have
	additionally $ z_1 = \ldots = z_{2n-4} = 0 $.
	
	We may rewrite $ h_1' $ as 
	\[ 
		z_{2n-2} ( 1 - z_1 z_2 z_{2n-3}^2 ) + z_1 z_2 (1  + z_{2n-3}^{n-1} \prod_{\ell=1}^{n-1} z_{2\ell-1} )
	\]
	and, by the previous, $ 1 - z_1 z_2 z_{2n-3}^2 $ is a unit.
	This implies that we may eliminate $ z_{2n-2} $ and forget $ h_1' $.
	The resulting ideal is binomial.
	In particular, we can apply the induction hypothesis with $ u_1 := z_{2n-3} $ 
	(as $ \rho = 0 $ is a possible choice)
	and we get $ C' = \varnothing $ in the present chart.

	The analogous arguments can be applied for the remaining $ Z_{2n-2} $-chart.
	This concludes the proof that \eqref{eq:D_Sn} is an equality.
	
	The results on the desingularization, 
	on the type of the singularity at a generic point of an irreducible component of $ \Sing ( \Spec(\mc{A}(\St_n))) $,
	and on the local description of $ \change{\Spec(\mc{A}(\St_n))} $ at the origin
	follow:
	In each of the charts above, we blow up the intersection of the irreducible components of strict transform $ D $ and continue this process until we eventually reach the case,
	where there is only one irreducible component left. 
	Since we eliminate after every blowup one generator,
	the strict transform of the {variety} is isomorphic to a hypersurface as in Lemma~\ref{L:auxSing} in every chart.
	In particular, we get a hypersurface singularity of type $ A_1 $ and 
	all singularities are resolved after the next blowup. 	
	\\
	Finally, after localizing at $ \langle z_1, \ldots, z_{2n-2} \rangle $,  the factor in parentheses of $ h_1 $ becomes a unit,
		which we abbreviate as $ \epsilon $.
		Therefore, we may introduce $ x_{2n-2} :=  \epsilon^{-1} z_{2n-2} $ 
		and the ideal generated by $ h_k $, 
		for $ k \in \{ 1, \ldots, n-2 \} $, 
		is binomial.
\end{proof}

\begin{Bem} \label{R:Starcomp}
	As we have seen and using Remark~\ref{Rk:Sn_known} for $ n \in \{ 2,3 \} $, the number of irreducible components in the singular locus of $ \Spec(\cA(\St_n)) $ is $ s(n) := (n-1)(n-2)2^{n-4} $, for $ n \in \{ 2, 3, 4,5, \ldots \}  $;
	more concretely, $ s(2) = 0, s(3) = 1, s(4)=6, s(5)=24, s(6)=80 $, \ldots .
	This integer sequence appears in
	{\em The On-Line Encyclopedia of Integer Sequences}, 
	\cite[Sequence A001788]{OEIS}.
	There, the sequence is $ a(n) : =n(n+1)2^{n-2} = s(n+2)$,
	for $ n \in \ZZ_{\geq 0} $.
	\\
	One of the provided descriptions is the following: 
	Let $ X $ be a set with $ 2n $ elements and let $ X_1, \ldots, X_n $ be a partition of $ X $ into $ 2 $-blocks.
	For $ n > 1 $, the number $ a(n-1) $ coincides with the number subsets of $ X $ with $ n +2 $ elements
	and which intersect every $ X_i $ for $ i \in \{ 1, \ldots, n \} $. 
	This precisely describes how we obtained the components of the singular locus.
	Let us briefly explain this (for the case $ \St_{n+1} $):
	\begin{itemize}
		\item 
		The set $ X := \{ z_1, \ldots, z_{2n} \} $ is the set of variables.
		
		\item 
		The partition in $ 2 $-blocks is determined by the monomials appearing in $ h_1, \ldots, h_{n-1} $,
		namely, the blocks are $ X_i := \{ z_{2i-1}, z_{2i} \} $, for $ i \in \{ 1, \ldots, n \} $.
		
		\item 
		A subset with $ n+2 $ elements determines a {regular} $ (n-2) $-dimensional subvariety $ C \subset  \A_K^{2n} $.
		The condition that every $ X_i $ has to be intersected ensures that we have 
		$ C \subset \Spec(\cA(\St_{n+1})) $.
		Furthermore, the number of elements $ n + 2 $ provides that there is a generator which is singular at $ C $ and 
		that 
		$ C \subset \Sing(\Spec(\cA(\St_{n+1}))  $ is an irreducible component.
 	\end{itemize}
\end{Bem}

\bigskip
\newcommand{\etalchar}[1]{$^{#1}$}
\def\cprime{$'$}

\end{document}